\newtheorem{claim}{Claim}
\newtheorem{theorem}{Theorem}
\newtheorem{definition}{Definition}
\newtheorem{obs}{Observation}
\newtheorem{lemma}{Lemma}
\newtheorem{remark}{Remark}
\newtheorem{cor}{Corollary}
\newtheorem{example}{Example}
\newcommand{\M}{{\mathcal{M}}}
\newcommand{\N}{{\mathbb{N}}}
\newcommand{\Ss}{{\mathcal{S}}}
\newcommand{\DR}{{\hbox{DR}}}
\begin{document}

\title{Internal Closedness and von Neumann-Morgenstern Stability \\  in Matching Theory: Structures and Complexity
}

\author{Yuri Faenza\thanks{IEOR Department, Columbia University. Supported by the NSF Award \emph{CAREER: An algorithmic theory of matching markets}. Part of the research leading to this paper was carried out when Yuri Faenza was a visiting scientist in the \emph{Online and Matching-Based Market Design} at Simons Institute in Fall 2019.}, Clifford Stein\thanks{IEOR Department, Columbia University.}, and Jia Wan\thanks{University of Oxford. Part of the research leading to this paper was carried out while Jia Wan was a student at Columbia University.}}
\maketitle
\begin{abstract}

Let $G$ be a graph and suppose we are given, for each $v \in V(G)$, a strict ordering of the neighbors of $v$. A set of matchings ${\cal M}$ of $G$ is called \emph{internally stable} if there are no matchings $M,M' \in {\cal M}$ such that an edge of $M$ blocks $M'$. The sets of \emph{stable} (\`a la Gale and Shapley) matchings and of \emph{von Neumann-Morgenstern stable} matchings are examples of internally stable sets of matching. In this paper, we study, in both the marriage and the roommate case, inclusionwise maximal internally stable sets of matchings. We call those sets \emph{internally closed.} By building on known and newly developed algebraic structures associated to sets of matchings, we investigate the complexity of deciding if a set of matchings is internally closed or von Neumann-Morgenstern stable, and of finding sets with those properties.
\end{abstract}

\section{Introduction}\label{sec:intro}

An instance of the \emph{marriage model} (or a \emph{marriage instance}) is given by a pair $(G,>)$, where $G=(V,E)$ is a bipartite graph and $>$ denotes the set  $\{>_x\}_{x \in V}$, where $>_x$ is a strict ordering of nodes -- also called \emph{agents} -- adjacent to $x$ in $G$. The model was introduced by Gale and Shapley~\cite{gale1962college}, that concurrently also defined the concept of stability. An edge $ab \in E$ is a \emph{blocking pair} for a matching $M$ of $G$ if $b>_a M(a)$ and $a >_b M(b)$ (here and throughout the paper, we let $M(x)$ be the agent $x$ is matched to in matching $M$, and $\emptyset$ if such agent does not exist). A matching is \emph{stable} if it has no blocking pairs. 

Stable matchings enjoy a rich mathematical structure, that has been leveraged on to design various algorithms since Gale and Shapley's publication (see, e.g.,~\cite{Irving,manlove2013algorithmics}). The tractability of stable matchings has boosted the number of applications of the work by Gale and Shapley, that nowadays is used to assign students to schools, doctors to hospitals, workers to firms, partners in online dating, agents in ride-sharing platforms, and more, see, e.g.,~\cite{abdulkadirouglu2003school,hitsch2010matching,roth1984stability,wang2018stable}.   

Although immensely popular, the approach by Gale and Shapley cannot encompass all the different features that arise in applications. Some criticisms targeted the limitation of stability, which may lead to a matching half the size of a maximum-size matching, and more generally may disqualify a beneficial matching because of the preferences of a single pair of agents. For instance, by analyzing real data from the student-public schools market, Abdulkadiro{\u{g}}lu, Pathak, and Roth~\cite{abdulkadirouglu2009strategy} observed that dropping the stability constraint may lead to matchings that are considerably preferred by the student population.

Investigating solution concepts alternative to stability has therefore become an important area of research. However, such alternative concepts often lack many of the attractive structural and algorithmic properties of stable matchings. For instance, the set of Pareto-optimal matchings is more elusive than that of stable matchings, and other than algorithms for finding a few (yet significant) Pareto-optimal matchings, not much is known. Much research has therefore been devoted to defining alternative solution concepts that are tractable, often reducing in a way or another to stable matchings themselves~\cite{abraham2004pareto,abraham2007stable,cseh2022understanding,faenza2021quasi,faenza2021stable,faenza2021affinely,huang2021popularity,manlove2013algorithmics}. 

Another line of criticism to the approach by Gale and Shapley focuses on the marriage model itself, often considered not informative enough of preferences of agents, or not flexible enough to model interactions in certain markets. More general models include those replacing strict preference lists with preference lists with ties~\cite{irving2000hospitals} or choice functions~\cite{roth1984stability}. In this paper, we will be concerned with the classical extension obtained by dropping the bipartiteness requirement on the input graph $G$, leading to the \emph{roommate model}~\cite{gusfield1988structure,irving1985efficient}.

\paragraph{Internal Stability, von Neumann-Morgenstern Stability, and Internal Closedness.} We study two solution concepts alternative to stability, in both the marriage and the roommate model. The definitions of those concepts rely on the classical game theory notion\footnote{Unfortunately, standard terminology overloads the term \emph{stability}. However, one can easily distinguish among those concepts since stability \`a la Gale and Shapley does not have any attribute associated to it.} of \emph{internal stability}~\cite{von2007theory}. For two matchings $M,M'$ of a roommate instance, define the following relation: $M$ \emph{blocks} $M'$ if there is an edge of $M$ that blocks $M'$. A set ${\cal S}$ of matchings is internally stable if there are no $M,M' \in {\cal S}$ such that $M$ blocks $M'$. 
Note that while stability is a property of a matching, internal stability is a property of a set of matchings. Moreover, the family of stable matchings is clearly internally stable. 

As discussed by von Neumann and Morgenstern~\cite{von2007theory}, a family of internally stable solutions ${\cal S}$ to a game (a family of matchings, in our case) can be thought of as the family of ``standard behaviours'' within a social organization: solutions in ${\cal S}$ are all and only those that are deemed acceptable according to current rules. As also argued in~\cite{ehlers2020legal,von2007theory}, one can however question why other solutions are excluded from being members ${\cal S}$. Hence, in order for an internally stable set ${\cal S}$ to be deemed an acceptable standard, it is often required that ${\cal S}$ satisfies further conditions. A typical one requires ${\cal S}$ to be \emph{externally stable}: for every matching $M' \notin {\cal S}$, there is a matching $M \in {\cal S}$ such that $M$ blocks $M'$. A set that is both internally and externally stable is called \emph{von Neumann-Morgenstern} (\emph{vNM}) \emph{stable}~\cite{lucas1992neumann,von2007theory}.

von Neumann and Morgenstern believed vNM stability to be the main solution concept for cooperative games~\cite{ehlers2007neumann,von2007theory}, and the book by Shubik~\cite{shubik1982game} reports more than 100 works that investigate this concept until 1973 (see also~\cite{lucas1992neumann}). However, recent research has shifted the focus from vNM stability to the core of games, which enjoys stronger properties, and is often easier to work with, than vNM stable sets~\cite{ehlers2007neumann}. Interestingly, when we restrict to bipartite matching problems, vNM stability enjoys many of the nice properties that it does not satisfy in more general settings, such as existence, unicity, and algorithmic tractability~\cite{ehlers2007neumann,ehlers2020legal,faenza2018legal,wako2010polynomial}. Conversely, to the best of our knowledge, no prior research has investigated vNM stability in the roommate model.

To define internal closedness, we relax the external stability condition to inclusionwise maximality. We therefore dub \emph{internally closed} an inclusionwise maximal set of internally stable matchings.  Our definition is motivated by two considerations. First, a vNM stable set may not always exist in a roommate instance (see Example~\ref{exp:no_LP_no_SM} in Section~\ref{sec:vNM not exist}), while an internally closed set of matchings always exists. As already observed by von Neumann and Morgenstern~\cite{von2007theory}, existence is a fundamental condition for a solution concept to be considered acceptable\footnote{Indeed, von Neumann and Morgenstern~\cite{von2007theory} seem to suggest that, for the games they consider, vNM stable sets always exist, which was later disproved~\cite{lucas1968game}.}. Second, a central planner may want a specific set of internally stable matchings ${\cal S}$ to be deemed as feasible solutions. It is easy to see that, already in the marriage case, ${\cal S}$ may not be contained in a vNM stable set. Conversely, an internally closed set of matchings containing ${\cal S}$ always exists, as one can start from ${\cal S}$ and iteratively enlarge it as to achieve inclusionwise maximality while preserving internal stability. As we will see, our analysis of internally closed sets of matching will also lead to an algorithmic understanding of vNM stable sets, thus showing the implication of internal closedness for classical stability notions.
\subsection{Overview of Contributions and Techniques} 

Motivated by the discussion above, in this paper we investigate structural and algorithmic properties of vNM stable and internally closed sets of matching. Our results also have implications for the algebraic theory of stable matchings more generally. It is worth presenting our contributions from the end -- that is, from their algorithmic implications. 

\paragraph{Algorithmic and Complexity Results.} We show that, in a marriage instance, we can find in polynomial time an internally closed set of matchings containing any given internally stable set of matchings. Conversely, in a roommate instance, even deciding if a set is internally closed, or vNM stable, is co-NP-hard, and the problem of finding a vNM stable set of matchings is also co-NP-hard. 
All these results are based on structural insights on internally closedness, which in turns imply structural insights for (stable) matching problems more generally. Those are discussed next.

 \paragraph{From Matchings to Edges.} The statement of the algorithmic results in the previous paragraph glossed over the complexity issue of representing the input and the output to our problems, since a family of internally stable matchings may have size exponential in the size of $(G,>)$. This is problematic, since state-of-the-art algorithms for stable matchings and related tractable concepts run in time polynomial in the size of $(G,>)$. We bypass this concern by showing that every internally closed set of matchings coincides with the set ${\cal S}'$ of stable matchings in a \emph{subinstance} $(G',>)$ of the original instance $(G,>)$, i.e., an instance obtained from $G$ by removing certain edges. Hence, the input will be given by a set $E_0\subseteq E(G)$, that will implicitly describe the set ${\cal S}'$ of stable matchings in $(G[E_0],>)$. Similarly, a closed or vNM stable set of matchings will be described compactly by $E_C\subseteq E(G)$. See Section~\ref{sec:prelim} for details.

The observation above allows us to work with (polynomially-sized) sets of edges, rather than with (possibly exponentially-sized) sets of matchings. Our next step lies in understanding how to enlarge the input set of edges $E_0$, as to obtain the set $E_C$. The challenge of this task lies in the fact that adding \emph{any single edge} to $E_0$ may not lead to a strictly larger internally stable set of matchings. Moreover, certain choices of edges added to $E_0$ may prevent the addition of other edges to $E_0$, thus precluding the possibility of finding an internally closed set of matching. Such a phenomenon is illustrated in the example below.

\begin{example}\label{ex:intro}
We consider an instance of the marriage problem with 8 agents $1, 2, 3, 4$ and $A, B, C, D$. The reader can follow the construction in Figure~\ref{fig:ex:first}. Preferences of agents are arranged in table $T$, where for each agent we list their possible partners in order of preference (see Section~\ref{sec:prelim-first} for a discussion on this representation). $T$ has exactly one stable matching $M_1=\{1A, 2B, 3C, 4D\}$. $\{M_1\}$ is then internally stable. If we let  $T^*$ be the table that contains all and only the edges from $M_1$, then $\{M_1\}$ is the set of stable matchings in $T^*$.

We now want to construct an instance containing $T^*$ (and contained in $T$) whose associated set of stable matching is internally closed. The reader can check that the addition of any single edge to $T^*$ leads to an instance whose set of stable matchings is still $\{M_1\}$. On the other hand, adding edges $M_2=\{1B, 2A, 3D, 4C\}$ to $T^*$ gives $T'$, whose family of stable matchings is $\{M_1,M_2\}$. One can check that $\{M_1,M_2\}$ is the unique internally closed family of matchings containing $\{M_1\}$, and it is also the unique vNM stable set in the instance described by $T$. On the other hand, if we had selected the ``wrong'' edge $3A$ to be added to $T^*$, then we would have obtained $T''$. The family of stable matchings associated to $T''$ or any table containing $T''$ is $\{M_1\}$, hence adding $3A$ to $T^*$ prevents us from finding an internally closed (or vNM stable) set of matchings. 

\begin{figure}
\begin{center}
$T=$ \, \begin{tabular}{||c|c||}
\hline 
Agent & Preference List \tabularnewline
\hline 
\hline 
$1$ & $A, B$ \tabularnewline
\hline 
$2$ & $B,A$ \tabularnewline
\hline 
$3$ & $C,A,D$ \tabularnewline
\hline 
$4$ & $D,C$ \tabularnewline
\hline 
$A$ & $3,2,1$ \tabularnewline
\hline 
$B$ & $1,2$ \tabularnewline
\hline 
$C$ & $4,3$ \tabularnewline
\hline 
$D$ & $3,4$ \tabularnewline
\hline 
\end{tabular}
\quad  \quad 
$T^*=$ \, \begin{tabular}{||c|c||}
\hline 
Agent & Preference List \tabularnewline
\hline 
\hline 
$1$ & $A$ \tabularnewline
\hline 
$2$ & $B$ \tabularnewline
\hline 
$3$ & $C$ \tabularnewline
\hline 
$4$ & $D$ \tabularnewline
\hline 
$A$ & $1$ \tabularnewline
\hline 
$B$ & $2$ \tabularnewline
\hline 
$C$ & $3$ \tabularnewline
\hline 
$D$ & $4$ \tabularnewline
\hline 
\end{tabular}
\par\end{center}

\begin{center}
$T'=$ \, \begin{tabular}{||c|c||}
\hline 
Agent & Preference List \tabularnewline
\hline 
\hline 
$1$ & $A, B$ \tabularnewline
\hline 
$2$ & $B,A$ \tabularnewline
\hline 
$3$ & $C,D$ \tabularnewline
\hline 
$4$ & $D,C$ \tabularnewline
\hline 
$A$ & $2,1$ \tabularnewline
\hline 
$B$ & $1,2$ \tabularnewline
\hline 
$C$ & $4,3$ \tabularnewline
\hline 
$D$ & $3,4$ \tabularnewline
\hline 
\end{tabular}
\quad  \quad 
$T''=$ \, \begin{tabular}{||c|c||}
\hline 
Agent & Preference List \tabularnewline
\hline 
\hline 
$1$ & $A$ \tabularnewline
\hline 
$2$ & $B$ \tabularnewline
\hline 
$3$ & $C, A$ \tabularnewline
\hline 
$4$ & $D$ \tabularnewline
\hline 
$A$ & $3, 1$ \tabularnewline
\hline 
$B$ & $2$ \tabularnewline
\hline 
$C$ & $3$ \tabularnewline
\hline 
$D$ & $4$ \tabularnewline
\hline 
\end{tabular}
\par\end{center}
\caption{On the upper left: the instance $T$ of the roommate problem. On the upper right: the table $T^*$ containing all and only the stable edges in $T$. On the bottom left: table $T'$ representing the vNM stable and internally closed set. On the bottom right: $T''$ which adds edge $3A$ to $T^*$.}\label{fig:ex:first}
\end{figure}
\end{example}

Example~\ref{ex:intro} suggests that one needs to iteratively add \emph{sets of edges} rather than single edges, possibly leading to a search space that is exponentially large. However, classical algebraic ideas on stable matchings come to our rescue. 

\smallskip

\paragraph{From Edges to Rotations: the Marriage Case.} Let us first focus on the marriage case, which is investigated in Section~\ref{sec:marriage}. Since vNM stable sets in this model are well-understood~\cite{ehlers2007neumann,faenza2018legal,wako2010polynomial}, we consider only internally closed sets of matchings. A fundamental result attributed in~\cite{knuth1976mariages} to Conway states that stable matchings can be arranged in a distributive lattice under a simple partial order $\succeq_X$ (see Section~\ref{sec:alternative-char}). In turns, this distributive lattice can be described more succinctly as the family of closed sets of an associated poset, whose elements are called \emph{rotations} and whose associated partial order is denoted by $\sqsubseteq$~\cite{Irving}. Rotations are certain cycles in the marriage instance (see Section~\ref{sec:rotations}). 

We use the lattice of matchings and the poset of rotations to prove two characterizations of internally closed sets of matchings. The first shows that a set of matchings is internally closed if and only if it is inclusionwise maximal among the sets of matchings whose lattice wrt (with respect to) $\succeq_X$ shares with the lattice of stable matchings two properties: being closed wrt the join operator, and satisfying the opposition of interest (see Section~\ref{sec:lattice-stable} for definitions). So although we lose stability when moving from the set of stable matchings to an internally closed set of matchings, we keep some of the nice properties that are implied by stability. 

The second characterization shows a certain ``maximality'' property of the poset of rotations associated to an internally closed set of matchings. Roughly speaking, a family of matchings is internally closed if and only if in the poset of rotations associated to it, no rotation can be \emph{dissected}, i.e., replaced with $2$ or more new rotations, and we cannot \emph{vertically expand} the poset by adding rotations that are maximal or minimal wrt the partial order $\sqsubseteq$ (see Section~\ref{sec:alternative-char}). A crucial role in this characterization is played by \emph{generalized rotations}, also introduced in this work (see Section~\ref{sec:generalized-rotations}).

This second characterization of internally closed sets of matchings is then employed by an algorithm that iteratively enlarges the set $E_0$ by trying to dissect rotations or to vertically expand the associated poset. This approach leads to a polynomial-time algorithm for constructing an internally closed set of matchings containing a given set of internally stable set of matchings. See Theorem~\ref{algo:marriage_IStoIC-MC_poly}.

\paragraph{From Edges to Rotations: the Roommate Case.} The roommate case is investigated in Section~\ref{sec:roommate_case} and Section~\ref{sec:vNM_is_NP_hard}. As in the marriage case, the set of stable matchings in the roommate case, when it exists, can be described in terms of a poset of (suitably defined) rotations~\cite{Irving}. However, the resulting structure is less neat. In particular, we are not aware of any interesting lattice structure that can be associated to the family of stable matchings in the roommate case. 
Similarly to the marriage case, our goal is to understand when the poset of rotations associated to a set of stable matchings can be extended, leading to a larger internally stable set of matchings. Our main structural contribution here is to show that the set of stable matchings of an instance is internally closed if and only if the poset of rotations cannot be augmented via what we call a \emph{stitched rotation} (see Definition~\ref{def:stitched_rotation} and Theorem~\ref{augment_dual}). We then show how to construct, from any instance $\phi$ of 3-SAT, a roommate instance that admits a stitched rotation if and only if $\phi$ is solvable, thus proving the claimed hardness result (see Section~\ref{sec:NP_hard_deciding_internally_closed}). 

 

The notion of internally closed sets of matchings has important implications. It provides us with a new tool to understand vNM stability in roommate instances. Most notably, the reduction above can be extended to prove co-NP-hardness of deciding if a set of matchings is vNM stable, and of the problem of finding a vNM stable set. Those results are shown in Section \ref{sec:vNM_is_NP_hard}.

\subsection{Further Discussion on Related Work} To the best of our knowledge, internally closed sets of matchings have not been studied in the literature, and are first defined in this work. Most of the related research has focused on vNM stable sets of matchings in the bipartite case. In their works on legal / vNM stable matchings, Wako~\cite{wako2010polynomial} and Faenza and Zhang~\cite{faenza2018legal} give polynomial-time algorithms to construct, in a bipartite model (respectively, in the one-to-one and in the one-to-many case), the (unique) vNM stable set. In particular, their algorithms constructs \emph{one} internally closed set of matchings -- namely, one containing the set of stable matchings. In contrast, our algorithm allows us to obtain an internally closed set of matchings containing \emph{any} given set of (internally stable) matchings. Interestingly, the algorithms from~\cite{faenza2018legal,wako2010polynomial} obtain a set of matchings whose associated poset of rotations strictly contains the poset of rotations associated to the original set of matchings. This is not the case for our algorithm. In a work predating~\cite{faenza2018legal,wako2010polynomial}, Ehlers~\cite{ehlers2007neumann} proves many strong structural properties of vNM stable sets in the marriage model. Ehlers and Morrill~\cite{ehlers2020legal} investigate two definitions of dominance relations among matchings, which lead to different definitions of internal and external stability. They also restrict their study to bipartite models, although they allow the preferences of one side of the agents to be described by certain choice functions, thus generalizing the strict preference lists from the marriage model. More work in this area on the bipartite case has appeared in~\cite{ehlers2007neumann,herings2017stable,mauleon2011neumann}, including assignment games~\cite{nunez2013neumann}. vNM stable sets do not seem to have been studied in the roommate case, while the related concept of vNM fairsighted stability has been investigated by Klaus et al.~\cite{klaus2011farsighted}. 

\section{Preliminaries}\label{sec:prelim}

\subsection{Roommate Instances, Preference Tables, and Matchings}\label{sec:prelim-first}

\paragraph{Standard Notation.} We denote by $\triangle$ the symmetric difference operator between sets and, for $n \in \mathbb{N}$, we let $[n]=\{1,\dots,n\}$ and $[n]_0=\{0\} \cup [n]$. $\uplus$ denotes the union operator between disjoint sets.

\paragraph{Basic Notions and Preference Table Representation of Roommate Instances.} In this paper, in addition to using the classical graph representation for marriage and roommate instances defined in Section~\ref{sec:intro}, it will be useful to present those instances via preference tables. This representation is not new, see, e.g.,~\cite{Irving}. A (\emph{Roommate}) \emph{instance} is therefore described as follows. Let $A$ be the set of all agents. Each agent $z\in A$ has a \emph{preference list} consisting of a subset $A(z)\subseteq A\backslash \{z\}$ and a strict ordering (i.e., without ties) of elements from this list. For each agent $z\in A$, $A(z)$ contains therefore all and only the agents that are acceptable to $z$. The collection of all preference relations is then represented by the \emph{preference table} $T(A, >)$ (or $T$ in short), where $>$ collects, for each agent $z\in A$, the strict ordering within the preference list of agent $z$, denoted as $>_z$. We assume that  preference tables are symmetric, i.e., $z_1$ is on $z_0$'s preference list if and only if $z_0$ is on $z_1$'s preference list. See Figure~\ref{fig:ex:first}, top left, for an example of a preference table $T$ with $8$ agents. 

For $z, z_1,z_2\in A$ with $z_1, z_2\in A(z)$, we say that $z$ \emph{strictly prefers} $z_1$ to $z_2$ if $z_1 >_{z} z_2$; and we say that $z$ \emph{(weakly) prefers} $z_1$ to $z_2$ if $z_1 >_{z} z_2$ or $z_1=z_2$, and write $z_1 \geq_{z} z_2$. For $z\in A$, we additionally assume that $z_1 >_{z} \emptyset$ for all $z_1\in A(z)$, that is, all agents strictly prefer being matched to some agent in their preference list than being left unmatched. 

Because of the symmetry assumption, we say that $z_0z_1\in T$ if $z_1$ is on $z_0$'s preference list (equivalently, if $z_0$ is on $z_1$'s preference list) and call $z_0z_1$ an \emph{edge} \emph{(of $T$)}. The $\emph{deletion}$ of edge $z_0z_1$ from $T$ means to delete $z_0$ from $z_1$'s preference list and delete $z_1$ from $z_0$'s preference list. Let $r_T(z_0,z_1)$ denote the \emph{ranking} (i.e., the position, counting from left to right) of $z_1$ within the preference list of $z_0$ in preference table $T$. Therefore, $r_T(z_0,z_1)<r_T(z_0,z_2)$ if and only if agent $z_0$ strictly prefers $z_1$ to $z_2$ within preference table $T$. We let $r_T(z, \emptyset)=+\infty$.

For a given preference table $T$ and agent $z$, let $f_{T}(z)$, $s_{T}(z)$, $\ell_{T}(z)$ denote the first, second and last agent on $z$'s preference list in $T$. 


\paragraph{Consistency, Subtables.} We say that two rommmate instance $T', T''$ have \emph{consistent} preference lists if, for any pair of edges $z_0z_1$, $z_0z_2$ such that $z_0z_1, z_0z_2\in T$ and $z_0z_1, z_0z_2\in T'$, we have $r_T(z_0,z_1)<r_{T}(z_0,z_2)$ if and only if $r_{T'}(z_0,z_1)<r_{T'}(z_0,z_2)$. We write $T' \subseteq T$ when $T',T$ are instances with the following properties: (a) $z_0z_1 \in T$ for all $z_0z_1 \in T'$ and (b) $T,T'$ have consistent preference lists. In this case, we call $T'$ a \emph{subtable} of $T$. 

Let $T_1, T_2 \subseteq T$ be two subtables of a preference table $T$, and note that they must have consistent preference lists by definition. Let $T_1\cap T_2$ be the subtable of $T$ that contains all edges that are in both $T_1$ and $T_2$; let $T_1\cup T_2$ be the subtable of $T$ that contains all edges that are in $T_1$, $T_2$, or both.

\paragraph{Matching Basics.}
Fix a roommate instance $T(A,>)$. A \emph{matching} $M$ of $T$ is a collection of disjoint pairs of agents from $A$, with the property that $M\subseteq T$. For $z_0 \in A$, we let  $M(z_0)$ be the partner of $z_0$ in matching $M$. If $z_0z_1 \notin M$ for every $z_1 \in A$, we write $M(z_0)=\emptyset$. If $M(z_0)\neq \emptyset$, we say that $z_0$ is \emph{matched} (\emph{in $M$}). If, for matchings $M,M'$ and an agent $z$ we have $M(z)\geq_z {M'}(z)$, we say that $z$ \emph{weakly prefers} $M$ to $M'$; if $M(z)>_z {M'}(z)$, then we say $z$ \emph{strongly prefers} $M$ to $M'$. 

For a matching $M\subseteq T$, we say that $ab \in T$ is a \emph{blocking pair} for $M$ if $b >_a M(a)$ and $a >_b M(b)$. We also say that $M$ is \emph{blocked} by $ab$. A matching $M$ is \emph{stable} if it is not blocked by any pair, and \emph{unstable} if it is blocked by some pair. We use the notion of blocking pair to define the following binary relation among matchings: matching $M'$ \emph{blocks} matching $M$ if $M'$ contains a blocking pair $ab$ for $M$. Note that any matching $M$ can be interpreted as a preference table, where, for each $z_0z_1\in M$, $z_1$ is the only agent appearing in $z_0$'s preference list. We can therefore write $M\subseteq T$, $T\cup M$, and $T\cap M$ using the preference table interpretation of matchings. 

For a roommate instance $T$, we let $\mathcal{M}(T)$ denote the set of matchings of $T$, and $\mathcal{S}(T)$ denote the set of stable matchings of $T$. 
If $xy \in T$ is contained in some stable matching of $T$, then it is called a \emph{stable edge} or \emph{stable pair}. The subtable $E_S(T)$ obtained from $T$ by removing all unstable edges is called the \emph{stable subtable} of $T$. If $E_S(T)=T$, then $T$ is called a \emph{stable table}. When $T$ is clear from the context, we abbreviate ${\cal M}(T), {\cal S}(T), r_T(\cdot, \cdot),  E_S(T), \dots$ by ${\cal M}$, ${\cal S}$, $r(\cdot, \cdot), E_S, \dots$. 

The following result extends structural properties of matching that do not block each other (see, e.g.,~\cite{faenza2018legal}) to the roommate case. Given two matchings $M,M'$, edge $ab  \in M\cup M'$ is called \emph{irregular} (\emph{for $M, M'$}) if both $a$ and $b$ strictly prefer $M$ to $M'$ or both strictly prefer $M'$ to $M$. See Appendix \ref{appendix_sym_diff} for a proof.

\begin{lemma}\label{symmetric_diff} Let $T$ be a roommate instance, and $M,M'\in \mathcal{M}$. Assume that $M$ does not block $M'$ and $M'$ does not block $M$. Then:
\begin{enumerate}
    \item $G[M\triangle M']$ is a disjoint union of singletons and even cycles. Hence, a node is matched in $M$ if and only if it is matched in $M'$. 
    \item No edge from $M\cup M'$ is irregular for $M,M'$.
\end{enumerate}
\end{lemma}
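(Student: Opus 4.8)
The plan is to analyze $M \triangle M'$ edge by edge, using the hypothesis that neither matching blocks the other to rule out the configurations that would produce odd cycles, paths, or irregular edges. First I would recall the standard fact that for any two matchings $M, M'$, the graph $G[M \triangle M']$ has maximum degree $2$, so it decomposes into isolated vertices, paths (possibly of length one, i.e., single edges), and cycles; cycles alternate between $M$-edges and $M'$-edges and are therefore automatically even, so the content of part 1 is really to exclude paths with at least one edge. Suppose toward a contradiction that some connected component $P$ of $G[M\triangle M']$ is a path with an endpoint $z$; then $z$ is matched in exactly one of $M, M'$, say $M(z) = w \neq \emptyset$ but $M'(z) = \emptyset$, and $zw \in M \setminus M'$. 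I would then argue that $zw$ must be a blocking pair: since $z_1 >_z \emptyset$ for all $z_1 \in A(z)$, agent $z$ strictly prefers $w = M(z)$ to $M'(z) = \emptyset$, so $z$ wants to deviate. For $w$, either $M'(w) = \emptyset$ as well, in which case $w$ also strictly prefers $M$, and $zw \in M$ blocks $M'$, a contradiction; or $M'(w) \neq \emptyset$, and then $w$ is the other endpoint-neighbor situation — here I need to be slightly careful and follow the path further, but the key point is that at the $M$-exposed endpoint $z$ the incident $M$-edge $zw$ gives a one-sided strict preference, and pairing it with $w$'s preference (which, since $w$ is matched in $M'$ to something it reached via an $M'$-edge of the path) lets me conclude $zw$ blocks $M'$ or the corresponding $M'$-edge at the other end blocks $M$. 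Iterating along the path, the alternation forces one of the two endpoints to yield an actual blocking pair, contradicting the hypothesis.

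More concretely for part 1, I would set up the following cleaner argument: let $P = z_0 z_1 z_2 \cdots z_k$ be a maximal path in $G[M \triangle M']$, with $z_0 z_1$ belonging to, say, $M$. Because $P$ is maximal, $z_0$ is $M'$-exposed, hence $z_0$ strictly prefers $M$ to $M'$. Look at the other endpoint $z_k$: if $z_k$ is exposed in $M$ then $z_{k-1}z_k \in M'$ and $z_k$ strictly prefers $M'$ to $M$; if $z_k$ is exposed in $M'$ then $z_{k-1}z_k \in M$ and $z_k$ strictly prefers $M$ to $M'$. In the first sub-case I would walk from $z_0$ along the path and find the first agent $z_i$ that does \emph{not} strictly prefer $M$ to $M'$ (such exists since $z_k$ strictly prefers $M'$), and show the edge $z_{i-1}z_i \in M$ (or $\in M'$, depending on parity) is a blocking pair; in the second sub-case a symmetric/length argument applies. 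This is essentially the marriage-case argument of~\cite{faenza2018legal} carried over verbatim, since nothing used bipartiteness — the only structural input is the alternation of $M$- and $M'$-edges along a component, which holds in general graphs.

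For part 2, once we know (part 1) that every component of $M \triangle M'$ is a singleton or an even alternating cycle, I would take any edge $ab \in M \cup M'$ and show it is not irregular. If $ab \in M \cap M'$ it is trivially not irregular, since $M(a) = b = M'(a)$ and $M(b) = a = M'(b)$, so neither endpoint strictly prefers one matching to the other. If $ab \in M \setminus M'$ (the case $M' \setminus M$ being symmetric), then $ab$ lies on an even cycle $C$ of $M \triangle M'$; suppose for contradiction both $a$ and $b$ strictly prefer $M$ to $M'$. Then $ab \in M$ is a blocking pair for $M'$ — because $b >_a M'(a)$ (as $a$ strictly prefers $M$ and $M(a)=b$) and $a >_b M'(b)$ — so $M$ blocks $M'$, contradiction; if instead both strictly prefer $M'$ to $M$, then $a$ strictly prefers $M'(a)$ (its partner along the other cycle edge at $a$) to $M(a) = b$, and likewise for $b$, and the $M'$-edge at $a$ — wait, more directly: consider the two edges of $C$ at $a$, namely $ab \in M$ and $ab' \in M'$ where $b' = M'(a)$; if $a$ strictly prefers $M'$ to $M$ then $b' >_a b$, and symmetrically at $b$ the $M'$-edge $bb''$ with $b'' = M'(b)$ satisfies $b'' >_b a$ — this does not immediately give a blocking pair of $M$ inside $M'$, so instead I would argue: since $a$ strictly prefers $M'$ and $M'(a) = b'$, and $b$ strictly prefers $M'$ and $M'(b) = b''$, the edge $ab \in M$ is blocked in the sense that... . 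Actually the clean statement is: if both endpoints of an $M$-edge $ab$ strictly prefer $M'$, then looking at $M'$ restricted to the cycle, one finds some $M'$-edge $xy$ on $C$ both of whose endpoints strictly prefer $M$ to $M'$ (a parity/counting argument around the cycle: strict preferences must ``switch direction'' somewhere), and that edge blocks $M$. The hard part will be packaging this cycle parity argument cleanly — it is exactly the kind of ``follow the preferences around the alternating cycle'' argument that is intuitive but fiddly to write without case explosion; I expect that, as in~\cite{faenza2018legal}, one shows that along any alternating cycle the set of agents preferring $M$ and the set preferring $M'$ each induce ``arcs'', and a blocking pair sits at a boundary, so the no-mutual-blocking hypothesis forces \emph{every} agent on the cycle to be indifferent, i.e., matched to the same partner — which contradicts $ab \in M \setminus M'$ unless the cycle is degenerate. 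I would lean on the cited Lemma/appendix proof for the bookkeeping and simply indicate that the marriage-case proof transfers because it never invokes bipartiteness.
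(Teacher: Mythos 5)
Your overall plan---decompose $G[M\triangle M']$ into components of maximum degree $2$, rule out paths by propagating preferences from an exposed endpoint, and kill irregular edges by a propagation around the alternating cycle---is exactly the paper's approach. But in both parts the concrete step where you point at the blocking pair is wrong, and that identification is the real content of the lemma. In part 1, your ``more concrete'' version claims the edge $z_{i-1}z_i$ at the \emph{first} agent $z_i$ preferring $M'$ is a blocking pair. It is not: $z_{i-1}$ prefers $M$ and $z_i$ prefers $M'$, while an edge of $N$ blocks the other matching only when \emph{both} endpoints strictly prefer $N$. In fact the no-blocking hypothesis forces the switch already at $z_1$ (since $z_0$ prefers $M$ and $z_0z_1\in M$, $z_1$ must prefer $M'$), and then forces each $z_j$ to prefer the matching containing $z_jz_{j+1}$ all the way along; the contradiction sits at the far endpoint $z_k$, which, being exposed in one matching, prefers the matching $N$ containing $z_{k-1}z_k$ just as $z_{k-1}$ does, so that last edge blocks the other matching. (This is what the paper does, phrased as $r(x_k,\emptyset)<r(x_k,x_{k-1})$ being impossible.) Your first, vaguer paragraph gestures at the correct endpoint argument, but the version you present as the clean one locates the blocking pair in the wrong place.

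Part 2 has the same error with the sign flipped. You assert that if an $M$-edge $ab$ has both endpoints preferring $M'$, one finds an $M'$-edge both of whose endpoints prefer $M$, ``and that edge blocks $M$.'' An $M'$-edge $xy$ blocks $M$ precisely when both $x$ and $y$ strictly prefer $M'$ (each prefers the other to its $M$-partner); an $M'$-edge whose endpoints both prefer $M$ blocks nothing and is just another irregular edge of the non-immediately-contradictory type, i.e., another instance of what you are trying to exclude. Likewise the hypothesis cannot ``force every agent on the cycle to be indifferent'': on an alternating cycle every agent has two distinct partners and preferences are strict. What the no-blocking hypothesis actually forces is that preferences strictly \emph{alternate} around the cycle, so an irregular edge (two consecutive agents with the same preference) propagates around the even cycle and returns to contradict the label of one of its own endpoints; the edge where the propagation breaks is an edge of some $N$ with both endpoints preferring $N$, hence a genuine blocking pair. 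Finally, leaning on ``the cited appendix proof for the bookkeeping'' is circular here, since that appendix \emph{is} the proof of this lemma. The skeleton is right, but both pivotal identifications of the blocking pair need to be redone.
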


\subsection{Internal Stability and Related Concepts}
\paragraph{Internal Stability.} We now define new concepts tailored for this paper, and related basic facts. Let $T$ be a roommate instance. We say that a set of matchings $\M'\subseteq \M(T)$ is \emph{internally stable} if given any two matchings $M_0, M_1\in \M'$, $M_0$ does not block $M_1$ and $M_1$ does not block $M_0$. 
\begin{obs}\label{obs:subset}
Let $T$ be an instance of the roommate problem, ${\cal M}''\subseteq {\cal M}'\subseteq {\cal M}(T)$ and ${\cal M}'$ be internally stable. Then ${\cal M}''$ is internally stable.
\end{obs}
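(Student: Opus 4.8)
The plan is to prove Observation~\ref{obs:subset} directly from the definition of internal stability, since the property is manifestly monotone under taking subsets. First I would recall that $\M'\subseteq\M(T)$ being internally stable means: for every pair $M_0,M_1\in\M'$, neither $M_0$ blocks $M_1$ nor $M_1$ blocks $M_0$. The goal is to verify this same property for $\M''$.

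The key step is an immediate quantifier restriction. Let $M_0,M_1$ be an arbitrary pair of matchings in $\M''$. Since $\M''\subseteq\M'$, we have $M_0,M_1\in\M'$ as well. Because $\M'$ is internally stable, $M_0$ does not block $M_1$ and $M_1$ does not block $M_0$. As $M_0,M_1$ were an arbitrary pair from $\M''$, this shows that no two matchings in $\M''$ block each other, i.e., $\M''$ is internally stable. (One should also note $\M''\subseteq\M'\subseteq\M(T)$, so $\M''$ indeed consists of matchings of $T$, which is implicitly required by the definition.)

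There is essentially no obstacle here: the statement is a one-line consequence of the fact that ``internal stability'' is a universally quantified condition over pairs, and restricting the ground set only removes pairs that need to be checked, never adds any. The only thing worth being slightly careful about is the degenerate cases — if $\M''$ has zero or one element, the condition holds vacuously — but these are subsumed by the argument above since it quantifies over all (possibly equal, possibly nonexistent) pairs. Hence the proof is complete.
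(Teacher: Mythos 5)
Your proof is correct, and it is exactly the argument the paper intends: the paper states this as an \texttt{Observation} with no written proof precisely because internal stability is a universally quantified condition over pairs, so it is inherited by subsets. Nothing is missing.
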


For an internally stable ${\M}' \subseteq \M(T)$, we define its closure $\overline{\M'}=\{M \in \M(T) : \{M\} \cup \M' \hbox{ is internally stable}\}$. Note that $\overline{{\M}'}$ may not be internally stable. If $\overline{\M'}=\M'$, we say that $\M'$ is \emph{internally closed}. 
By Observation~\ref{obs:subset}, internally closed sets of matchings are exactly the inclusionwise maximal internally stable sets. For internally stable sets $\M'' \supseteq {\cal M}'$, we say that $\M''$ is an \emph{internal closure} of ${\cal M}'$ if $\M''$ is internally closed. Clearly, every internally stable set of matching admits an internal closure. Note also that an internal closure of a set of internally stable matchings is, by definition, internally closed. Moreover, the internal closure of an arbitrary internally stable set of matchings is not necessarily unique.

The following lemmas give basic structural results of internally stable and internally closed sets of matchings. 

\begin{lemma}\label{lem:internally-stable-are-stable}
Let $T$ be an instance of the roommate problem, $\M'\subseteq \M(T)$, and $\widetilde{T}=\cup\{M|M\in \mathcal{M}'\}$. $\M'$ is internally stable if and only if $\mathcal{M}'\subseteq \Ss(\widetilde{T})$.
\end{lemma}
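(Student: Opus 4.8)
The plan is to prove the two directions separately, with the key observation being that every matching $M \in \mathcal{M}'$ is itself a subtable of $\tilde{T}$, so the notion of "blocking pair inside $\tilde{T}$" is tightly linked to the pairwise non-blocking conditions defining internal stability.

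First I would prove the easy direction: if $\mathcal{M}' \subseteq \mathcal{S}(\tilde{T})$, then $\mathcal{M}'$ is internally stable. Take any $M_0, M_1 \in \mathcal{M}'$. Suppose for contradiction that $M_0$ blocks $M_1$, i.e. there is an edge $ab \in M_0$ with $b >_a M_1(a)$ and $a >_b M_1(b)$. Since $ab \in M_0 \subseteq \tilde{T}$, the pair $ab$ is an edge of $\tilde{T}$, so $ab$ is a blocking pair for $M_1$ in the instance $\tilde{T}$, contradicting $M_1 \in \mathcal{S}(\tilde{T})$. Symmetrically $M_1$ does not block $M_0$, so $\mathcal{M}'$ is internally stable.

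For the converse, assume $\mathcal{M}'$ is internally stable and fix $M_1 \in \mathcal{M}'$; I must show $M_1 \in \mathcal{S}(\tilde{T})$, i.e. no edge of $\tilde{T}$ blocks $M_1$. Let $ab \in \tilde{T}$ be arbitrary. By definition of $\tilde{T} = \cup\{M : M \in \mathcal{M}'\}$, there is some $M_0 \in \mathcal{M}'$ with $ab \in M_0$. If $ab$ were a blocking pair for $M_1$, then since $ab \in M_0$, the matching $M_0$ would block $M_1$, contradicting internal stability of $\mathcal{M}'$ (which, by Observation~\ref{obs:subset}, applies in particular to the pair $\{M_0, M_1\} \subseteq \mathcal{M}'$). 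Hence no edge of $\tilde{T}$ blocks $M_1$, and since $M_1 \subseteq \tilde{T}$ is a matching of $\tilde{T}$, we conclude $M_1 \in \mathcal{S}(\tilde{T})$. As $M_1$ was arbitrary, $\mathcal{M}' \subseteq \mathcal{S}(\tilde{T})$.

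The only subtlety — and the one point I would state carefully rather than the "main obstacle," since the argument is genuinely short — is that $\tilde{T}$ as defined via $\cup\{M : M \in \mathcal{M}'\}$ uses the preference-table interpretation of matchings from Section~\ref{sec:prelim-first}, so I should note that $\tilde{T}$ is a well-defined subtable of $T$ (its preference lists are consistent with $T$ because each $M$ is), that $M_1 \subseteq \tilde{T}$ so $M_1 \in \mathcal{M}(\tilde{T})$, and that a pair $ab$ blocks $M_1$ in $\tilde{T}$ exactly when $ab \in \tilde{T}$ and the ranking conditions $b >_a M_1(a)$, $a >_b M_1(b)$ hold with respect to $>$, which is unchanged by passing to the subtable. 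With these identifications in place both directions are immediate from the definitions.
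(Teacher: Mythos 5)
Your proof is correct and follows essentially the same argument as the paper: the forward direction traces each edge of $\tilde{T}$ back to a matching in $\mathcal{M}'$ and invokes internal stability, and the reverse direction observes that blocking edges of matchings in $\mathcal{M}'$ would be blocking pairs in $\tilde{T}$ (the paper phrases this slightly more abstractly via Observation~\ref{obs:subset} and the internal stability of $\mathcal{S}(\tilde{T})$, but the content is identical). No gaps.
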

\begin{proof}
$(\Rightarrow)$ Let $M \in {\cal M}'$. Clearly $M \subseteq \widetilde T$. Let $e \in \widetilde T$. By construction, there exists $M' \in {\cal M}'$ such that $e \in M'$. By internal stability, $M'$ does not block $M$. Hence, $e$ does not block $M$. We conclude that $M \in {\cal S}(\widetilde T)$, hence ${\cal M}'\subseteq {\cal S}(\widetilde T)$. 

$(\Leftarrow)$ The set of stable matchings of an instance $\widetilde{T}$ is internally stable. By Observation~\ref{obs:subset}, so is every subset of it. 
\end{proof}

When we restrict to internally closed sets of matchings, one direction of Lemma~\ref{lem:internally-stable-are-stable} can be strengthened.

\begin{lemma}\label{lem:from-internally-closed-to-edges}
Let $T$ be an instance of the roommate problem, $\M'\subseteq \M(T)$ and $\widetilde{T}=\cup\{M|M\in \M'\}$. If $\M'$ is internally closed, then $\M'=\Ss(\widetilde{T})$.
\end{lemma}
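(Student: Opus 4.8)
The plan is to prove the contrapositive-flavored statement directly: assuming $\M'$ is internally closed, I already know from Lemma~\ref{lem:internally-stable-are-stable} that $\M' \subseteq \Ss(\tilde T)$, so the only thing left is the reverse inclusion $\Ss(\tilde T) \subseteq \M'$. So let $M \in \Ss(\tilde T)$ be an arbitrary stable matching of $\tilde T$; I want to show $M \in \M'$. Since $\M'$ is internally closed, it suffices to show $M \in \overline{\M'}$, i.e., that $\{M\} \cup \M'$ is internally stable. Because $\M'$ is already internally stable, this reduces to checking that for every $M' \in \M'$, the matching $M$ does not block $M'$ and $M'$ does not block $M$.

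\textbf{Key steps.} First, fix $M' \in \M'$. Since $M' \subseteq \tilde T$ by construction and $M \in \Ss(\tilde T)$, no edge of $\tilde T$ — in particular no edge of $M'$ — blocks $M$; hence $M'$ does not block $M$. Second, I must show $M$ does not block $M'$, i.e., no edge $ab \in M$ is a blocking pair for $M'$. Here is where I expect the work to lie. Suppose for contradiction $ab \in M$ blocks $M'$, so $b >_a M'(a)$ and $a >_b M'(b)$. The edge $ab$ lies in $M \subseteq \tilde T$. I would now want to produce some matching in $\M'$ that already ``witnesses'' $ab$ or contradicts its presence. The natural move: consider $\M' \cup \{M\}$ and argue via a minimal counterexample or an exchange argument. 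Alternatively — and this seems cleaner — I would examine $M$ against a single well-chosen $M' \in \M'$ together with the symmetric-difference structure of Lemma~\ref{symmetric_diff}: since $M \in \Ss(\tilde T)$ and $M' \in \Ss(\tilde T)$ (the latter by Lemma~\ref{lem:internally-stable-are-stable}), neither blocks the other already, because stable matchings of a common instance never block each other. That is the crux: $\M' \subseteq \Ss(\tilde T)$ means every $M' \in \M'$ is itself a stable matching of $\tilde T$, and two stable matchings of the same roommate instance cannot block one another (if $ab \in M$ blocked $M'$, then $ab \in \tilde T$ would block the stable matching $M'$, contradicting stability of $M'$ in $\tilde T$).

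\textbf{Putting it together.} So the argument collapses to: (i) $\M' \subseteq \Ss(\tilde T)$ by Lemma~\ref{lem:internally-stable-are-stable}; (ii) for any $M \in \Ss(\tilde T)$ and any $M' \in \Ss(\tilde T)$, $M$ does not block $M'$ and $M'$ does not block $M$, because a blocking edge would lie in $\tilde T$ and contradict stability of the blocked matching in $\tilde T$; (iii) therefore $\{M\} \cup \M'$ is internally stable, so $M \in \overline{\M'} = \M'$ by internal closedness; (iv) hence $\Ss(\tilde T) \subseteq \M'$, and combined with (i) we get $\M' = \Ss(\tilde T)$. The main obstacle is really just step (ii), and it is mild: one has to be careful that ``$M$ blocks $M'$'' is defined via an edge \emph{of $M$}, which is automatically an edge of $\tilde T$, so stability of $M'$ within $\tilde T$ directly forbids it; the symmetric-difference machinery of Lemma~\ref{symmetric_diff} is not even needed here, though it would serve as a backup if a subtlety with singletons or the acceptability of $\emptyset$ arose. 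I would double-check that $\tilde T$ is a legitimate subtable of $T$ (consistency of preference lists is inherited since all $M \in \M'$ are submatchings of $T$) so that $\Ss(\tilde T)$ and the blocking relation in $\tilde T$ are well-defined.
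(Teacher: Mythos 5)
Your proposal is correct and matches the paper's own proof: both obtain $\M'\subseteq\Ss(\tilde T)$ from Lemma~\ref{lem:internally-stable-are-stable}, then show any $M\in\Ss(\tilde T)$ neither blocks nor is blocked by members of $\M'$ (since all relevant edges lie in $\tilde T$ and both matchings are stable there), and conclude $M\in\overline{\M'}=\M'$. The exploratory detour via Lemma~\ref{symmetric_diff} is, as you note yourself, unnecessary.
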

\begin{proof}

${\cal M}'\subseteq {\cal S}(\widetilde T)$ follows by Lemma~\ref{lem:internally-stable-are-stable} and by the fact than internally closed sets are internally stable. Conversely, we claim that a matching $M \in {\cal S}(\widetilde{T})$ also belongs to ${\M'}$. Indeed, $M$ is not blocked by any matching from $\M' \subseteq \Ss(\widetilde{T})$ by definition of stability. Moreover, $M$ does not block any matching from ${\M'}\subseteq {\cal S}(\widetilde{T})$, again by definition of stability. So $M\in \overline{\M'}=\M'$, where the equality follows by definition of internally closed. 
\end{proof}

Note that the converse of Lemma~\ref{lem:from-internally-closed-to-edges} is, in general, not true: there are stable subtables $\widetilde T$ of $T$ such that ${\cal S}(\widetilde T)$ is not internally closed. Because of Lemma~\ref{lem:from-internally-closed-to-edges}, we can however succinctly represent any internally closed set of matchings $\M'$ via a subtable $\widetilde{T}$ of $T$  such that $\M'={\cal S}(\widetilde{T})$. Note that the function from subtables to internally closed sets of matching becomes one to one if we restrict to subtables $\cup\{M|M\in \mathcal{M}'\}$, for ${\cal M}'$ looping over all internally closed sets of matchings. Observe moreover that the representation of internally closed sets of matchings as sets of stable matchings  implies that they inherit structural and algorithmic properties of the latter. This mapping also implies that the number of internally closed sets of matchings is upper bounded by a singly exponential function in the number of the agents.

\paragraph{vNM Stability.} A set of matchings ${\M}'\subseteq {\M}(T)$ is called \emph{externally stable} (in $T$) if for each $M \in \M(T)\setminus {\cal M}'$, there exists $M' \in {\cal M}'$ that blocks $M$. A set that is both internally and externally stable is called \emph{vNM stable} (in $T$).  

\begin{lemma}\label{lem:vNM=IC+ES}
A set of matchings $\M'\subseteq \M(T)$ is vNM stable if and only if $\M'$ is internally closed and $\M'$ is externally stable.
\end{lemma}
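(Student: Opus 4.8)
The statement is an "if and only if" with the definitions almost lined up, so the main task is to reconcile the two notions of maximality: vNM stability packages internal stability together with external stability, whereas "internally closed" packages internal stability together with inclusionwise maximality among internally stable sets. The plan is to show that, for an internally stable set, being externally stable is equivalent to being inclusionwise maximal among internally stable sets; combining this with Observation~\ref{obs:subset} (which says that internally closed sets are exactly the inclusionwise maximal internally stable sets) gives the claim.

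For the forward direction, suppose $\M$ is vNM stable. Then $\M$ is internally stable by definition, so it remains to upgrade this to internally closed, i.e. to show $\overline{\M}=\M$. Take $M\in\overline{\M}$; by definition of the closure, $\{M\}\cup\M$ is internally stable, so in particular no matching of $\M$ blocks $M$. If $M\notin\M$, external stability of $\M$ would provide some $M'\in\M$ blocking $M$, a contradiction. Hence $M\in\M$, so $\overline{\M}=\M$ and $\M$ is internally closed; it is also externally stable by hypothesis.

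For the reverse direction, suppose $\M$ is internally closed and externally stable. Internal closedness gives internal stability, and combined with the assumed external stability this is exactly the definition of vNM stable. (No work is needed here beyond unpacking definitions.)

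The only genuinely substantive point — and thus the step I would be most careful about — is the forward direction's use of external stability to exclude matchings $M$ with $M\notin\M$ but $\{M\}\cup\M$ internally stable: one must check that "no $M'\in\M$ blocks $M$" combined with external stability really forces $M\in\M$, which it does since external stability says every matching outside $\M$ is blocked by something in $\M$. Everything else is bookkeeping against the definitions of closure, internal stability, and external stability recalled just above the statement. I would write the proof in three or four lines accordingly.
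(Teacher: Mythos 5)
Your proposal is correct and follows essentially the same route as the paper: the nontrivial direction is handled exactly as in the paper's proof, by noting that any $M\in\overline{\M}\setminus\M$ would be unblocked by $\M$, contradicting external stability, while the other direction is just unpacking definitions.
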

\begin{proof}
$(\Leftarrow)$ If $\M'$ is internally closed, then by definition $\M'$ is internally stable. Combined with the fact that $\M'$ is externally stable, $\M'$ is vNM stable.

$(\Rightarrow)$ By hypothesis, $\M'$ is internally and externally stable. Suppose by contradiction that there exists some matching $\widetilde{M}\in \M\setminus \M' $ such that $\widetilde{M}\cup \M'$ is internally stable. Hence, $\widetilde{M}$ is not blocked by any matching in $\M'$, contradicting external stability. Therefore $\M'$ is internally closed. 
\end{proof}

Lemma~\ref{lem:from-internally-closed-to-edges} and Lemma~\ref{lem:vNM=IC+ES} imply that for each vNM stable set $\M'$, we have $\M'=\Ss(\widetilde T)$, with $\widetilde T=\cup \{ M : M \in \M'\}$.

\subsection{The problems of interest}\label{sec:coNP}

In this paper, we study the following four problems. 
\smallskip

\noindent %
\noindent\fbox{\parbox[c]{1\textwidth - 2\fboxsep - 2\fboxrule}{%
\begin{center}
\textbf{Find an internally closed set containing a given internally stable set, Marriage Case \texttt{(IStoIC-MC)}}
\par\end{center}
\textbf{Given:} A marriage instance $T$ and a stable table $\widetilde{T}$ such that $\widetilde{T}\subseteq T$. \\
\textbf{Find:} $T'\subseteq T$ 
such that $\Ss(T')$ is an internal closure of $\Ss(\widetilde{T})$.
}}

\smallskip

\noindent %
\noindent\fbox{\parbox[c]{1\textwidth - 2\fboxsep - 2\fboxrule}{%
\begin{center}
\textbf{Check Internal Closedness($\widetilde{T}, T$) \texttt{(CIC)} }
\par\end{center}
\textbf{Given:} A roommate instance $T$ and a stable table $\widetilde{T}$ such that ${\widetilde{T}}\subseteq T$.\\
\textbf{Decide:} If ${\cal S}(\widetilde{T})$ is internally closed.  
}}

\smallskip 

\noindent %
\noindent\fbox{\parbox[c]{1\textwidth - 2\fboxsep - 2\fboxrule}{%
\begin{center}
\textbf{Check vNM Stability($\widetilde{T}, T$) \texttt{(CvNMS)}}
\par\end{center}
\textbf{Given:} A roommate instance $T$ and a stable table $\widetilde{T}$ such that ${\widetilde{T}}\subseteq T$.\\
\textbf{Decide:} If $\Ss(\widetilde{T})$ is vNM stable. %
}}

\smallskip 

\noindent %
\noindent\fbox{\parbox[c]{1\textwidth - 2\fboxsep - 2\fboxrule}{%
\begin{center}
\textbf{Find a vNM Stable Set($T$) \texttt{(FvNMS)}}
\par\end{center}
\textbf{Given:} A solvable roommate instance $T$.\\
\textbf{Find:} $T'\subseteq T$ such that $\Ss(T')$ is a vNM stable set, or conclude that no vNM stable set exists. %
}}

\smallskip

As discussed in the introduction, for complexity reasons, the input to the first three problems above contains a table $\widetilde{T}\subseteq T$, which implicitly describes the associated set of matchings ${\cal S}(\widetilde{T})$. Moreover, $\widetilde{T}$ is assumed to be stable. Those two hypothesis are motivated by Lemma~\ref{lem:from-internally-closed-to-edges}, which implies that each internally closed set is of the form ${\cal S}(T')$ for some stable table $T'$ with $T'\subseteq T$. 

Moreover, by definition, the internal closure of a internally stable set of matchings always exists, hence \texttt{IStoIC-MC} is a search problem.

\smallskip

\section{Internally closed sets: the marriage case}\label{sec:marriage}

A \emph{marriage} instance~\cite{gale1962college} is a special case of a roommate instance, satisfying the additional constraint that $A$ can be partitioned into two disjoint sets $X=\{x_1, x_2, \dots, x_p\}$ and $Y=\{y_1, y_2, \dots, y_q\}$, where the preference list of any agent in $X$ consists only of a subsets of agents in $Y$ (hence, by symmetry, the preference list of any agent in $Y$ consists only of a subset of agents in $X$). An agent $x \in X$ is called an \emph{$X$-agent}, and similarly $y \in Y$ is called a \emph{$Y$-agent}. We denote a marriage instance by $T(X\cup Y, >)$ or succinctly by $T$. All the other notations extend from the roommate setting. Here is an example of a preference table $T$ for a bipartite instance, with agents $X=\{x_1,x_2\}$, $Y=\{y_1,y_2\}$: 
\begin{center}
\begin{tabular}{||c|c||}
\hline 
Agents & Preference List \tabularnewline
\hline 
\hline 
$x_1$ & $y_1, y_2$ \tabularnewline
\hline 
$x_2$ & $y_2,y_1$ \tabularnewline
\hline 
$y_1$ & $x_2,x_1$ \tabularnewline
\hline 
$y_2$ & $x_2,x_1$ \tabularnewline
\hline 
\end{tabular}
\par\end{center}

The goal of this section is twofold: first, we give two alternative characterizations of internally closed sets of matchings in the marriage case (see Theorem \ref{thm:internally_closed_characterization}) -- one that is purely algebraic, and another one that relies on a generalization of the classical concept of rotations, which is also introduced in the current section. Then, we use the second characterization to show the following. 
\begin{theorem}\label{algo:marriage_IStoIC-MC_poly}
\texttt{IStoIC-MC} can be solved in time $O(n^4)$, where $n$ is the number of agents in the marriage instance in input.
\end{theorem}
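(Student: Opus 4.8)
The plan is to reduce the problem entirely to the language of rotations and generalized rotations, so that we never manipulate exponentially large sets of matchings directly. By Lemma~\ref{lem:from-internally-closed-to-edges}, any internally closed set of matchings is $\Ss(T')$ for a stable subtable $T'$, and by the standard rotation theory attributed to Conway and Irving, $\Ss(T')$ is encoded by its poset of rotations $\Pi(T')$, which is a subposet (in fact a ``closed'' sub-structure in an appropriate sense) of the rotation poset $\Pi(T)$ of the ambient instance. So the algorithm maintains a stable table, starting from the given $T^0$, and at each step consults the second characterization (Theorem~\ref{thm:internally_closed_characterization}): $\Ss(T')$ fails to be internally closed precisely when either some rotation of $\Pi(T')$ can be \emph{dissected} into two or more generalized rotations that are themselves realizable in $T$, or the poset can be \emph{vertically expanded} by a new minimal or maximal rotation from $T$. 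Each such operation corresponds to adding a concrete, polynomially-describable set of edges to the current table.

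First I would set up the data structures: compute the full rotation poset $\Pi(T)$ of the ambient instance (this is classical, $O(n^2)$ or $O(n^4)$ depending on implementation), compute the rotation poset of $T^0$, and verify $T^0$ is stable as promised. Then I would iterate the following loop. Given the current stable table $T'$, I check the two failure conditions of internal closedness. For vertical expansion, I look for a rotation $\rho$ in $\Pi(T) \setminus \Pi(T')$ that is minimal or maximal relative to $\Pi(T')$ (meaning it can be consistently prepended or appended), eliminate it on $T'$, and recompute. For dissection, for each rotation $\rho \in \Pi(T')$ I test whether, inside $T$, $\rho$ decomposes into a chain/antichain of generalized rotations whose product is $\rho$; if so I refine $T'$ by eliminating the smaller pieces instead. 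Each successful operation strictly enlarges $\Ss(T')$ (equivalently strictly enlarges the edge set of $T'$ or strictly refines $\Pi(T')$), and since the edge set is bounded by $n^2$ and each rotation involves $\Omega(1)$ edges, only $O(n^2)$ iterations can occur; each iteration costs $O(n^2)$ for the search plus $O(n^2)$ for recomputation, giving $O(n^4)$ overall. When neither operation applies, Theorem~\ref{thm:internally_closed_characterization} certifies that $\Ss(T')$ is internally closed, and since $\Ss(T')\supseteq \Ss(T^0)$ it is an internal closure of $\Ss(T^0)$, solving \texttt{IStoIC-MC}.

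The correctness of termination and output relies on the forward direction of the characterization theorem (if internally closed, neither operation applies) and the backward direction used contrapositively (if not internally closed, some operation applies and produces a strictly larger internally stable set that is still representable by stable edges in $T$) — both of which I may invoke from Theorem~\ref{thm:internally_closed_characterization}. One subtlety I would be careful about is that after a dissection or vertical expansion, the resulting edge set must again form a \emph{stable} table whose stable matchings are exactly the ones prescribed by the new poset; this is where the theory of generalized rotations does the real work, guaranteeing that the ``building blocks'' we add are exactly those that keep the structure a valid closed set of rotations. I would also need to argue that the search for a dissecting family of generalized rotations for a fixed $\rho$ can be carried out in polynomial time — this amounts to examining the cyclic structure of $\rho$ together with the preference lists in $T$ and is local, so $O(n^2)$ per rotation suffices.

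The main obstacle I anticipate is not the loop structure but establishing that the two local operations (dissection and vertical expansion) are \emph{complete}: that whenever $\Ss(T')$ is not internally closed, at least one of them applies. Equivalently, one must rule out the possibility that enlarging the internally stable set requires a ``global'' modification of the rotation poset that is neither a vertical extension nor a refinement of an existing rotation. This is exactly the content of the hard direction of Theorem~\ref{thm:internally_closed_characterization}, so in the algorithmic proof I would simply cite it; but if I had to prove it from scratch, I would argue that any matching $M$ that can be added to $\Ss(T')$ while preserving internal stability, together with $\Ss(T')$, generates (via joins and the opposition-of-interest property from the first characterization) a larger sublattice whose rotation poset differs from $\Pi(T')$ by exactly such local moves, using the distributive-lattice correspondence between join-closed families of matchings and posets of rotations. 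Handling the bookkeeping so that each iteration makes measurable progress toward inclusionwise maximality — and bounding the number of iterations by a polynomial — is the second point requiring care, resolved by the $O(n^2)$ bound on the number of stable edges.
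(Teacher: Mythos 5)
Your proposal follows essentially the same route as the paper: starting from $T^0$, iteratively apply rotation dissection and vertical expansion of the rotation poset (via generalized rotations exposed in the $Y$-optimal and $X$-optimal matchings), invoke the generalized-rotations characterization of Theorem~\ref{thm:internally_closed_characterization} for completeness of these two local operations, and bound the work by $O(n^2)$ edge-additions times $O(n^2)$ per \texttt{DR} call. The one bookkeeping point you wave at but should make explicit — and which the paper states — is the monotonicity that a rotation with no dissecting set in $(T,T',\rho)$ still has none in $(T,T'',\rho)$ for any larger stable table $T''$, so rotations need not be re-examined after later expansions.
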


\subsection{About the lattice of stable matchings}\label{sec:lattice-stable}

We start by discussing known features of the poset of stable matchings obtained via an associated partial order. For an extensive treatment of this topic, see~\cite{Irving}. Throughout this section, we fix a marriage instance $T$. Define the following domination relationship between matchings: $$M\succeq_X M' \hbox{ if, for every $x\in X$, $M(x)\geq_x M'(x)$}.$$ That is, all agents in $X$ weakly prefer $M'$ to $M$. If, in addition, there exists at least one agent $x\in X$ such that $M(x)>_x M'(x)$ (or, equivalently, if $M'\neq M$), then we write $M\succ_X M'$. We symmetrically define the relations $M\succeq_Y M'$ and $M\succ_Y M'$.

\begin{theorem}\label{thm:X-optimal-matching} Let $Z \in \{X,Y\}$. $({\cal S}(T),\succeq_Z)$ forms a distributive lattice. Its greatest element is denoted by $M^T_Z$ (or simply $M_Z$) and called the \emph{$Z$-optimal} stable matching. $({\cal S}(T),\succeq_Z)$ satisfies the following two properties, defined next: it is closed under the join and meet operators, and it satisfies the opposition of interests. $({\cal S}(T),\succeq_X)$ is called the \emph{lattice of stable matchings} \end{theorem}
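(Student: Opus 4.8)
The plan is to follow the classical construction attributed to Conway: exhibit explicit join and meet operations on $\Ss(T)$ and verify that they are well defined, land back in $\Ss(T)$, and induce the order $\preceq_Z$. Fix $Z=X$; the case $Z=Y$ is symmetric, swapping the roles of the two sides. For $M,M'\in\Ss(T)$ I would define $M\vee M'$ by letting, for each $x\in X$, $(M\vee M')(x)$ be the better of $M(x)$ and $M'(x)$ according to $>_x$ (and $\emptyset$ if both are $\emptyset$), and define $M\wedge M'$ dually with the worse partner. By Lemma~\ref{symmetric_diff}(1), stable matchings do not block each other, so the agents matched in $M$ are exactly those matched in $M'$, and these assignments are unambiguous. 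Everything then hinges on two facts, both drawn from Lemma~\ref{symmetric_diff}(2) (``no irregular edge''): (i) along every edge $xy\in M\triangle M'$ the two endpoints have strictly opposite preferences between $M$ and $M'$; and (ii) consequently $M\vee M'$ assigns to each $y\in Y$ the \emph{worse} of $M(y)$ and $M'(y)$, and $M\wedge M'$ the better.

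First I would establish that $M\vee M'$ is a matching: if distinct $x_1,x_2$ were both assigned $y$, then $y$ has different partners in $M$ and $M'$, so tracing the two edges $x_1y$ and $x_2y$ through $M\triangle M'$ and applying (i) yields both $x_1>_y x_2$ and $x_2>_y x_1$, a contradiction. Fact (ii) I would get from a short case split using (i): for $y$ with $M(y)=x\neq x'=M'(y)$, whichever of $x$ and $y$ strictly prefers $M'$ forces the matching $M\vee M'$ to assign $y$ to the $X$-agent for whom $y$ is the worse of the two options, which is precisely $\min_{>_y}(x,x')$. Stability of $M\vee M'$ is then immediate: a blocking pair $xy$ would satisfy $y>_x(M\vee M')(x)\geq_x M(x)$, hence $y>_x M(x)$, and $x>_y(M\vee M')(y)=\min_{>_y}(M(y),M'(y))$, hence $x>_y M(y)$ or $x>_y M'(y)$; either way $xy$ blocks a stable matching, contradiction. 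The argument for $M\wedge M'$ is symmetric.

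The order-theoretic conclusions are then routine. Since $(M\vee M')(x)\geq_x M(x),M'(x)$ for every $x$, we get $M,M'\preceq_X M\vee M'$; and if $N\in\Ss(T)$ dominates both, then $N(x)\geq_x\max_{>_x}(M(x),M'(x))=(M\vee M')(x)$ for all $x$, so $M\vee M'\preceq_X N$. Thus $M\vee M'$ is the join and, dually, $M\wedge M'$ the meet, so $(\Ss(T),\preceq_X)$ is a lattice, nonempty by Gale--Shapley and finite, hence with a greatest element $M_X:=\bigvee_{M\in\Ss(T)}M$, which by construction assigns each $x$ its best stable partner --- the $X$-optimal stable matching. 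Distributivity follows because the operations are computed coordinatewise by $\max/\min$ in the chains $(\{M(x):M\in\Ss(T)\},>_x)$, so $\Ss(T)$ is a sublattice of a product of chains and therefore distributive. Finally, the two named properties are exactly what we have proved: closure of $\Ss(T)$ under the coordinatewise join and meet is the content of the previous two paragraphs, and the opposition of interests --- $M\preceq_X M'$ iff $M'\preceq_Y M$ --- follows from (i), since if all of $X$ weakly prefers $M'$ then each $y$ either keeps the same partner or meets its $M$-edge inside $M\triangle M'$ with the $X$-endpoint strictly preferring $M'$, forcing $y$ to strictly prefer $M$.

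The hard part is fact (ii) --- that the coordinatewise maximum read off from the $X$-side coincides with the coordinatewise minimum from the $Y$-side --- together with the matching property of $M\vee M'$; these are the only steps that genuinely use the structure of Lemma~\ref{symmetric_diff} rather than mere order manipulation. Once (i) and (ii) are in hand, well-definedness, stability, the lattice axioms, distributivity, existence of $M_X$, and both named properties all follow mechanically.
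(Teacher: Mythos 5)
Your proof is correct. The paper itself states Theorem~\ref{thm:X-optimal-matching} without proof, as a classical result (Conway's lattice theorem, treated in full in the Gusfield--Irving monograph it cites), so there is no in-paper argument to compare against; what you have written is precisely the standard proof, and all the steps check out. In particular, the hypothesis of Lemma~\ref{symmetric_diff} is satisfied because a stable matching is blocked by no edge whatsoever, hence two stable matchings never block each other; from there your facts (i) and (ii), the matching property of $M\vee M'$, stability, the coordinatewise $\max/\min$ description yielding distributivity via embedding into a product of chains, and the derivation of opposition of interests are all sound and are exactly how the cited source proves the theorem.
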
 

\paragraph{The join and meet operators.}
Given two matchings $M, M'$, we let their \emph{join} be $$M \lor M'=\{xy : x \in X, y \in \{M(x), {M'}(x)\} : y \geq_x M(x), y \geq_x {M'}(x)\},$$ i.e., every agent in $X$ is paired with their better partner among $M$ and $M'$, and their \emph{meet} be $$M \land M'=\{xy : y \in Y, x \in \{M(y), {M'}(y)\}: x \geq_y M(y), x \geq_y {M'}(y)\},$$ i.e., every agent in $Y$ is paired with their better partner among $M$ and $M'$. Note that, in general, the join (resp., meet) of two matchings is not a matching. 
A set of matchings ${\cal M}'\subseteq \M(T)$ is \emph{closed} wrt the join (resp., meet) operator if, for every $M,M' \in \M'$, we have that $M\lor M' \in {\cal M}'$ (resp., $M\land M' \in {\cal M}')$. 

\paragraph{Opposition of interests.}
A set of matchings ${\cal M}'\subseteq \M(T)$ satisfies the \emph{opposition of interests} if for any $M,M'\in \M'$, $M\succeq_X M'$ if and only if $M'\succeq_Y M$.


\subsection{Classical and generalized rotations}\label{sec:rotations}

\subsubsection{Classical rotations and properties}
Fix again a marriage instance $T$. We sum up here definitions and known facts about the classical concept of rotations. See again~\cite{Irving} for extended discussions and proofs.

\begin{definition}\label{def:classical_rotations}
Let $M\in \Ss(T)$. Given distinct $X$-agents $x_0, \dots, x_{r-1}\in X$ and $Y$-agents $y_0, \dots, y_{r-1}\in Y$, we call the finite sequence of ordered pairs \begin{equation}\label{eq:rotation-marriage}\rho= (x_0, y_0), (x_1, y_1), \dots, (x_{r-1}, y_{r-1})\end{equation} a (\emph{classical}) \emph{X-rotation exposed in $M$} if, for every $i \in [r-1]_0$:
\begin{enumerate}[label=\alph*)]
    \item $x_iy_i\in M$;
    \item $x_i>_{y_{i+1}} x_{i+1}$;
    \item $y_{i} >_{x_i} y_{i+1}$;
    \item $M(y) >_y x_i$ for all $y \in Y$ such that $x_iy \in T$ and $y_i >_{x_i} y >_{x_i} y_{i+1}$.
\end{enumerate}
where all indices are taken modulo $r$. We often omit ``$X-$'' and call $\rho$ simply a rotation.
\end{definition}

Note that conditions a)-b)-c) only depends on edges $x_i y_i$ and $x_{i}y_{i+1}$. We call them therefore \emph{basic conditions}. Condition d) depend on other edges from the table $T$, and we therefore call it \emph{$T$-dependent condition.}


Note that an $X$-rotation (and a generalized $X$-rotation, to be introduced in Section~\ref{sec:generalized-rotations}) with $r$ elements is equivalent up to a constant shift (modulo $r$) of all indices of its pairs. Hence, we will always assume that indices in entries of a (generalized) $X$-rotation are taken modulo $r$. 

\begin{figure}[htbp]\centering
\def\sym#1{\ifmmode^{#1}\else\(^{#1}\)\fi}
\begin{minipage}[c]{.35\textwidth}
\begin{center}
$T=$ \, \begin{tabular}{||c|c||}
\hline 
Agent  & Preference list \tabularnewline
\hline 
$x_1$  & $y_4,y_1,y_3$ \tabularnewline
\hline 
$y_1$  & $x_2,x_1,x_4$ \tabularnewline
\hline 
$x_2$ & $y_2,y_1$\tabularnewline
\hline 
$y_2$ & $x_3,x_2$\tabularnewline
\hline 
$x_3$ & $y_3,y_2$\tabularnewline
\hline 
$y_3$ & $x_1,x_3$\tabularnewline
\hline 
$x_4$ & $y_1,y_4$\tabularnewline
\hline 
$y_4$ & $x_4,x_1$\tabularnewline
\hline 
\end{tabular}
\end{center}
\end{minipage}\begin{minipage}[c]{.3\textwidth}
\centering
\begin{tikzpicture}[node distance={15mm}, thick, main/.style = {draw, circle}]
\node[main] (1) {$x_1$}; 
\node[main] (2) [left of=1] {$y_4$}; 
\node[main] (3) [below right of=1] {$y_3$};
\node[main] (4) [below left of=2] {$x_4$};
\node[main] (5) [below of=4] {$y_1$};
\node[main] (6) [below of=3] {$x_3$};
\node[main] (7) [below left of=6] {$y_2$};
\node[main] (8) [left of=7] {$x_2$};
\draw[->] (2) -- (1); 
\draw[->] (4) -- (2); 
\draw[->] (1) -- (3);
\draw[->] (1) -- (5);
\draw[->] (5) -- (4);
\draw[->] (3) -- (6);
\draw[->] (6) -- (7);
\draw[->] (7) -- (8);
\draw[->] (8) -- (5);
\end{tikzpicture}
\end{minipage}\begin{minipage}[c]{.35\textwidth}
\begin{center}$T'=$ \, \begin{tabular}{||c|c||}
\hline 
Agent  & Preference list \tabularnewline
\hline 
$x_1$  & $y_4,y_3$ \tabularnewline
\hline 
$y_1$  & $x_2,x_4$ \tabularnewline
\hline 
$x_2$ & $y_2,y_1$\tabularnewline
\hline 
$y_2$ & $x_3,x_2$\tabularnewline
\hline 
$x_3$ & $y_3,y_2$\tabularnewline
\hline 
$y_3$ & $x_1,x_3$\tabularnewline
\hline 
$x_4$ & $y_1,y_4$\tabularnewline
\hline 
$y_4$ & $x_4,x_1$\tabularnewline
\hline 
\end{tabular}
\end{center}
\end{minipage}
\caption{Illustration from Example~\ref{exp:rotations}, Example~\ref{exp:gen_rotations}, and Example~\ref{exp:gen_rotations-2}. On the left: Table $T$. In the center: the graph $D_X(M)$. On the right: Table $T'$.}
\label{fig:exp:gen_rotations}
\end{figure}

\begin{example}\label{exp:rotations}
Consider the marriage instance $T$ in Figure \ref{fig:exp:gen_rotations}, left, and its stable matching $M=\{x_1y_4,x_2y_2,x_3y_3,x_4y_1\}$. One can easily check that $\rho=(x_1, y_4), (x_4, y_1)$ satisfies properties a)-b)-c)-d) from Definition \ref{def:classical_rotations}, hence it is an $X$-rotation exposed in $M$.
\end{example}

We abuse notation and write $x \in \rho$ (resp.~$y \in \rho$) if $(x,y) \in \rho$ for some $y$ (resp., for some $x$). The \emph{elimination} of an $X$-rotation $\rho$ exposed in a stable matching $M$ maps $M$ to the matching $M'\coloneqq M/\rho$ where $M(x)={M'}(x)$ for $x\in X\setminus \rho$ and ${M'}(x_i)=y_{i+1}$ for all $i \in [r-1]_0$. Intuitively, the matching $M'=M/\rho$ differs from $M$ by a cyclic shift of each $X$-agent in $\rho$ to the partner in $M$ of the next X-agent in $\rho$. Moreover, $M$ \emph{immediately precedes $M'$} in the lattice of stable matchings of $T$. That is, $M\succ_X M'$ and there is no $M'' \in {\cal S}(T)$ such that $M\succ_X M'' \succ_X M'$. 

\begin{theorem}\label{thm:rotations}
There exists exactly one set of classical $X$-rotations $R_X=\{\rho_1,\rho_2,\dots,\rho_q\}$ such that, for $i \in [q]$, $\rho_i$ is exposed in $(M/\rho_1)/\rho_2)\dots \rho_{i-1}$, and  
$$M_Y=M_X/R_X=((M_X / \rho_1) / \rho_2) / \dots ) / \rho_q.$$  Moreover, $R_X$ is exactly the set of all X-rotations exposed in some stable matching of $T$.
\end{theorem}


Extending the definition from the previous theorem, for $R=\{\rho_1,\rho_2,\dots,\rho_k\}\subseteq R_X$, such that, for $i \in [k]$, $\rho_i$ is exposed in $(M/\rho_1)/\rho_2)\dots \rho_{i-1}$, we let $$M/R\coloneqq((M / \rho_1) / \rho_2) / \dots ) / \rho_k.$$

Define the poset $(R_X,\sqsubseteq)$ as follows: $\rho'\sqsubseteq \rho$ if for any sequence of $X$-rotation eliminations $M_X/\rho_0/ \rho_1/\dots/\rho_k$ with $\rho=\rho_k$, we have $\rho' \in \{\rho_0,\rho_1,\dots,\rho_k\}$. If moreover $\rho'\neq \rho$, we write $\rho'\sqsubset \rho$. 

A set $R\subseteq R_X$ is called \emph{closed} if $$\rho \in R, \rho' \in R_X(T) : \rho' \sqsubseteq \rho \, \Rightarrow \, \rho' \in R.$$
For $\rho \in R_X$, we let $R(\rho)=\{ \rho' \in R_X: \rho' \sqsubset \rho\}$. Note that $R(\rho)$ is closed and does not include $\rho$.

The following extension of Theorem~\ref{thm:rotations} can be seen as a specialization of Birkhoff's representation theorem~\cite{birkhoff1937rings} to the lattice of stable matchings. 
\begin{theorem}\label{thm:closed-rotation}
The following map defines a bijection between closed sets of rotations and stable matchings:
$$ R \subseteq R_X, \ R \hbox{ closed } \ \rightarrow \ M/R.$$
\end{theorem}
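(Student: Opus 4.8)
The statement in question is Theorem~\ref{thm:closed-rotation}: a Birkhoff-style bijection between closed sets of rotations $R \subseteq R_X(T)$ and stable matchings $M$, via $R \mapsto M_X/R$. Here is how I would organize the argument.

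\medskip

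\textbf{Plan.} The plan is to prove the map is well-defined, injective, and surjective, in that order, leaning heavily on Theorem~\ref{thm:rotations} and the structure of the lattice $(\mathcal{S}(T), \preceq_X)$. First I would establish that for \emph{any} closed set $R$, the elimination $M_X/R$ is well-defined: one must show that if $R$ is closed, then the rotations in $R$ can be eliminated from $M_X$ in \emph{some} order, and that the resulting matching does not depend on the order chosen. The order-independence is the crucial lemma; the standard route is to show that if $\rho_1, \rho_2$ are both exposed in the same stable matching $M$ (they are ``incomparable'' at $M$), then eliminating them in either order yields the same matching $M/\{\rho_1,\rho_2\}$, because they act on disjoint sets of $X$-agents. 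Then one argues by a diamond/confluence (Newman's lemma-style) argument that any two maximal elimination sequences using exactly the rotations of $R$ terminate at the same matching. That a valid elimination sequence exists at all uses closedness: at any intermediate matching $M/R'$ with $R' \subsetneq R$ closed, some $\rho \in R \setminus R'$ is minimal in $R \setminus R'$ with respect to $\sqsubseteq$, and minimality plus closedness of $R' \cup R(\rho) = R'$ forces $\rho$ to be exposed in $M/R'$ (this is essentially the defining property of $\sqsubseteq$ together with Theorem~\ref{thm:rotations}).

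\medskip

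\textbf{Injectivity and surjectivity.} For injectivity, I would show that the closed set $R$ can be recovered from the matching $M := M_X/R$ as the set of all rotations eliminated along any path from $M_X$ to $M$ in the lattice; equivalently, $\rho \in R$ iff $\rho$ is eliminated before $M$ is reached, which by the poset definition of $\sqsubseteq$ is forced. Concretely: if $R \ne R'$ are closed but $M_X/R = M_X/R' = M$, take $\rho$ minimal in the symmetric difference, say $\rho \in R \setminus R'$; then $R(\rho) \subseteq R \cap R' $ and one can eliminate $R(\rho)$, then $\rho$, reaching a matching $M'' \prec_X$ (something) that must lie weakly below $M$ in one computation but strictly above in the other — contradicting that $\rho$ strictly decreases the $X$-preference of some agent while no rotation of $R'$ below the relevant point does. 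For surjectivity, given a stable matching $M$, Theorem~\ref{thm:rotations} applied suitably (or rather the fact that $M_X \succeq_X M$ and the lattice is graded by rotation eliminations) gives an elimination sequence $M_X / \rho_1 / \dots / \rho_k = M$; let $R = \{\rho_1, \dots, \rho_k\}$. One then checks $R$ is closed: if $\rho' \sqsubseteq \rho_i$ for some $i$, then by the definition of $\sqsubseteq$, $\rho'$ must appear before $\rho_i$ in \emph{every} elimination sequence containing both, in particular it must appear in this one (after possibly extending it), forcing $\rho' \in R$. The equality $M_X/R = M$ is then immediate from order-independence established in the first part.

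\medskip

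\textbf{Main obstacle.} The technical heart is the order-independence of $M_X/R$ — i.e., that eliminating the rotations of a closed set $R$ gives a well-defined matching regardless of the sequence. This requires the local commutation of simultaneously-exposed rotations plus a confluence argument over the (graded, finite) lattice; it is the only place where real work beyond bookkeeping is needed, and everything else (injectivity via recovering $R$, surjectivity via Theorem~\ref{thm:rotations}) follows relatively formally once this is in hand. Since the paper explicitly frames this as a specialization of Birkhoff's representation theorem~\cite{birkhoff1937rings}, an alternative and cleaner route is to cite that $(\mathcal{S}(T), \preceq_X)$ is a finite distributive lattice (Theorem~\ref{thm:X-optimal-matching}), observe that its poset of join-irreducibles is exactly $(R_X(T), \sqsubseteq)$ with a join-irreducible $M$ corresponding to the rotation whose elimination produces it, and then invoke Birkhoff directly; the bijection $R \mapsto M_X/R$ is then the standard ``order ideal $\mapsto$ join of its elements'' correspondence read in the dual order. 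I would present the Birkhoff-based proof as the main line and relegate the explicit elimination-order lemma to a remark or to the cited literature on rotations~\cite{Irving}.
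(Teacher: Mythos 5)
The paper offers no proof of this statement: it is quoted as a known fact, framed as a specialization of Birkhoff's representation theorem and attributed to the literature on rotations~\cite{Irving,birkhoff1937rings}. Your outline is essentially the standard argument from that reference --- order-independence of eliminating a closed set (via disjointness of simultaneously exposed rotations plus confluence), then injectivity by recovering $R$ from $M_X/R$, then surjectivity via Theorem~\ref{thm:rotations} --- and your alternative of invoking Birkhoff directly after identifying the join-irreducibles of $({\cal S}(T),\preceq_X)$ with $(R_X(T),\sqsubseteq)$ is exactly the viewpoint the paper itself adopts. The approach is sound and consistent with how the paper treats the result, though your injectivity sketch is the one place that would need to be tightened into a full argument if the proof were to be written out rather than cited.
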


When we want to stress the instance $T$ we use to build the poset of X-rotation, we denote the set of all X-rotations by $R_X(T)$. 



\paragraph{Partition of the stable subtable.} Recall that the stable subtable $E_S(T)$ of $T$ (defined in Section~\ref{sec:prelim-first}) contains all and only the edges that are in some matching $M \in {\cal S}(T)$. $E_S(T)$ can be partitioned as follows.

\begin{theorem}\label{thm:decomposition-stable-subtable}
$E_S(T)=M_Y \uplus \{xy|(x,y) \in \rho \text{ for some } \rho\in R_X(T)\}$.
\end{theorem} 

\subsubsection{Generalized $X$-rotations and properties}\label{sec:generalized-rotations}  

Fix again a marriage instance $T$. We now introduce an extension of the classical concept of $X$-rotation, which we call \emph{generalized $X$-rotation}, and define its associated digraph. 

\begin{definition}\label{def:generalized_rotations}
Let $M\in \M(T)$. Given distinct $X$-agents $x_0, \dots, x_{r-1}$ and $Y$-agents $y_0, \dots, y_{r-1}$, we call the finite sequence of ordered pairs \begin{equation}\label{eq:generalized-rotation}\rho^g= (x_0, y_0), (x_1, y_1), \dots, (x_{r-1}, y_{r-1})\end{equation} a \emph{generalized X-rotation exposed in $M$} if, for every $i \in [r-1]_0$, it satisfies properties a)-b)-c) from Definition~\ref{def:classical_rotations} (but not necessarily d).  We often omit ``$X-$'' and call $\rho^g$ simply a generalized rotation.
\end{definition}

To distinguish from classical $X$-rotations, we use the superscript $g$ for all generalized $X$-rotations throughout the paper. Note that a classical $X$-rotation exposed in a matching $M$ is also a generalized rotation exposed in $M$. As we did for classical $X$-rotations, we again write $x \in \rho^g$ (resp.~$y \in \rho^g$) if $(x,y) \in \rho^g$ for some $y$ (resp., for some $x$). The \emph{elimination} of a generalized $X$-rotation, or of a set of generalized $X$-rotations, is defined analogously to the classical rotation case. among the generalized rotation $\rho^g$ exposed at a matching $M$, we still have $M \succ_X M/\rho^g $.

For a generalized $X$-rotation $\rho^g$ as in~\eqref{eq:generalized-rotation}, we let  $E(\rho^g)=\{x_iy_i\}_{i \in [r-1]_0}\cup \{x_iy_{i+1}\}_{i \in [r-1]_0}$, and often interpret $E(\rho^g)$ as a subtable of $T$. Hence, $T\cup E(\rho^g)$ is a well-defined table. We also write $T \cup \rho^g$, interpreting $\rho^g$ as a subtable with edges $\{x_iy_i\}_{i \in [r-1]_0}$. We extend this notation to a set $R=\{\rho^g_1,\dots,\rho^g_k\}$ of generalized rotations, letting $T \cup R = T \cup_{j=1}^k \rho_i^g$ and $T\cup E(R) = T \cup_{i=1}^k E(\rho_i^g)$.

\paragraph{The generalized rotation digraphs.} We define the following generalized X-rotation digraph for a matching $M$ of $T$, denoted as $D_X(M,T)$ or simply $D_X(M)$ when $T$ is clear from the context. The set of nodes is given by $X\cup Y$.
For any agents $x \in X, y \in Y$, add arc $(x,y)$ if $x >_{y} M(y)$ and and $M(x) >_x y$; add arc and $(y,x)$ if $M(y)=x$. Note that the outdegree of each $X$-agent can be larger than $1$, but the outdegree of every $Y$-agent is at most $1$. The next lemma follows directly from the  definition of $D_X(M)$ and it is similar to a known statement for classical rotations, see, e.g.,~\cite{Irving}.

\begin{lemma}\label{lem:generalized-rotation-digraph} Let $M$ be a matching. $x_0\rightarrow y_1\rightarrow x_1\rightarrow\dots\rightarrow x_{r-1}\rightarrow y_{0}\rightarrow x_0$ is a cycle in $D_X(M)$ if and only if $\rho^g = (x_0, y_0), (x_1, y_1),\dots, (x_{r-1}, y_{r-1})$ is a generalized $X$-rotation exposed in $M$. We say that $\rho^g$ and the cycle \emph{correspond} to each other.\end{lemma}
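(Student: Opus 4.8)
The statement is an equivalence between a combinatorial object (a directed cycle in $D_X(M)$) and a definitional object (a generalized $X$-rotation exposed in $M$, i.e., a sequence satisfying conditions a)--c) of Definition~\ref{def:classical_rotations}). The plan is to prove it by directly translating the defining arc conditions of $D_X(M)$ into properties a)--c), matching indices carefully. The content is essentially an unpacking of definitions, so the only delicate point is index bookkeeping; no appeal to stability or to the lattice structure is needed, consistent with the fact that both $D_X(M)$ and generalized rotations are defined for an arbitrary matching $M \in \M(T)$.

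The key observation driving the proof is that the cycle alternates between two kinds of arcs: each $Y$-to-$X$ arc $y_i \rightarrow x_i$ and each $X$-to-$Y$ arc $x_i \rightarrow y_{i+1}$ (indices mod $k$). I would begin by recording what each arc type encodes. By definition of $D_X(M)$, the arc $y_i \rightarrow x_i$ exists precisely when $M(y_i) = x_i$, equivalently $x_i y_i \in M$, and hence $M(x_i) = y_i$; this is exactly property a). Likewise, the arc $x_i \rightarrow y_{i+1}$ exists precisely when $x_i >_{y_{i+1}} M(y_{i+1})$ and $M(x_i) >_{x_i} y_{i+1}$.

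For the forward direction, starting from the cycle I would read off $x_i y_i \in M$ from every $Y$-to-$X$ arc, giving property a) and in particular $M(y_{i+1}) = x_{i+1}$ and $M(x_i) = y_i$. Substituting these into the two inequalities encoding the arc $x_i \rightarrow y_{i+1}$ yields $x_i >_{y_{i+1}} x_{i+1}$ (property b) and $y_i >_{x_i} y_{i+1}$ (property c). Since a cycle has distinct vertices, the agents $x_0, \dots, x_{k-1}$ and $y_0, \dots, y_{k-1}$ are distinct, so $\rho^g$ has the required form, and, satisfying a)--c) for every $i \in [k-1]_0$, it is a generalized $X$-rotation exposed in $M$. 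The converse runs the same equivalences in reverse: from a)--c) I would recover $M(y_i) = x_i$ (so the arc $y_i \rightarrow x_i$ is present) and, using $M(y_{i+1}) = x_{i+1}$ and $M(x_i) = y_i$, rewrite b) and c) as the two inequalities defining the arc $x_i \rightarrow y_{i+1}$. A minor point worth spelling out is that b) and c) implicitly guarantee $x_i y_{i+1} \in T$ (those comparisons are only meaningful when $x_i$ and $y_{i+1}$ are mutually acceptable), so this arc genuinely exists in $D_X(M)$; chaining the arcs then produces the desired cycle.

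The main obstacle is purely notational: keeping the shift $x_i \rightarrow y_{i+1}$ consistent with the convention that indices are read modulo $k$, and being careful that each property-a) pair $x_i y_i$ determines both $M(x_i) = y_i$ and $M(y_i) = x_i$, so that the substitutions are legitimate in both directions. Beyond this index-matching, there is no structural difficulty, and the proof reduces to verifying the two-way dictionary between arcs and the basic conditions a)--c).
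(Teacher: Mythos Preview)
Your proposal is correct and matches the paper's approach: the paper states that the lemma ``follows directly from the definition of $D_X(M)$,'' and your argument is precisely that direct unpacking, translating each arc type into the corresponding basic condition a)--c) and back.
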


Classical and generalized $Y$-rotations are defined similarly to classical and generalized $X$-rotations, with the role of agents in $X$ and $Y$ swapped. By symmetry, all definitions and properties carry over.

\begin{example}\label{exp:gen_rotations}
Consider again the instance $T$ from Example \ref{exp:rotations} and define matching $$M=\{x_1y_4,x_2y_2,x_3y_3,x_4y_1\}.$$ The generalized rotation digraph $D_X(M,T)$ is given in Figure~\ref{fig:exp:gen_rotations}, center.

$P=x_1\rightarrow y_3\rightarrow x_3 \rightarrow y_2 \rightarrow x_2 \rightarrow y_1 \rightarrow x_4 \rightarrow y_4 \rightarrow x_1$ is a cycle in $D_X(M)$. Cycle $P$ corresponds to a generalized X-rotation $\rho^g=(x_1, y_4), (x_3,y_3), (x_2,y_2), (x_4,y_1)$, which is exposed in $M$ (wrt table $T$). One can check that $\rho^g$ satisfies properties a)-b)-c) but not d) from Definition \ref{def:classical_rotations}. \end{example}


\subsection{Two characterizations of internally closed sets of matchings}\label{sec:alternative-char}

We next state two alternative characterizations of internally closed sets of matchings. The first characterization pertains the lattice of matchings: we show that a set of matchings is internally closed set if and only if it is inclusionwise maximal among the sets of matchings that share with $(\Ss(T),\succ_X)$ the two properties, closedness wrt to the join operator, and opposition of interests. The second characterization is based on rotations: we show that a set of matchings is internally closed if and only if the corresponding poset of rotations cannot be ``augmented'' in a well-defined manner. We prove the first characterization in Subsection \ref{sec:IC_algebraic} and the rotation-based characterization in Subsection \ref{sec:IC_algorithmic}.

\begin{theorem}\label{thm:internally_closed_characterization}
Let $T$ be a marriage instance and ${\cal M}'\subseteq {\cal M}(T)$. The following are equivalent.
\begin{enumerate}
    \item ${\cal M}'$ is internally closed.
    \item (Algebraic characterization) ${\cal M}'$ is inclusionwise maximal among the subsets of $\M(T)$ that: \begin{enumerate}[label=\alph*)]
        \item are closed wrt the the join operator;
        \item satisfy the opposition of interests.
    \end{enumerate}
    \item (Generalized rotations characterization) ${\cal M}'={\cal S}(T')$ for a stable subtable $T' \subseteq T$ such that the following holds:
       \begin{enumerate}[label=\alph*)]\item $D_X(M^{T'}_Y,T)$ and $D_Y(M^{T'}_X,T)$ have no cycle. 
    \item \texttt{DR}$(T,T',\rho)$ returns $\{\rho\}$ for all $ \rho \in R_X(T')$.
\end{enumerate}
\end{enumerate}
\end{theorem}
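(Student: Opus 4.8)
\textbf{Proof plan for Theorem~\ref{thm:internally_closed_characterization}.}

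The plan is to prove the chain of equivalences $1 \Leftrightarrow 2$ and $1 \Leftrightarrow 3$ separately, with Lemma~\ref{lem:from-internally-closed-to-edges} doing the initial bookkeeping: by that lemma, every internally closed $\mathcal{M}'$ equals $\mathcal{S}(\tilde T)$ for the stable subtable $\tilde T = \cup\{M : M \in \mathcal{M}'\}$, so in all three statements we may as well work with a stable subtable $T' \subseteq T$ and set $\mathcal{M}' = \mathcal{S}(T')$; the content is to characterize when this set is inclusionwise maximal internally stable.

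For $1 \Leftrightarrow 2$ (the algebraic characterization), first I would show that any internally stable $\mathcal{M}'$ is contained in a set satisfying (a) and (b): by Lemma~\ref{lem:internally-stable-are-stable}, $\mathcal{M}' \subseteq \mathcal{S}(\tilde T)$, and the lattice of stable matchings $\mathcal{S}(\tilde T)$ is closed under join and satisfies opposition of interests by Theorem~\ref{thm:X-optimal-matching}; this already forces any internally closed $\mathcal{M}'$ to itself be a lattice of stable matchings of some subtable, hence to satisfy (a),(b). For the converse direction I would use Lemma~\ref{symmetric_diff}: if $\mathcal{M}'$ is closed under join and satisfies opposition of interests, I want to show it is internally stable, i.e.\ no $M \in \mathcal{M}'$ blocks $M'' \in \mathcal{M}'$. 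The key observation is that closure under join together with opposition of interests should imply $M \lor M''$ and $M \land M''$ lie in $\mathcal{M}'$ (the meet-closure being derived from join-closure plus opposition of interests), and then a standard argument — if $xy \in M$ blocked $M''$, then $x$ strictly prefers $M$ over $M''$ and $y$ strictly prefers $M$ over $M''$, producing an irregular edge or a contradiction with the lattice structure of $\{M, M'', M\lor M'', M \land M''\}$ — rules out blocking. Combining: internally closed $=$ inclusionwise maximal internally stable $=$ inclusionwise maximal among sets satisfying (a),(b), since the two families of "candidate supersets" coincide.

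For $1 \Leftrightarrow 3$ (the generalized rotations characterization), the heart of the matter is to translate "$\mathcal{S}(T')$ is inclusionwise maximal internally stable" into the local combinatorial conditions (a) no cycle in $D_X(M^{T'}_Y, T')$ or $D_Y(M^{T'}_X, T')$, and (b) the dissection routine \texttt{DR} leaves every rotation intact. The direction "$1 \Rightarrow 3$" is the contrapositive: if there is a cycle in $D_X(M^{T'}_Y, T')$, it corresponds (Lemma~\ref{lem:generalized-rotation-digraph}) to a generalized $X$-rotation $\rho^g$ exposed in $M^{T'}_Y$ whose elimination yields a new matching below $M_Y$ w.r.t.\ $\succeq_X$; one shows (using the symmetric-difference structure of Lemma~\ref{symmetric_diff} and the construction in Example~\ref{exp:gen_rotations}) that $T' \cup E(\rho^g)$ is a stable subtable strictly larger than $T'$ whose stable matchings still form an internally stable set, contradicting maximality — and symmetrically for $D_Y(M^{T'}_X, T')$, giving "vertical expansion." Likewise, if \texttt{DR}$(T, T', \rho)$ dissects some $\rho \in R_X(T')$ into two or more rotations, this produces a strictly larger stable subtable inside $T$ whose rotation poset refines that of $T'$ while staying internally stable. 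The direction "$3 \Rightarrow 1$" requires showing these are the \emph{only} ways to enlarge: given any internally stable $\mathcal{M}'' \supsetneq \mathcal{S}(T')$, let $T'' = \cup\{M : M \in \mathcal{M}''\} \supsetneq T'$; the new edges of $T''$, via the decomposition~\eqref{eq:stable-subtable-decomposition} of $E_S(T'')$ into $M_Y$ and rotation edges, must either change the $Y$-optimal (resp.\ $X$-optimal) matching — exhibiting a cycle in one of the two generalized rotation digraphs — or split an existing rotation of $T'$ into finer rotations — which \texttt{DR} would detect. I expect this last step to be the main obstacle: it requires a careful analysis of how adding edges to a stable table perturbs its rotation poset, ensuring that \emph{any} strict enlargement preserving internal stability is witnessed by one of the two listed local moves, and this is where the properties of generalized rotations and the precise specification of \texttt{DR} (deferred to Subsection~\ref{sec:IC_algorithmic}) must be invoked in full. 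The proof is therefore split as announced, with $1 \Leftrightarrow 2$ in Subsection~\ref{sec:IC_algebraic} and $1 \Leftrightarrow 3$ in Subsection~\ref{sec:IC_algorithmic}.
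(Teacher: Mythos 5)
Your overall architecture matches the paper's: the equivalence $1\Leftrightarrow 2$ is proved exactly as you describe (the paper's Lemma~\ref{lem:IC_algebraic}), and your $1\Rightarrow 3$ direction via the contrapositive --- a cycle in $D_X(M^{T'}_Y,T')$ yields a vertical expansion (Lemma~\ref{lem:gen_rotation_IS}), while a successful dissection yields a new internally-stable matching (Lemma~\ref{mid_dissect_rotation}, part 1) --- is the paper's argument. One caution on $2\Rightarrow 1$: Lemma~\ref{symmetric_diff} is stated under the hypothesis that the two matchings do \emph{not} block each other, so you cannot invoke it to produce an ``irregular edge'' contradiction once you have assumed a blocking pair exists; the clean argument is the other one you gesture at, namely that a blocking edge $xy\in M$ forces $M\lor M''\succ_X M''$ strictly, join-closure puts $M\lor M''\in\mathcal{M}'$, and opposition of interests then demands $M''\succ_Y M\lor M''$, contradicting the fact that $y$ strictly prefers its partner $x$ in $M\lor M''$.

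The genuine gap is in $3\Rightarrow 1$, precisely where you flag ``the main obstacle.'' The paper closes it as follows. Take $M\in\mathcal{M}(T)\setminus\mathcal{S}(T')$ with $\{M\}\cup\mathcal{S}(T')$ internally stable and $M$ \emph{maximal} wrt $\succ_X$ among all such matchings. If $M$ is incomparable to $M_X$ (resp.\ $M_Y$), then $M\lor M_X\in\mathcal{S}(T'\cup M)$ is still internally-stable-compatible and strictly $\succ_X$-larger than $M$, contradicting maximality; hence one of $M\succ_X M_X$, $M\prec_X M_Y$, or $M_X\succ_X M\succ_X M_Y$ holds. The first two cases produce cycles in $D_Y(M_X,T')$ resp.\ $D_X(M_Y,T')$ via Lemma~\ref{symmetric_diff} (now legitimately applicable, since $M$ and $M_X$ do not block each other). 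In the sandwiched case one must identify \emph{which} rotation gets dissected: pick an agent $x$ with $xM(x)\in M\setminus T'$ and $M_X(x)>_x M(x)$, let $\rho\in R_X(T')$ be the rotation whose consecutive pairs at $x$ satisfy $y_k>_x M(x)\geq_x y_{k+1}$, set $M'=M_X/R(\rho)$, $M''=M'/\rho$, and form $M^1=(M'\wedge M)\vee M''$; lattice closure of $\mathcal{S}(T'\cup M)$ places $M^1$ strictly between $M'$ and $M''$, and part 2 of Lemma~\ref{mid_dissect_rotation} then yields a generalized rotation $\rho^g\neq\rho$ exposed in $M'$ inside the window $\widetilde T$, i.e.\ a second cycle in the relevant digraph, so \texttt{DR}$(T,T',\rho)\neq\{\rho\}$. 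Your sketch supplies neither the maximal choice of $M$, nor the join trick forcing comparability, nor the construction of $M^1$; without these, the claim that every strict enlargement is ``witnessed by one of the two local moves'' remains unproved.
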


Before we move to the proof of Theorem~\ref{thm:internally_closed_characterization}, let us make two observations. First, part a) of the algebraic characterization could alternatively be defined in terms of closedness with respect to the meet operator. In particular, an internally closed set is closed wrt both the join and the meet operator. Second, the generalized rotations characterization relies on the solution of the \texttt{DR}$(\cdot)$ problem, which is defined in Section~\ref{sec:IC_algorithmic}.


\subsubsection{Algebraic Characterization of Internally Closed Sets}\label{sec:IC_algebraic}

\begin{lemma}\label{lem:IC_algebraic}
Let $T$ be a marriage instance and ${\cal M}'\subseteq {\cal M}(T)$. ${\cal M}'$ is an internally closed set of matchings if and only if it is an inclusionwise maximal subset of ${\cal M}(T)$ among those that are a) closed wrt the join operator and b) satisfy the opposition of interests.
\end{lemma}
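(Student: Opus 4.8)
The statement asserts an equivalence: $\mathcal{M}'$ is internally closed iff it is inclusionwise maximal among subsets of $\mathcal{M}(T)$ that are closed under join and satisfy opposition of interests. Let me think through both directions and how to structure the argument.

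First, I need to understand the bridge between "internally stable" and "closed under join + opposition of interests". The key observation is that for a marriage instance, two matchings $M, M'$ don't block each other iff... let me think. By Lemma~\ref{symmetric_diff}, if $M, M'$ don't block each other, then $M \triangle M'$ is singletons and even cycles, and no irregular edge exists — meaning on each cycle the $X$-agents' preferences alternate with $Y$-agents' preferences along the cycle. Actually in the bipartite case this is exactly the classical fact: $M,M'$ are "compatible" (don't block each other) iff along every cycle of $M \triangle M'$, the $X$-agents all agree on direction, equivalently $M \vee M'$ and $M \wedge M'$ are matchings and in fact are the sup/inf. So the plan's first major step: prove that a set $\mathcal{M}'$ is internally stable iff it is closed under join and satisfies opposition of interests. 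Wait — that's not quite right either, since internal stability for a pair doesn't immediately give closure. Let me reconsider: I suspect the right lemma is that $\mathcal{M}'$ internally stable implies $\{M, M', M\vee M', M \wedge M'\}$ is also internally stable (so one can always add the join/meet), and conversely any set closed under join and satisfying opposition of interests is internally stable. I should look at this carefully, likely the paper has set-up so that these two properties characterize internal stability for "maximal" sets.

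I think the cleanest route is: (a) Show that if $\mathcal{M}'$ is internally stable, then for any $M, M' \in \mathcal{M}'$, both $M \vee M'$ and $M \wedge M'$ are matchings, they don't block (and are not blocked by) anything in $\mathcal{M}'$, and $M \vee M' \succeq_X M, M'$ while $M \wedge M' \preceq_X M, M'$ — this uses Lemma~\ref{symmetric_diff} and a cycle-by-cycle analysis. Also internal stability directly gives opposition of interests (if $M \succeq_X M'$ fails and $M' \succeq_Y M$ fails simultaneously in a non-comparable way, one produces a blocking edge; and if both hold one gets $M = M'$ or a contradiction — standard). Hence, the internal closure $\overline{\mathcal{M}'}$, when $\mathcal{M}'$ is internally closed, contains $M \vee M'$ and $M \wedge M'$, so an internally closed set is closed under join and satisfies opposition of interests; since it is inclusionwise maximal internally stable (by the earlier discussion), and every join-closed/opposition-satisfying set is internally stable (the converse of (a), proved by the same cycle analysis run backwards), maximality transfers. (b) Conversely, if $\mathcal{M}'$ is maximal among join-closed + opposition-of-interest sets, then it is internally stable (by that converse), hence contained in some internally closed $\mathcal{M}''$; but $\mathcal{M}''$ is itself join-closed and satisfies opposition of interests by the first part, so by maximality $\mathcal{M}' = \mathcal{M}''$, i.e., $\mathcal{M}'$ is internally closed.

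The main obstacle, I expect, is proving the equivalence "internally stable $\iff$ (closed under join) $\wedge$ (opposition of interests)" at the level of pairs and then lifting it — in particular showing that internal stability of $\mathcal{M}'$ forces $M \vee M'$ to again be non-blocking with every other member, not just with $M$ and $M'$. This requires a careful edge-tracking argument: an edge $xy$ of $M \vee M'$ is either an edge of $M$ or of $M'$, and if $xy$ blocked some $M'' \in \mathcal{M}'$ one must derive that the corresponding edge of $M$ or $M'$ already blocked $M''$, contradicting internal stability — the subtlety is that $x$'s partner in $M \vee M'$ may differ from both, but $x$ prefers it at least as much, and we need the $Y$-side inequality, which is where opposition of interests / the cycle structure of $M \triangle M'$ enters. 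I'd isolate this as a sublemma. The direction "join-closed + opposition $\Rightarrow$ internally stable" is easier: if $M$ blocked $M'$ via edge $xy \in M$, then $x$ strictly prefers $M$ and $y$ strictly prefers $M$, so $xy$ should appear in $M \vee M' \wedge$-type analysis and one contradicts $M \vee M' \in \mathcal{M}'$ being a matching together with opposition of interests forcing comparability — here I'd unwind the definitions directly. Finally, the "inclusionwise maximal internally stable = internally closed" identification is already recorded in the text right after the definition of internal closure, so I can cite it freely.
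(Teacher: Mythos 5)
Your plan is logically sound and its overall architecture matches the paper's: the direction ``join-closed $+$ opposition of interests $\Rightarrow$ internally stable'' is argued exactly as you sketch (a blocking edge $x_0y_0\in M$ for $M'$ forces $x_0y_0\in (M\lor M')\setminus M'$, hence $M\lor M'\succ_X M'$; join-closure puts $M\lor M'$ in ${\cal M}'$, opposition of interests then gives $M'\succ_Y M\lor M'$, contradicting that $y_0$ strictly prefers $M\lor M'$), and the maximality bookkeeping in both directions is the same. Where you diverge is in the step you yourself flag as the ``main obstacle'': you propose to prove directly, by a cycle-by-cycle analysis of $M\triangle M'$, that an internally stable set can always be enlarged by $M\lor M'$ and $M\land M'$ without creating blocking pairs. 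That sublemma is true and provable (it is essentially Conway's lattice theorem re-derived inside the sub-instance), but the paper sidesteps it entirely: by Lemma~\ref{lem:internally-stable-are-stable}, any internally stable ${\cal M}''$ satisfies ${\cal M}''\subseteq {\cal S}(\widetilde T)$ with $\widetilde T=\cup\{M:M\in{\cal M}''\}$, and Theorem~\ref{thm:X-optimal-matching} already guarantees that ${\cal S}(\widetilde T)$ is closed under join and satisfies opposition of interests; maximality of ${\cal M}'$ among sets with properties a) and b) then collapses the chain ${\cal M}'\subseteq{\cal M}''\subseteq{\cal S}(\widetilde T)$ to equalities. Likewise, for the forward direction the paper uses Lemma~\ref{lem:from-internally-closed-to-edges} to write an internally closed set as ${\cal S}(\widetilde T)$ and again cites Theorem~\ref{thm:X-optimal-matching}, rather than establishing opposition of interests and join-closure from internal stability by hand. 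So your route buys self-containedness at the cost of re-proving the lattice theorem; the paper's route buys brevity by routing everything through the stable-matching structure of the edge-union table. If you adopt Lemma~\ref{lem:internally-stable-are-stable} and Theorem~\ref{thm:X-optimal-matching} as black boxes, your ``main obstacle'' disappears and your proof becomes the paper's.
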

\begin{proof}
$(\Leftarrow)$ We first show that for any $M, M'\in \M'$, $M$ and $M'$ do not block each other. We will not use inclusionwise maximality of $\M'$, so the statement applies to any set of matchings satisfying a) and b). 

Suppose by contradiction that $x_0y_0$ blocks $M'$ for some $x_0y_0 \in M$. Hence, $y_0 >_{x_0} {M'}(x_0)$ and $x_0 >_{y_0} {M'}(y_0)$, so $x_0y_0\in (M\lor M') \setminus M'$. Now consider the dominance relation between $M\lor M'$ and $M'$:
by definition, $M\lor M' \succeq_X M'$. Since $x_0y_0\in (M\lor M')\setminus M'$, we have $M\lor M' \succ_X M'$. Because $\M'$ is closed wrt the join operator, $M\lor M' \in \M'$. Since $\M'$ satisfies the opposition of interests, we must have $M' \succ_Y M\lor M'$. However, ${(M\lor M')}(y_0)=x_0>_{y_0} {M'}(y_0)$. We reached a contradiction, so $M$ and $M'$ do not block each other.

Therefore $\M'$ is internally stable. Now let $\M''\subseteq \M(T)$ be an internally stable set such that $\M'\subseteq \M^{''}$. By Lemma~\ref{lem:internally-stable-are-stable}, $\M''\subseteq \Ss(\widetilde{T})$ where $\widetilde{T}=\cup\{M:M\in \M''\}$. By Theorem~\ref{thm:X-optimal-matching}, $\Ss(\widetilde{T})$ satisfies a) and b). Since $\M'$ is an inclusionwise maximal set satisfying a) and b) and we have $\M' \subseteq \M^{''} \subseteq \Ss(\widetilde{T})$, the three inclusions are equalities. In particular, $\M'=\M''$. Hence, $\M'$ is an inclusionwise maximal internally stable set -- that is, it is internally closed.


$(\Rightarrow)$ Let $\M'$ be an internally closed set. By Lemma \ref{lem:from-internally-closed-to-edges}, $\M'=\Ss(\widetilde{T})$ for some $\widetilde{T} \subseteq T$. By Theorem~\ref{thm:X-optimal-matching}, $\Ss(\widetilde{T})$ satisfies a) and b). Suppose for a contradiction that there exists a family of matchings $\M''\subseteq  \M\setminus \M'$ such that $\M'' \cup \M'$ satisfies a) and b). Then because of the first part of the proof, $\M''\cup \M'$ is internally stable. This contradicts $\M'$ being internally closed, concluding the proof.  
\end{proof}

\subsubsection{Generalized Rotation Characterization of Internally Closed Sets}\label{sec:IC_algorithmic}

\paragraph{Dissection of a rotation.} Key to the proof of the second characterization on internally closed sets are the problem of dissecting a rotation and the concept of dissecting set. We introduce them next. An illustration is given in Example~\ref{exp:gen_rotations-2}.

\smallskip

\noindent %
\noindent\fbox{\parbox[c]{1\textwidth - 2\fboxsep - 2\fboxrule}{%
\begin{center}
\textbf{Dissecting a Rotation (\texttt{DR})}
\par\end{center}
\textbf{Given:} Marriage instances $T'\subseteq T$ with $T'$ stable, 
and $\rho\in R_X(T')$. \\
\textbf{Find:} A set $R=\{\rho_1, \rho_2,\dots,\rho_k\}$ satisfying the following properties: a) $R\subseteq R_X(T^*)\setminus R_X(T')$ with $T^*=T' \cup_{j=1}^k \rho_j$; b) $M^{T'}_X/R(\rho)/\rho= M^{T'}_X /R(\rho) / R$; or output $\{\rho\}$ if $R$ as above does not exist.}}

\smallskip

If $\DR(T,T',\rho)$ outputs a set $R \neq \{\rho\}$, then $R$ is called a \emph{dissecting set} for $(T,T',\rho)$. Note that a dissecting set has at least two elements. 

\smallskip

\begin{example}\label{exp:gen_rotations-2} Consider again the instance $T$ we studied in Example~\ref{exp:rotations} and Example~\ref{exp:gen_rotations}, and recall that we defined $\rho=\rho^g=(x_1, y_4), (x_3,y_3), (x_2,y_2), (x_4,y_1)$. Let $T'=T \cup \{\rho\}$ (see Figure~\ref{fig:exp:gen_rotations}).
$T$ has $3$ inclusionwise maximal matchings:
$$M=\{x_1y_4,x_2y_2,x_3y_3,x_4y_1\}, \quad M_1=\{x_1y_1,x_2y_2,x_3y_3,x_4y_4\}, \quad 
    M_2=\{x_1y_3,x_2y_1,x_3y_2,x_4y_4\}.$$
$M$ is the X-optimal matching within $T'$ and $\rho$ is the only (classical) $X$-rotation exposed in $M$ in $T'$. Note that we have $M_2=M/\rho$. Consider now table $T^*=T=T' \cup \rho_1 \cup \rho_2$, where 
$$\rho_1=(x_1, y_4), (x_4, y_1), \quad \rho_2=(x_1,y_1),(x_3,y_3), (x_2,y_2).$$
$M$ is also the X-optimal matching within $T$. Note that $\rho_1 \in R_X(T^*)$ is exposed in $M$, $\rho_2 \in R_X(T^*)$ is exposed in $M/\rho_1$, and $M_2=M/\rho_1/\rho_2$. Yet, $\rho_1,\rho_2\not\in R_X(T')$. Hence, $\{\rho_1,\rho_2\}$ is a dissecting set for $(T,T',\rho)$. Note that when going from table $T'$ to table $T^*=T$, the set of stable matchings becomes strictly larger, i.e. $\Ss(T)\supsetneq \Ss(T')$. We are able to achieve this result by dissecting rotation $\rho$ into two rotations $\rho_1$ and $\rho_2$, enlarging the poset of rotations from a singleton to a chain with two elements. 
\end{example}

\paragraph{Dissecting a rotation preserves stability and enlarges the rotation poset.}

Intuitively, the \texttt{DR} procedure replaces a single rotation $\rho$ in $T'$ with a set of rotations $\DR(T, T', \rho)=R$ such that: $R$ are rotations in a table $T^*$ with $T'\subseteq T^*\subseteq T$, but not in $T'$, yet iteratively eliminating $R$ in $T^*$ starting from a matching $M$ where $\rho$ is exposed leads to the same matching obtained by eliminating $\rho$ in $T'$ starting from $M$. Hence, the poset of rotations in $T^*$ is an ``expansion'' of the poset of rotations in $T'$, since $\rho$ has been replaced by a set of at least $2$ rotations.

\begin{lemma}\label{lem:dissecting-preseeves-stability}
Let $T$ be a marriage instance, and $T'\subseteq T$ with $T'$ stable. Let $\rho \in R_X(T')$ and $R=\{\rho_1,\dots,\rho_{k}\}$ be a dissecting set for $(T,T',\rho)$. Then a) $T^*=T' \cup_{j=1}^k \rho_j$ is a stable table and b) $R_X(T^*)=R_X(T')\setminus \{\rho\} \cup \{\rho_1,\dots, \rho_k\}$.
\end{lemma}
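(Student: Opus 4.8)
The two claims a) and b) are closely intertwined, and I would prove them together by analysing how the set of stable matchings and the rotation poset change when passing from $T'$ to $T^*$. The guiding principle is the decomposition of the stable subtable in~\eqref{eq:stable-subtable-decomposition}: a stable table is completely determined (as far as its lattice and rotation poset are concerned) by its $Y$-optimal matching together with the edge sets of its classical $X$-rotations, and conversely one recognizes a stable table by verifying that each purported rotation really satisfies the $T$-dependent condition d) in the ambient table. So the heart of the argument is to show that in $T^*$ the ``old'' rotations $R_X(T')\setminus\{\rho\}$ together with the ``new'' rotations $\rho_1,\dots,\rho_k$ are precisely the classical $X$-rotations of $T^*$, and that no spurious blocking edge is created.

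\textbf{Step 1: The $X$-optimal matching is unchanged and the $\rho_i$ sit above $R(\rho)$.} Write $M_X = M^{T'}_X$. First I would argue $M_X$ is still the $X$-optimal stable matching of $T^*$: no edge of $T^* \setminus T' = \bigcup_j E(\rho_j)$ can block $M_X$, since each such edge lies in some generalized rotation that, by property a) of a dissecting set, is a classical rotation of $T^*$ eliminated (possibly after $R(\rho)$) starting from $M_X$, and classical-rotation edges never block the $X$-optimal matching — more carefully, one checks directly from conditions a)–c) and the fact that $\rho_j \notin R_X(T')$ that the second endpoints of $\rho_j$-edges are agents that $M_X$ already does at least as well against. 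Having established $M^{T^*}_X = M_X$, I would then use property b) of a dissecting set, $M_X / R(\rho) / \rho = M_X / R(\rho) / R$, to locate the new rotations in the poset: eliminating $R(\rho)$ from $M_X$ is legitimate in $T^*$ (those are old rotations, still valid since $T' \subseteq T^*$ and d) is preserved upward), and from the matching $M_X/R(\rho)$ the rotations $\rho_1,\dots,\rho_k$ are exposed in sequence in $T^*$, eliminating which yields exactly the same matching $M_X/R(\rho)/\rho$ that $\rho$ would have produced in $T'$.

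\textbf{Step 2: $T^*$ is stable, i.e.\ prove a).} This is where I expect the main obstacle. I would show every edge of $T^*$ is a stable edge of $T^*$ by exhibiting, for each edge, a stable matching of $T^*$ containing it. Edges of $M_Y^{T'}$ and of the old rotations $\rho' \neq \rho$: these lie in matchings obtained from $M_X$ by eliminating a closed subset of $R_X(T') \setminus \{\rho\}$, and one must check such eliminations are still valid in $T^*$ and yield stable matchings of $T^*$ — the potential difficulty is that a new edge of $T^* \setminus T'$ might block one of these matchings. Here one uses that the new edges all belong to rotations comparable to $\rho$ in the $T^*$-poset, so they only ``interfere'' along the part of the poset at or above $R(\rho)$. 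Edges of the new rotations $\rho_j$: by property a) they are classical rotations of $T^*$, hence their edges are stable in $T^*$ by Theorem~\ref{thm:rotations}–\ref{thm:closed-rotation} applied to $T^*$ once we know $T^*$'s $X$-optimal matching is $M_X$ and the $\rho_j$ are genuinely exposed in succession. Conversely, any edge of $T^*$ not of this form would have to be in $T'\setminus E_S(T')$, but $T'$ is stable so $E_S(T')=T'$, and adding the $E(\rho_j)$ only adds edges that we have just shown are stable. Assembling this, $E_S(T^*)=T^*$.

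\textbf{Step 3: Identify the rotation poset, i.e.\ prove b).} With $T^*$ known to be stable and $M^{T^*}_X = M_X$, $M^{T^*}_Y$ is obtained by eliminating \emph{all} rotations of $T^*$. I would compute $M^{T^*}_Y$ two ways. On one hand, start from $M_X$, eliminate a linear extension of $R_X(T')\setminus\{\rho\}$ that first exhausts $R(\rho)$, then — instead of $\rho$ — eliminate $\rho_1,\dots,\rho_k$ (legal by Step 1 and property b), producing the same intermediate matching as $\rho$ would), then continue with the rotations of $R_X(T')$ strictly above $\rho$; by property b) and Theorem~\ref{thm:rotations} this reaches $M^{T'}_Y = M^{T^*}_Y$. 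This exhibits a valid full elimination sequence using exactly the multiset $R_X(T')\setminus\{\rho\} \cup \{\rho_1,\dots,\rho_k\}$, and by the uniqueness in Theorem~\ref{thm:rotations} this multiset \emph{is} $R_X(T^*)$ — in particular the $\rho_j$ are distinct from each other and from the old rotations, which also follows from $\rho_j \notin R_X(T')$ in property a) and from $|R|\ge 2$ (a dissecting set has at least two elements, as noted after the \texttt{DR} box). Finally, $\rho \notin R_X(T^*)$ because $\rho$ is not exposed in any stable matching of $T^*$: its elimination has been ``factored'' through $\rho_1,\dots,\rho_k$, and were $\rho$ exposed somewhere it would give an $(|R_X(T')|)$-element elimination sequence contradicting the $(|R_X(T')|-1+k)$-element one just found together with uniqueness. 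This yields $R_X(T^*) = R_X(T') \setminus \{\rho\} \cup \{\rho_1,\dots,\rho_k\}$, completing b).

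\textbf{Anticipated main difficulty.} The delicate point is Step 2: ruling out that a newly added edge $x y \in E(\rho_j) \setminus T'$ blocks some stable matching of $T'$ viewed inside $T^*$. The reason it works is structural: property b) of the dissecting set forces the $\rho_j$ to collectively do exactly what $\rho$ did, so in the lattice of $T^*$ the new matchings all lie on the interval between $M_X/R(\rho)$ and $M_X/R(\rho)/\rho$, a region where, in $T'$, only $\rho$ was active; the new edges are confined to that interval and cannot block matchings lying outside it. Making this precise — probably via conditions a)–c) of Definition~\ref{def:classical_rotations} applied to each $\rho_j$ in $T^*$, plus Lemma~\ref{lem:generalized-rotation-digraph} to track exposedness — is the technical core of the proof.
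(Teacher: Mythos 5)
Your overall architecture matches the paper's (show the new edges block no old stable matching, invoke the fact that classical-rotation edges of $T^*$ are stable, then identify $R_X(T^*)$ via the uniqueness in Theorem~\ref{thm:rotations}), but two steps that you treat as routine are exactly where the content of the proof lives, and in both places your sketch either stops short or points in the wrong direction.

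First, the claim you yourself flag as ``the delicate point'' --- that no edge $xy\in\bigcup_i E(\rho_i)\setminus T'$ blocks a stable matching $M$ of $T'$ --- is not actually proved. Your heuristic that the new edges are ``confined to the interval between $M_X/R(\rho)$ and $M_X/R(\rho)/\rho$ and cannot block matchings lying outside it'' does not follow from lattice considerations alone: a priori an $M\in\Ss(T')$ incomparable to $M^0:=M_X/R(\rho)$ could still be blocked by such an edge. The paper closes this with a pointwise tool you never invoke: the strengthened opposition-of-interests property (for stable $M',M''$ and $x'y'\in M'\setminus M''$, \emph{exactly one} of $M'(x')>_{x'}M''(x')$, $M'(y')>_{y'}M''(y')$ holds). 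Since every new edge $xy$ satisfies $M^0(x)>_x y\geq_x M^*(x)$, if $y>_x M(x)$ then $M^0(x)>_x M(x)$, whence $M(y)>_y M^0(y)$ and so $M(y)\geq_y M^*(y)>_y x$, killing the blocking pair. Without some such agent-level argument, Step 2 does not go through.

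Second, in Step 1 you justify re-using the old rotations in $T^*$ by saying condition d) ``is preserved upward'' since $T'\subseteq T^*$. This is backwards: condition d) quantifies over \emph{all} edges $x_iy\in T$ with $y_i>_{x_i}y>_{x_i}y_{i+1}$, so enlarging the table adds constraints and can destroy d) for a rotation of $T'$. Establishing that each $\rho'\in R_X(T')\setminus\{\rho\}$ still satisfies the $T^*$-dependent condition is a genuine verification --- the paper does it by a case split on whether $\rho'$ precedes or follows $\rho$ in an elimination order, using that the only new entries in any $X$-agent's list lie strictly between $M^0(x)$ and $M^*(x)$. Your Step 3, which builds a full elimination sequence out of the old rotations plus $\rho_1,\dots,\rho_k$ and appeals to uniqueness, is the right endgame, but it silently relies on this unproved (and mis-justified) fact.
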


\begin{proof}
Let $M^0 = M_X^{T'}/ R(\rho)$, and $M^*=M^0/\rho$. By construction, $E(R)\subseteq M^* \cup_{j=1}^k \rho_j$ and $\rho_j \in R_X(T^*)$ for $j \in [k]$. 

We first claim that $\Ss(T')\subseteq \Ss(T^*)$. To show it, it suffices to prove that no $xy \in T^*\setminus T'$ blocks $M \in \Ss(T')$. Suppose $y>_x M(x)$. By construction, $M^0(x)>_x y$ and $M^*(y)>_y x$. We apply a stronger version of the opposition of interests property (see, e.g.,~\cite[Theorem 1.3.1]{Irving}), that states that, if $M', M''$ are stable matchings with $x'y' \in M'\setminus M''$, then exactly one of the following holds: $M'(x')>_{x'}M''(x')$, or $M'(y')>_{y'}M''(y')$. Since $M^0(x)>_x y >_x M(x)$, we deduce that $M(y)>_y M^0(y)$. On the other hand, since $M^*$ immediately follows $M^0$ in $(\Ss(T'),\succeq_X)$, we have $M(y)\geq_y M^*(y)>_y x$, proving the claim.

Since $T'$ is a stable table, all edges from $T'$ are therefore stable in $T^*$. By Theorem~\ref{thm:decomposition-stable-subtable}, for $i \in [k]$, all edges from $\rho_i$ are stable in $T^*$. Hence, $T^*$ is a stable table, concluding the proof of a).

In order to show $b)$, let $\rho' \in R_X(T')\setminus \{\rho\}$. By construction, it is a generalized rotation exposed in $M^* \in \Ss(T^*)$ wrt $T^*$. To conclude $\rho' \in R_X(T^*)$, we show that the $T^*$-dependent condition holds. Consider a sequence of eliminations of all rotations from $R_X(T')$, whose existence is guaranteed by Theorem~\ref{thm:rotations}. Let $M$ be the matching from which $\rho'$ is eliminated and $M'=M/\rho'$. We argued above that ${\cal S}(T')\subseteq {\cal S}(T^*)$, hence $M \in {\cal S}(T^*)$. Assume first that $\rho'$ precedes $\rho$ in this ordering. Then each $X$-agent $x \in \rho$ satisfies $M'(x) \geq_x M^0(x)$. Moreover, each $X$-agent $x \notin \rho$ satisfies $M^0(x)=M^*(x)$, hence it has the same preference list in $T'$ and $T^*$. Hence, $M(x)\geq_x M'(x) \geq_x M^0(x)>_xy$ for all $xy \in T^*\setminus T'$. Since the $T'$-dependent condition holds by the assumption $\rho' \in R_X(T')$, also the $T^*$-dependent condition holds. Similarly, using $M'(y)\geq_y M(y) \geq_y M^*(y)$ for every $Y$-agent  $y\in \rho$ sets the case when $\rho$ follows $\rho'$. Thus, $R_X(T^*)\supseteq \{\rho_1,\dots,\rho_k\}\cup R_X(T') \setminus \{\rho\}=:R'$. On the other hand, notice that $M_X/R'=M_Y$, hence b) follows by Theorem~\ref{thm:rotations}. 
\end{proof}


\paragraph{Generalized rotations and internally stable sets.} We next show an intermediate lemma connecting generalized rotations with the operation of enlarging an internally stable set of matchings.

\begin{lemma}\label{mid_dissect_rotation}
Let $T$ be a marriage instance and $T'\subseteq T$ with $T'$ stable. Let $\rho \in R_X({T'})$ be exposed in a matching $M^0\in \Ss(T')$. Let $M^*=M^0/\rho$ and 
$$\widetilde{T}=\{xy\in T| x \in X, y \in Y,  M^0(x) \geq _x y 
\geq_x M^*(x) \text{ and }  M^*(y) \geq_y x \geq_y M^0(y)\}.$$ 

Then:
\begin{enumerate}
    \item Let $\rho^g\neq \rho$ be a generalized rotation exposed in $M^0$ within $\widetilde{T}$, $M^1=M^0/\rho^g$, and  $\overline{T}=T' \cup E(\rho^g)$. Then a) $M^1\not\in\Ss(T')$, b) $\{M^1\}\cup \Ss(T')$ is internally stable,  c) $M^0\succ_XM^1\succ_X M^*$, d) $M^0 \in {\cal S}(\overline T)$, 
    and e) $\rho^g \in R_X({\overline T})$ is exposed in $M^0$.
    \item Conversely, let $M^1 \subseteq T$ be a matching such that $M^0\succ_XM^1\succ_X M^*$ and $\{M^1\}\cup \Ss(T')$ is internally stable. Then there exists a generalized rotation $\rho^g\neq \rho$ exposed in $M^0$  within $\widetilde T$. 
\end{enumerate}
\end{lemma}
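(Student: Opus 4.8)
\textbf{Proof plan for Lemma~\ref{mid_dissect_rotation}.}

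The plan is to treat the two parts symmetrically, since each is essentially an encoding argument between a generalized rotation exposed in $M^0$ within the ``window table'' $\widetilde T$ and a matching $M^1$ strictly between $M^0$ and $M^*$ in the $\preceq_X$-order that does not block (and is not blocked by) any stable matching of $T'$. First I would record the structural meaning of $\widetilde T$: an edge $xy$ lies in $\widetilde T$ precisely when $x$ ranks (in $y$'s list) between $M^0(y)$ and $M^*(y)$ and $y$ ranks (in $x$'s list) between $M^0(x)$ and $M^*(x)$. Because $\rho$ is a classical rotation, the agents of $\rho$ are exactly the agents whose partners differ between $M^0$ and $M^*$, and for an $X$-agent $x_i\in\rho$ we have $M^0(x_i)=y_i$, $M^*(x_i)=y_{i+1}$, while for the corresponding $Y$-agents $M^0(y_i)=x_i$, $M^*(y_i)=x_{i-1}$. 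Any agent not in $\rho$ has $M^0=M^*$ on it, so it only contributes the trivial edge to $\widetilde T$. Hence a generalized rotation exposed in $M^0$ within $\widetilde T$ must use only agents of $\rho$, with each $X$-agent moved to a partner weakly between $y_i$ and $y_{i+1}$ — exactly the combinatorial object that ``dissects'' $\rho$.

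For part 1, given $\rho^g\neq\rho$ exposed in $M^0$ within $\widetilde T$ and $M^1=M^0/\rho^g$: (c) is immediate because eliminating any generalized rotation exposed in $M^0$ strictly decreases the matching in $\preceq_X$ (stated after Definition~\ref{def:generalized_rotations}), giving $M^0\succ_X M^1$; and $M^1\succ_X M^*$ follows since every $X$-agent of $\rho^g\subseteq\rho$ ends up weakly above its $M^*$-partner by the window constraint, and $\rho^g\neq\rho$ forces strict preference somewhere. For (a), if $M^1\in\Ss(T')$ then $M^0\succ_X M^1\succ_X M^*$ would contradict that $M^0$ \emph{immediately precedes} $M^*=M^0/\rho$ in the lattice of $\Ss(T')$ (the ``immediately precedes'' property recalled after the elimination definition). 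For (b), I would argue $M^1$ neither blocks nor is blocked by any $M\in\Ss(T')$: since $M^*\preceq_X M^1\preceq_X M^0$ and $M^*,M^0\in\Ss(T')$, an edge $xy$ of $M^1$ has $M^1(x)=y$ with $M^0(x)\geq_x y\geq_x M^*(x)$ and correspondingly $M^*(y)\geq_y x\geq_y M^0(y)$ — using Lemma~\ref{symmetric_diff}-flavored reasoning together with the lattice position, $x$ and $y$ cannot simultaneously prefer $M^1$ to $M$ nor $M$ to $M^1$ for a stable $M$, because $M$ lies ``between'' $M_X^{T'}$ and $M_Y^{T'}$ which sandwich the relevant rankings; more concretely I would invoke the strong opposition-of-interests / Theorem~1.3.1-type statement from \cite{Irving} on the pairs $(M^0,M)$ and $(M^*,M)$ to rule out both a blocking edge of $M^1$ against $M$ and a blocking edge of $M$ against $M^1$. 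Finally (d) and (e): $\overline T = T'\cup E(\rho^g)$ adds only edges of $\widetilde T$, each of which is ``dominated'' at $M^0$ by the current partners, so no new edge blocks $M^0$ and $M^0\in\Ss(\overline T)$; and since $\rho^g$ satisfies the basic conditions a)--c) at $M^0$ and the new edges make the $\overline T$-dependent condition d) vacuously hold within the window, $\rho^g\in R_X(\overline T)$ is exposed in $M^0$.

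For part 2, conversely, given a matching $M^1\subseteq T$ with $M^0\succ_X M^1\succ_X M^*$ and $\{M^1\}\cup\Ss(T')$ internally stable: first I would show $M^1\triangle M^0$ involves only agents of $\rho$. Apply Lemma~\ref{symmetric_diff} to the pair $M^1,M^0$ (they do not block each other since $M^0\in\Ss(T')$ and internal stability of $\{M^1\}\cup\Ss(T')$): $G[M^1\triangle M^0]$ is a union of even cycles, and no irregular edge exists; combined with $M^0\succeq_X M^1$ this forces, on each such cycle, the $X$-agents to strictly prefer $M^0$ and the $Y$-agents to strictly prefer $M^1$. The same applied to $M^1,M^*$ and the inequality $M^1\succeq_X M^*$ shows each cycle's $X$-agents prefer $M^1$ to $M^*$, i.e.\ their $M^1$-partner lies strictly between their $M^0$- and $M^*$-partners; since agents outside $\rho$ have equal partners in $M^0,M^*$, an agent on such a cycle must be in $\rho$, and every edge of $M^1$ on that cycle lies in $\widetilde T$ by the ranking bounds. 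Reading off one cycle of $M^1\triangle M^0$ as a sequence $(x_0,y_0),\dots$ with $x_i y_i\in M^0$ and $M^1(x_i)=y_{i+1}$ gives a generalized $X$-rotation $\rho^g$ exposed in $M^0$ within $\widetilde T$, and $\rho^g\neq\rho$ because $M^1\neq M^*$ (if $\rho^g=\rho$ on a full pass we would get $M^1=M^0/\rho=M^*$).

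The main obstacle I expect is part (b)/part~2 — precisely pinning down that $M^1$ neither blocks nor is blocked by an arbitrary stable matching of $T'$, which requires carefully combining the sandwich $M^*\preceq_X M^1\preceq_X M^0$ with the strong opposition-of-interests statement for stable matchings and with Lemma~\ref{symmetric_diff}; the rest is bookkeeping on rankings within the window $\widetilde T$.
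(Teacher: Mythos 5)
Your plan matches the paper's proof in all essentials: part 1 is established via the sandwich $M^*(x)\geq_x M^1(x)\geq_x M^0(x)$ (and its $Y$-side mirror) coming from the window $\widetilde T$, the "immediately precedes" property for (a), the strong opposition-of-interests statement from Irving applied to pairs involving $M^*$ for (b), and the observation that the only new edges in $\overline T$ are the $M^1$-edges wedged between $M^0$- and $M^*$-partners for (d)–(e); part 2 is obtained exactly as you describe, by reading a generalized rotation off a cycle of $G[M^1\triangle M^0]$ via Lemma~\ref{symmetric_diff} and using $M^1\succ_X M^*$ to conclude $\rho^g\neq\rho$. The only cosmetic difference is that you derive the $Y$-side sandwich in part 2 from the no-irregular-edge clause of Lemma~\ref{symmetric_diff} where the paper invokes opposition of interests, which is an equivalent route.
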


\begin{proof} 1.

\begin{claim}\label{cl:bip:1}$\{M^1,M^0,M^*\}$ is internally stable and $M^0\succ_X M^1\succ_X M^*$.
\end{claim}

\noindent \underline{Proof of Claim}. By construction of $\widetilde{T}$, it must hold that $M^*(y) \geq_y M^1(y) \geq_y M^0(y)$ for all $y \in Y$ and $M^0(x) \geq_x M^1(x) \geq_x M^*(x)$ for all $x \in X$. Therefore $\{M^1,M^0,M^*\}$ is internally stable. Moreover, $M^1 \neq M^0, M^*$ since $\rho \neq \rho^g$. Hence, $M^0\succ_X M^1\succ_X M^*$. $\hfill \diamond$

\begin{claim}\label{cl:bip:2} $M^1\not\in \Ss(T')$.
\end{claim}

\noindent \underline{Proof of Claim}. Directly from Claim~\ref{cl:bip:1} and from the fact that $M^0$ immediately precedes $M^*$ in the lattice $({\cal S}(T'),\succeq_X)$. $\hfill \diamond$

\begin{claim}\label{cl:bip:3}${M^1} \cup {\cal S}(T')$ is internally stable.  
\end{claim}

\noindent \underline{Proof of Claim}. Let $M \in {\cal S}(T')\setminus \{M^0, M^*\}$. We show that $M$ does not block $M^1$, and $M^1$ does not block $M$. We start with the former: let $xy\in M$. We can assume $xy \notin M^0 \cup M^*$ because by Claim~\ref{cl:bip:1} $M^0, M^*$ do not block $M^1$. 
Suppose $y>_x M^1(x)$. Applying to $M,M^*$ the stronger version of the opposition of interests property used in the proof of Lemma~\ref{lem:dissecting-preseeves-stability} and the fact that  $y>_x M^1(x)\geq_x M^*(x)$, we deduce $M^*(y)>_y x$. By stability of $T'$ and the fact that $M^0$ immediately precedes $M^*$ in the lattice of stable matchings, we deduce that 
$M^0(y)\geq_y x$. Since ${M^1}(y)\geq_y M^0(y)$ (see Claim~\ref{cl:bip:1}), $xy$ does not block $M^1$. 

Next, suppose $xy \in M^1$ blocks $M$. Since $\{M^0, M^*, M\} \subseteq {\cal S}(T')$, we can suppose $xy \notin M^0 \cup M^*$. Assume $y >_x M(x)$. Applying again the stronger version of the opposition of interests property and an argument similar to the above, we deduce $M(y)>_y x$. This shows that $M$ and $M^1$ do not block each other. Hence, $M^1 \cup {\cal S}(T')$ is internally stable.  $\hfill \diamond$

\smallskip

\begin{claim}\label{cl:bip:4}
$M^0 \in {\cal S}(\overline{T})$.
\end{claim}

\noindent \underline{Proof of Claim}. 
By construction, $\overline{T}=T' \cup M^1$. By hypothesis, $M^0 \in {\cal S}(T')$. Since, by Claim~\ref{cl:bip:1}, $M^0$ is not blocked by $M^1$, we deduce $M^0 \in {\cal S}(\overline T)$. $\hfill \diamond$

\begin{claim}\label{cl:bip:5}
 $\rho^g \in R_X({\overline T})$ is exposed in $M^0$.\end{claim}

\noindent \underline{Proof of Claim}. 
 Let $\rho^g$ be of the form~\eqref{eq:generalized-rotation}. Every $xy \in E(\rho^g)$ either belongs to $M^1$, or to $M^0$. Since $M^1,M^0\subseteq \overline T$, $\rho^g$ is a generalized rotation  exposed in $M^0$ within $\overline T$. We next show the $\overline T$-dependent condition. Since $T'$ is a stable table and $M^0$ immediately precedes $M^*$ in $({\cal S}(T'),\succeq_X)$, there exists no edge $xy\in T'$ such that ${M^0}(x) >_x y >_x {M^*}(x)$. Since $M^0(x)\geq_x M^1(x)\geq_x M^*(x)$, there is  no edge $xy \in T'$ such that $M^0(x)>_x y >_x M^1(x)$. 
 
By construction, an $X$-agent $x \notin \rho^g$ has the same preference list in $\overline{T}$ and $T'$, while an $X$-agent $x \in \rho$ has exactly one more agent $M^1(x)=y_{i+1}$  in $\overline{T}$ with respect to $T'$, with $M^0(x)>_x M^1(x)>_x M^*(x)$. In particular, for each $i \in [r-1]_0$, $y_{i+1}=M^1(x_i)$ is the first agent $y$ in $x_i$'s preference list in $\overline{T}$ satisfying $y_i >_{x_i} y$ and $x_i >_{y} M(y)$.  Hence, the $\overline T$-dependent condition is satisfied.  $\hfill \diamond$


\smallskip 
2. 

\begin{claim}\label{cl:bip:6}
$M^1 \subseteq \widetilde T$.
\end{claim}

\noindent \underline{Proof of Claim}. 
By hypothesis, $M^0\succ_X M^1 \succ_X M^*$. Since $M \cup \Ss(T')$ is internally stable, $M \cup \Ss(T')\subseteq \Ss(T'\cup \{M\})$ by Lemma~\ref{lem:internally-stable-are-stable}. Hence we can apply the opposition of interests properties, and deduce $M^*\succ_Y M^1 \succ_Y M^0$. Hence, $M^1 \subseteq \widetilde T$. $\hfill \diamond$

\smallskip 



Consider the graph $G[M^1\Delta M^0]$. Since $\{M^1\}\cup \Ss(T')$ is internally stable by hypothesis, $M^1$ and $M^0$ do not block each other. We can therefore apply Lemma~\ref{symmetric_diff} and conclude that there exists an even cycle $P$ of $G[M^1\Delta M^0]$ without irregular edges. Let $P=x_0\rightarrow y_1\rightarrow x_1\rightarrow\dots \rightarrow y_{k-1} \rightarrow x_{k-1}\rightarrow y_{0}\rightarrow x_0$, where $x_iy_i\in M^0$ and $x_iy_{i+1}\in M^1$ for all $i \in [k-1]_0$. By the assumption $M^0\succ_XM^1$ and the absence of irregular edges, it must hold that $y_i >_{x_i} y_{i+1}$ and $x_{i} >_{y_{i+1}} x_{i+1}$ for all $i \in [k-1]_0$. Hence, $\rho^g=(x_0,y_0),(x_1,y_1),\dots,(x_{k-1},y_{k-1})$ is a generalized rotation exposed in $M^0$ within table $M^1 \cup M^0\subseteq \widetilde T$, where the containment follows from the definition of $\widetilde T$ and Claim~\ref{cl:bip:6}. Moreover, clearly $M^0 / \rho^g \succeq_X M^1$. Since $M^1 \succ_X M^* = M^0/\rho$, we deduce $\rho^g \neq \rho$, concluding the proof of 2.
\end{proof}

\paragraph{Algorithm for Dissecting a Rotation.} In Algorithm~\ref{alg:DR}, we present the routine \emph{Dissect$\_ $Rotation} that solves \texttt{DR}$(T,T',\rho)$. The algorithm starts from matchings $M^0=M_{X}^{T'}/R(\rho)$ and $M^*=M^0/\rho$ and finds a generalized rotation whose edges are contained in the subtable $\widetilde T$ of $T$ defined as in Lemma~\ref{mid_dissect_rotation}. If the only such generalized rotation is $\rho$, then no dissection is possible and $\rho$ is returned; else, the algorithm returns a dissection set for $(T,T',\rho)$.

\begin{algorithm}[H]
\caption{Dissect\_Rotation$(T,T', \rho)$}
\DontPrintSemicolon
\KwIn{Marriage instance $T$, subtable $T'\subseteq T$ with $T'$ stable, $\rho\in R_X(T')$}
\KwOut{A dissecting set for $(T,T',\rho)$, if it exists; else, $\{\rho\}$.} 
\begin{algorithmic}[1]
\STATE Let $M^0=M_X^{T'} / R(\rho)$ and $M^* = M^0 / \rho$.
\STATE Construct subtable $\widetilde{T}$ as follows: $\widetilde{T}=\{xy\in T| x \in X, y \in Y,  M^0(x) \geq _x y 
\geq_x M^*(x) \text{ and }  M^*(y) \geq_y x \geq_y M^0(y)\}.$ 

\IF{$D_X(M^0, \widetilde{T})$ contains exactly 1 cycle}\label{DR:st:if}
    \STATE return $\{\rho\}$.
\ELSE
    \STATE Find a cycle $C_1$ in $D_X(M^0, \widetilde{T})$ corresponding to a generalized rotation $\rho^g_1\neq \rho$. 
    \STATE Define $M^1=M^0/ \rho^g_1$  and suppose $G[M^1 \triangle M^*]$ contains $k-1$ cycles $C_2,\dots,C_{k-1}$ .
\FOR{$j=2,\dots,k-1$}
    \STATE{Let $C_j=x_0\rightarrow y_1\rightarrow x_1\rightarrow y_2 \rightarrow \dots \rightarrow y_{r-1}\rightarrow x_{r-1}\rightarrow y_0\rightarrow x_0$, where, for $i \in [r-1]_0$, $x_iy_i\in M^1$ and $x_iy_{i+1}\in M^*$.}
    \STATE{Set $\rho^g_j=(x_0,y_0),(x_1,y_1),\dots,(x_{r-1},y_{r-1})$.} 
    \ENDFOR
    \STATE return $\{\rho_1^g,\rho^g_2,\dots,\rho^g_k\}$.
\ENDIF

\end{algorithmic}\label{alg:DR}
\end{algorithm}


\begin{theorem}\label{thm:alg:DR:correct} Algorithm~\ref{alg:DR} correctly solves~\texttt{DR} in time $O(n^2)$, where $n$ is the number of agents in the marriage instance $T$.
\end{theorem}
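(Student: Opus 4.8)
The plan is to prove correctness and running time of Algorithm~\ref{alg:DR} separately, leaning on Lemma~\ref{mid_dissect_rotation} for correctness and on a careful accounting of digraph operations for the time bound.

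\textbf{Correctness.} First I would establish that the algorithm returns $\{\rho\}$ exactly when no dissecting set exists. The subtable $\widetilde{T}$ built in line 2 is precisely the table $\widetilde{T}$ from Lemma~\ref{mid_dissect_rotation}, and by Lemma~\ref{lem:generalized-rotation-digraph} cycles in $D_X(M^0,\widetilde{T})$ correspond to generalized $X$-rotations exposed in $M^0$ within $\widetilde{T}$. Since $\rho$ itself is always such a generalized rotation (it is a classical rotation exposed in $M^0$ in $T'\subseteq\widetilde{T}$), the "only 1 cycle" test in line~\ref{DR:st:if} detects whether $\rho$ is the unique generalized rotation exposed in $M^0$ within $\widetilde{T}$. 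By part 2 of Lemma~\ref{mid_dissect_rotation}, if this is the case then there is no matching $M^1$ with $M^0\succ_X M^1\succ_X M^*$ and $\{M^1\}\cup\Ss(T')$ internally stable; I would then argue that this rules out any dissecting set $R$, because $M^0/R(\rho)/R = M^*$ with $R$ a set of at least two rotations would force an intermediate matching strictly between $M^0$ and $M^*$ in the $\succeq_X$ order, which lies in $\Ss(T^*)$ and hence forms an internally stable set together with $\Ss(T')\subseteq\Ss(T^*)$ (using Lemma~\ref{lem:dissecting-preseeves-stability}(a) and Observation~\ref{obs:subset}) --- a contradiction. So returning $\{\rho\}$ is correct.

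\textbf{Correctness of the returned set.} When line~\ref{DR:st:if} fails, the algorithm picks a generalized rotation $\rho^g_1\neq\rho$ exposed in $M^0$ within $\widetilde{T}$, sets $M^1=M^0/\rho^g_1$, and then decomposes $G[M^1\triangle M^*]$ into cycles. By part 1 of Lemma~\ref{mid_dissect_rotation}, $M^0\succ_X M^1\succ_X M^*$ and $\{M^1\}\cup\Ss(T')$ is internally stable, so $M^1$ and $M^*$ do not block each other and Lemma~\ref{symmetric_diff} applies, guaranteeing that $G[M^1\triangle M^*]$ is a disjoint union of even cycles; each cycle yields a generalized rotation $\rho^g_j$ (lines 9--11) as written, and these are exposed successively along the path $M^1 \to M^1/\rho^g_2 \to \cdots \to M^*$ because $M^1\succeq_X M^*$ forces the orientation conditions a)--c). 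I then need to verify the two output requirements of \texttt{DR}: (b) $M^0/R(\rho)/\rho = M^0/R(\rho)/R$ holds because $M^0/R(\rho)=M^0$ and $M^0/\rho = M^* = M^0/\rho^g_1/\rho^g_2/\cdots/\rho^g_k$ by construction; (a) $R\subseteq R_X(T^*)\setminus R_X(T')$ with $T^*=T'\cup\bigcup_j E(\rho^g_j)$: here I would invoke Lemma~\ref{mid_dissect_rotation}(1d,1e) to get $M^0\in\Ss(\overline{T})$ and $\rho^g_1\in R_X(\overline{T})$ for $\overline{T}=T'\cup E(\rho^g_1)$, then argue that the remaining $\rho^g_j$ become classical rotations in $T^*$ by the same $T^*$-dependent-condition check used in the proof of Lemma~\ref{lem:dissecting-preseeves-stability}, and that none of them lies in $R_X(T')$ since eliminating them from $M^0$ passes through $M^1\notin\Ss(T')$ (Lemma~\ref{mid_dissect_rotation}(1a)).

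\textbf{Running time.} The number of agents is $n$, so $|X|+|Y|=n$ and every matching and every rotation has at most $n/2$ edges. Building $\widetilde{T}$ scans each agent's preference list once, $O(n^2)$. The digraph $D_X(M^0,\widetilde{T})$ has $n$ nodes and $O(n^2)$ arcs; detecting whether it has more than one cycle, and extracting one cycle $C_1$ avoiding the (known) cycle corresponding to $\rho$, can be done in $O(n^2)$ by a DFS-based search. Computing $M^1=M^0/\rho^g_1$ is $O(n)$, forming $G[M^1\triangle M^*]$ and decomposing it into its cycles is $O(n)$, and converting each cycle to a $\rho^g_j$ is linear in the cycle length, $O(n)$ total across all cycles. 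Hence the dominant cost is $O(n^2)$.

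\textbf{Main obstacle.} I expect the delicate part to be the argument that returning $\{\rho\}$ is \emph{correct}, i.e., that the nonexistence of a generalized rotation $\neq\rho$ in $\widetilde{T}$ really precludes \emph{every} dissecting set, not just those of a particular shape. This requires carefully translating the combinatorial condition "$M^0/R(\rho)/R = M^0/R(\rho)/\rho$ with $|R|\geq 2$" into the existence of an intermediate matching strictly between $M^0$ and $M^*$, and then confirming that such a matching, which lives in $\Ss(T^*)$ for the appropriate $T^*\subseteq T$, forms an internally stable set with $\Ss(T')$ so that part 2 of Lemma~\ref{mid_dissect_rotation} applies and contradicts the hypothesis. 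The containment $\Ss(T')\subseteq\Ss(T^*)$ needed here is exactly Lemma~\ref{lem:dissecting-preseeves-stability}(a), so the pieces are in place, but the chain of implications must be assembled with care.
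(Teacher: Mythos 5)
Your proposal follows essentially the same route as the paper: the same three-part structure (correctness of the $\{\rho\}$ branch via the cycle/generalized-rotation bijection of Lemma~\ref{lem:generalized-rotation-digraph}, correctness of the returned set via Lemma~\ref{mid_dissect_rotation} and the cycle decomposition of $G[M^1\triangle M^*]$ from Lemma~\ref{symmetric_diff}, then the $O(n^2)$ accounting). The one place where you genuinely diverge is the ``no dissecting set when only one cycle'' direction: you pass through an intermediate matching $M^0/\rho_1$ and invoke part 2 of Lemma~\ref{mid_dissect_rotation}, whereas the paper argues directly that the first rotation $\rho_1$ of a hypothetical dissecting set has $E(\rho_1)\subseteq\widetilde{T}$ and hence produces a second cycle in $D_X(M^0,\widetilde{T})$. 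Both arguments are sound, and yours arguably makes explicit a containment the paper leaves terse.

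There is, however, one step whose justification as written would fail: the claim that $\rho^g_j\notin R_X(T')$ for $j\geq 2$ ``since eliminating them from $M^0$ passes through $M^1\notin\Ss(T')$.'' That reasoning disposes of $\rho^g_1$ (a classical rotation of $T'$ exposed at $M^0\in\Ss(T')$ would map $M^0$ to another stable matching of $T'$, contradicting $M^1\notin\Ss(T')$), but not of the later rotations. For $j\geq 2$ the cycles $C_j$ are vertex-disjoint from $C_1$, so on their agents $M^1$ agrees with $M^0$; consequently \emph{every} edge of $\rho^g_j$ lies in $M^0\cup M^*\subseteq T'$, and the fact that $\rho^g_j$ happens to be exposed at $M^{j-1}\notin\Ss(T')$ does not preclude it from being a classical rotation of $T'$ exposed at some \emph{other} stable matching of $T'$. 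The paper closes this with a different argument: each moved-to pair $(x_i,y_{i+1})$ of $\rho^g_j$ is an edge of $M^*\setminus M^0$ and is therefore also a moved-to pair of $\rho$ itself, and two distinct rotations of the same instance cannot share a moved-to pair; since $\rho\in R_X(T')$ and $\rho^g_j\neq\rho$, it follows that $\rho^g_j\notin R_X(T')$. You would need this (or an equivalent) uniqueness argument to complete requirement a) of \texttt{DR}. The rest of the proposal, including the running-time analysis, matches the paper.
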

\begin{proof} 

\begin{claim}\label{cl:bip:9}
If the algorithm outputs $\{\rho\}$, then there exists no dissecting set for $(T,T',\rho)$. \end{claim}

\noindent \underline{Proof of Claim}. 
We start by observing that $D_X(M^0,\widetilde{T})$ has at least one cycle. In fact, by definition, $\rho$ is a (classical) rotation exposed in $M^0$ within subtable $T'$. Since  $\widetilde{T}\supseteq E(\rho)$, we have that $\rho$ is a generalized rotation exposed in $M^0$ within table $\widetilde{T}$. By Lemma~\ref{lem:generalized-rotation-digraph}, generalized rotations exposed in $M^0$ in $\widetilde{T}$ are in bijection with cycles of $D_X(M^0,\widetilde{T})$, and the observation follows. 

Now suppose that there exists a dissecting set $\{\rho_1,\rho_2,\dots,\rho_k\}$ for $(T,T',\rho)$. We show that $D_X(M^0,\widetilde{T})$ contains at least two cycles. Since the algorithm returns $\{\rho\}$ if and only if $D_X(M^0,\widetilde{T})$ has exactly one cycle, the claim then follows. By definition of dissecting set, $M^*=M^0/\rho=(((M^0/\rho_1)/\rho_2)/\dots)/ \rho_k$. Since $k \geq$ 2 by definition of dissecting set, $\rho_1\neq \rho$. By definition, $\rho_1\in R_X({{T^*}})\setminus R_X(T')$, where ${T^*}=T' \cup_{i=1}^k \rho_i$. Since $E(\rho_1) \subseteq \widetilde{T}$ by construction, we have that $\rho_1$ is a generalized rotation exposed in $M^0$ within $\widetilde T$. By Lemma~\ref{lem:generalized-rotation-digraph}, $D_X(M^0,\widetilde{T})$ contains at least two cycles (one corresponding to $\rho$, one corresponding to $\rho^g$), concluding the proof. \hfill $\diamond$

\smallskip 

Because of Claim~\ref{cl:bip:9}, in order to conclude the proof of correctness of the algorithm, we show that, if $D_X(M^0,\widetilde{T})$ contains more than one cycle, then $\{\rho_1^g,\dots,\rho_k^g\}$ as constructed by the algorithm forms a dissecting set for $(T,T',\rho)$. Suppose therefore that there exist two or more cycles in $D_X(M^0,\widetilde{T})$. Then, by Lemma~\ref{lem:generalized-rotation-digraph}, one of those correspond to the generalized rotation $\rho^g_1\neq \rho$ exposed in $M^0$. Define $T^*=T' \cup_{j=1}^k \rho_j$. 

\begin{claim}\label{cl:bip:7}
Let $x \in X$. The preference lists of $x$ in $T^*$ has at most one more entry than its preference list in $T'$. If this new element exists, it coincides with $M^1(x)$, and satisfies $M^0(x)>_xM^1(x)>_xM^*(x)$.
\end{claim}

\noindent \underline{Proof of Claim}. 
Note that $M^0 \succ_X M^1 \succ_X M^*$ by Lemma~\ref{mid_dissect_rotation}, part 1. $C_2,\dots,C_K$ are vertex-disjoint cycles by construction. Using the map defined by Lemma~\ref{lem:generalized-rotation-digraph}, we deduce that the corresponding sets $E(\rho^g_2),\dots, E(\rho^g_k)$ are vertex-disjoint. Since $M^0 \triangle M^* = E(\rho^g_1) \cup E(\rho^g_2) \cup \dots \cup E(\rho_k)$, the thesis follows. $\hfill \diamond$

\begin{claim}\label{cl:bip:8}
$M^0  \in {\cal S}(T^*)$. \end{claim}

\noindent \underline{Proof of Claim}. By hypothesis, $M^0 \in {\cal S}(T')$ and $M^0 \subseteq T^*$. Since $T'$ is a stable table, no edge from $T'$ blocks $M^0$. $T^*\setminus T'$ consists only of edges from $M^1$. By definition of (generalized) rotation, $M^0 \succ_X M^1$. Hence, no edge from $T^*$ blocks $M^0$, concluding the proof. $\hfill \diamond$

\begin{claim}\label{cl:bip:10}
Let $j \in [k]$. Then a) $\rho_{j}^g \in R_{X}(T^*)$, b) $\rho_{j}^g$ is exposed in $M^{j-1}$ within $T^*$, c) $M^{j}  \in {\cal S}(T^*)$,  and d) $M^{j-1}\succ_X M^j$, where $M^j= M^0 / \rho_1^g / \rho_2^g / \dots / \rho_{j}^g$.
\end{claim}

\noindent \underline{Proof of Claim}. The proof is by induction on $j$. Let $j=1$. Define $\rho_1^g$ as in~\eqref{eq:generalized-rotation}, so that for $i \in [r-1]_0$, $x_iy_i \in M^0$ and $x_i y_{i+1} \in M^1$. By Lemma~\ref{mid_dissect_rotation}, part 1, $\rho^g_1 \in R_X(\overline T)$, where $\overline T = T' \cup E(\rho^g_1)$, and $M^0 \succ_X M^1 \succ_X M^*$. In particular, $\rho_1^g$ satisfies the basic conditions for belonging to $R_X(T^*)$. To show that $\rho_1^g$ satisfies the $T^*$-dependent condition, 
observe that by Claim~\ref{cl:bip:7}, for any $i \in [r-1]_0$, $y_i$ and $y_{i+1}$ are consecutive in the preference list of $x_i$ in $T^*$. Thus a) and b) hold. 
Since $M^0 \in {\cal S}(T^*)$ by Claim~\ref{cl:bip:8}, eliminating $\rho_1^g$ from $M^0$ we obtain $M^1$ with $M^0 \succ_X M^1$ and $M^1 \in {\cal S}(T^*)$, showing c) and d).

Now let $j \geq 2$ and again let $\rho^g_j$ be as in~\eqref{eq:generalized-rotation}. Since $C_2,\dots, C_k$ are vertex-disjoint and, by Lemma~\ref{lem:generalized-rotation-digraph}, $\rho_j^g$ is a generalized rotation exposed in $M^1$, we deduce that $\rho_j^g$ is a generalized rotation exposed in $M^{j-1}$  within $T^*$, proving $b)$. By the fact that $M^j=M^{j-1}/\rho_j^g \prec_X M^{j-1}$, we deduce d). It therefore suffices to show a) which together with $M^{j-1} \in {\cal S}(T^*)$ (by induction hypothesis) immediately implies c), concluding the proof. 

To prove a), it suffices to show that, for $i \in [r-1]_0$, there is no $y \in Y$ such that $x_iy \in T^*$ and $y_i >_{x_i} y >_{x_i} y_{i+1}$. As in the case $j=1$, this immediately follows by Claim~\ref{cl:bip:7}. \hfill $\diamond$

\smallskip

To conclude the proof of the correctness of Algorithm~\ref{alg:DR}, we observe that, for $j \in [k]$, $\rho^g_j \notin R_X(T')$. Indeed, for each $j \in [k]$, it is easy to see that $E(\rho^g_j) \not\subseteq T'$. 

The overall running time is $O(n^2)$ since within this time bound we can: a)
construct $D_X(\widetilde{T})$ and  $M^0$~\cite{Irving}; b)
construct $\widetilde{T}$ and the generalized rotation digraph $D_X(M^0,\widetilde{T})$, decide if the latter has at most one cycle, and compute $\rho_1^g$ if it has more~\cite{johnson1975finding}; c)
compute $\{\rho_2^g,\dots,\rho_k^g\}$ as the symmetric difference of two sets of edges.
\end{proof}

\paragraph{Generalized Rotations Exposed in the $Y$-Optimal Stable Matching.} 

Our next lemma shows the effect of adding, to a stable table, edges from a generalized $X$-rotation exposed in the $Y$-optimal stable matching. A symmetric lemma holds switching the roles of $X$ and $Y$. 

\begin{lemma}\label{lem:gen_rotation_IS}
Let $T$ be a a marriage instance and $T'\subseteq T$, with $T'$ stable. Let $M_Y=M_Y^{T'}$ and suppose $\rho^g$ is a generalized $X$-rotation exposed in matching $M_Y$ within table $T$. Let $M^*=M_Y/\rho^g$ and $\overline T=T' \cup E(\rho^g) =T' \cup M^*$. Then a) $M^*\notin {\cal S}(T')$, b) the set $\{M^*\} \cup \Ss(T')$ is internally stable, c) $\overline T$ is a stable table, d) $M^*$ is the Y-optimal stable matching in $\Ss(\overline T)$ and e) $R_X(\overline T)=R_X(T')\cup \{\rho^g\}$. 
\end{lemma}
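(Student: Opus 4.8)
\textbf{Setup and a)–b) via the "mid" lemma.} The plan is to mimic the structure of Lemma~\ref{mid_dissect_rotation}, specializing to the case where the classical rotation being "dissected" is replaced by the trivial situation at the bottom of the lattice: here $M_Y = M_Y^{T'}$ plays the role of both $M^0$ and the bottom matching, and $M^* = M_Y/\rho^g$ is a new matching that will become the new $Y$-optimal. First I would record that $M^* \ne M_Y$ (since $\rho^g$ has at least one pair and every agent of $X$ in $\rho^g$ strictly improves, so $M_Y \prec_X M^*$ strictly), and in particular, by the opposition of interests on $({\cal S}(T'),\preceq_X)$ — which would hold if $M^*$ were stable — every $Y$-agent in $\rho^g$ strictly prefers $M_Y$ to $M^*$. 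Since $M_Y$ is the $Y$-optimal stable matching of $T'$, no stable matching of $T'$ can be strictly worse for all $Y$-agents than $M_Y$; hence $M^* \notin {\cal S}(T')$, giving a). For b), I would show $\{M^*\}\cup {\cal S}(T')$ is internally stable by checking, for an arbitrary $M\in {\cal S}(T')$, that (i) $M$ does not block $M^*$ and (ii) $M^*$ does not block $M$. For (i): if $xy\in M$ with $y >_x M^*(x)$, then since $M^*(x)\ge_x M_Y(x)$ we get $y >_x M_Y(x)=M(x)$ unless $x\in\rho^g$; in the latter case $M^*(x)>_x M_Y(x)$ and the basic condition c) of the generalized rotation together with the $T$-dependent structure of $M_Y$ being $Y$-optimal forces $M(y)=M_Y(y)\ge_y$ (the relevant $X$-agent), so $xy$ does not block. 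For (ii): an edge $xy\in M^*\setminus M_Y$ has $y=y_{i+1}$, $x=x_i$ for some pair of $\rho^g$, with $x_i >_{y_{i+1}} x_{i+1}=M_Y(y_{i+1})$; but $M(y_{i+1})\ge_{y_{i+1}} M_Y(y_{i+1})$ need not hold in general, so here I would instead use that $M_Y$ is $Y$-optimal in $T'$: every stable $M$ of $T'$ satisfies $M(y)\le_y M_Y(y)$, i.e. $M_Y(y)\ge_y M(y)$, and since $x_i = M^*(y_{i+1})$ while the $Y$-dependent condition (condition on $D_X$ / Lemma~\ref{lem:generalized-rotation-digraph}) gives that $x_i$ passes the "blocking test" against $M_Y(y_{i+1})$ only up to $M_Y$, one deduces $M^*(x_i)=y_{i+1} <$-dominated so that $xy$ does not block $M$. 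The cleanest route is: $M^*$ and $M$ do not block each other iff (by Lemma~\ref{symmetric_diff}) $G[M^*\triangle M]$ is a union of even cycles with no irregular edge — and I would verify directly that any such cycle would produce a generalized $Y$-rotation exposed in $M^*$, which is fine, versus producing a blocking pair, which is what we must exclude; the $Y$-optimality of $M_Y$ in $T'$ is exactly what rules out the bad case.

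\textbf{c) Internal stability of $\overline T$.} Since $\overline T = T'\cup M^*$ and ${\cal S}(T')\cup\{M^*\}$ is internally stable by b), I would invoke Lemma~\ref{lem:internally-stable-are-stable}: a set of matchings is internally stable iff each of them is stable in the union table. So it suffices to note $\cup\{M : M\in {\cal S}(T')\cup\{M^*\}\} = E_S(T')\cup M^* $, and since $T'$ is stable $E_S(T')=T'$, giving $\overline T = T'\cup M^*$; then every matching in ${\cal S}(T')\cup\{M^*\}$ is stable in $\overline T$, so $\overline T$ (viewed as the relevant table) is internally stable in the sense of the paper. Actually the statement "$\overline T$ is internally stable" should be read via Lemma~\ref{lem:internally-stable-are-stable} as: ${\cal S}(T')\cup\{M^*\}\subseteq {\cal S}(\overline T)$, and all its members pairwise non-block, which is precisely b) plus a) plus stability of $T'$. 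I would also need that no edge of $M^*$ that lies outside $T'$ blocks any matching in ${\cal S}(T')$ — this is exactly the content of (i)–(ii) above — and that no edge of $T'$ blocks $M^*$, which is (i) above; together these show every edge of $\overline T$ is a "stable" edge in the appropriate sense.

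\textbf{d) $M^*$ is $Y$-optimal in $\overline T$, and e) $R_X(\overline T)=R_X(T')\cup\{\rho^g\}$.} For d): first $M^*\in{\cal S}(\overline T)$ by c). Since $M_Y$ was $Y$-optimal in $T'$ and $M^*\prec_Y M_Y$ (opposition of interests, as $M_Y\prec_X M^*$), every $Y$-agent weakly prefers $M^*$ to every matching in ${\cal S}(T')$; I would then argue any $M\in {\cal S}(\overline T)$ is already in ${\cal S}(T')\cup\{M^*\}$ — because $\overline T$ is internally stable, ${\cal S}(\overline T)$ is internally stable hence internally closed once we show ${\cal S}(\overline T)\subseteq{\cal S}(T')\cup\{M^*\}$; but this needs the reverse inclusion too. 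A cleaner argument: the number of matchings in ${\cal S}(\overline T)$ equals $2^{|R_X(\overline T)|}$-many closed sets (Theorem~\ref{thm:closed-rotation}), so d) and e) should be proved together. The plan for e) is to show the poset of $X$-rotations of $\overline T$ is obtained from that of $T'$ by adding $\rho^g$ as a new minimal element (recall $\rho^g$ is exposed at $M_Y=M_Y^{T'}$, which is the \emph{bottom} of $({\cal S}(T'),\succeq_X)$, so it should sit \emph{below} everything — wait, $\rho^g$ exposed at $M_Y$ means eliminating it moves \emph{down} from $M_Y$, so in $\overline T$ the new $Y$-optimal is $M^*$ and $\rho^g$ is the \emph{last} rotation eliminated, i.e. a \emph{maximal} element). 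Concretely: (1) every $\rho'\in R_X(T')$ is still an $X$-rotation of $\overline T$ — its basic conditions are unchanged and its $\overline T$-dependent condition holds because $\overline T\setminus T'\subseteq M^*$ consists of edges $x_i y_{i+1}$ with $M^*(x_i)=y_{i+1}<_{x_i}M_Y(x_i)$, so these new edges are all \emph{below} $M_Y(x_i)$ in $x_i$'s list and cannot interfere with the gap condition d) of any $\rho'$ (which concerns agents between consecutive partners along a stable-matching sequence \emph{above} $M_Y$); (2) $\rho^g$ is a classical $X$-rotation of $\overline T$ exposed in $M_Y$ — its $\overline T$-dependent condition holds because for each $x_i$, the only agents in $x_i$'s $\overline T$-list strictly between $M_Y(x_i)=y_i$ and $M^*(x_i)=y_{i+1}$... there are none by construction of the digraph $D_X(M_Y,T)$ arc $(x_i,y_{i+1})$ being chosen as in Lemma~\ref{lem:generalized-rotation-digraph}; here I must be careful — the definition only says $\rho^g$ is a \emph{generalized} rotation, so there could be agents in between in $T$, but in $\overline T=T'\cup M^*$ those intermediate edges are absent unless they were already in $T'$, and if $x_iy\in T'$ with $y_i>_{x_i}y>_{x_i}y_{i+1}$ then since $T'$ is stable and $M_Y$ is $Y$-optimal... this needs $M_Y(y)\ge_y x_i$, which follows because $y <_{x_i} M_Y(x_i)$ means $x_i$ prefers $M_Y(x_i)$ to $y$, and $M_Y$ stable forces $y$'s partner $M_Y(y)$ to be $\ge_y x_i$ — good, so $\rho^g$ satisfies condition d) in $\overline T$ and is a \emph{classical} rotation there; (3) eliminating all of $R_X(T')\cup\{\rho^g\}$ in an appropriate order starting from $M_X^{T'}=M_X^{\overline T}$ reaches $M^*$, and by Theorem~\ref{thm:rotations} this set \emph{is} $R_X(\overline T)$; then d) follows since $M_X^{\overline T}/R_X(\overline T)=M^*=M_Y^{\overline T}$.

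\textbf{Main obstacle.} The delicate point is step (2)/(3): proving that $\rho^g$, which is only \emph{a priori} a generalized rotation in $T$, actually becomes a \emph{classical} rotation in $\overline T$ (i.e. satisfies the $T$-dependent gap condition d) relative to $\overline T$), and that adding it introduces no \emph{other} new rotations. This hinges on two facts that must be used precisely: that $M_Y$ is the \emph{$Y$-optimal} matching of the \emph{stable} table $T'$ (so stability of $T'$ controls partners of intermediate agents), and that the only edges in $\overline T\setminus T'$ are the "downward" edges $x_iy_{i+1}$ of $M^*$. Establishing that no spurious rotation appears is essentially the analogue of the argument in Lemma~\ref{lem:dissecting-preseeves-stability} that $R_X(T^*)=R'$, and I expect to reuse the stronger opposition-of-interests statement (\cite[Theorem 1.3.1]{Irving}) there.
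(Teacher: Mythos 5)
There is a genuine error at the heart of your argument for a), b), and the first half of d): you have the direction of rotation elimination reversed. Eliminating an $X$-rotation (classical or generalized) makes each $X$-agent in the rotation strictly \emph{worse} off and each $Y$-agent in it strictly better off — by condition c) of Definition~\ref{def:classical_rotations}, $x_i$ moves from $y_i$ to the less-preferred $y_{i+1}$ — so $M^*\prec_X M_Y$ and $M^*\succ_Y M_Y$ (the paper records this as $M\succ_X M/\rho$). You assert the opposite (``every agent of $X$ in $\rho^g$ strictly improves, so $M_Y\prec_X M^*$''), and the downstream steps inherit the flip: the claim that every $Y$-agent in $\rho^g$ strictly prefers $M_Y$ to $M^*$, the inequality ``$M^*(x)\ge_x M_Y(x)$'' in your step (i), and the internally contradictory sentence in d) (``$M^*\prec_Y M_Y$ \dots every $Y$-agent weakly prefers $M^*$ to every matching in ${\cal S}(T')$''). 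The paper's proof runs in the correct direction and is very short: $M^*\succ_Y M_Y\succeq_Y M$ for every $M\in{\cal S}(T')$ gives a) (since $M_Y$ is the $\succeq_Y$-greatest stable matching of $T'$) and shows no $M\in{\cal S}(T')$ blocks $M^*$ (a blocking edge $xy\in M$ would need $x>_y M^*(y)$, but $M^*(y)\geq_y M(y)=x$); dually, $M^*\prec_X M_Y\preceq_X M$ shows $M^*$ blocks no $M$, giving b) and c); and $M^*$ matching every $Y$-agent to their first choice in $\overline T$ gives d). Your fallback through Lemma~\ref{symmetric_diff} cannot rescue the argument either, since that lemma takes ``$M$ and $M'$ do not block each other'' as a \emph{hypothesis}; it cannot be the engine that establishes it.

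Your treatment of e) is actually more detailed than the paper's (which asserts d) and e) in one line), and its key observation is sound: $\overline T\setminus T'$ consists only of the edges $x_iy_{i+1}$ of $M^*$, each appended at the bottom of $x_i$'s list, so the gap conditions of the old rotations are untouched and $\rho^g$ becomes a classical rotation of $\overline T$. One justification there is still off, however: for an intermediate $y$ with $x_iy\in T'$ and $y<_{x_i}M_Y(x_i)$, stability of $M_Y$ does \emph{not} force $M_Y(y)\geq_y x_i$ — the non-blocking condition for $x_iy$ is already satisfied on $x_i$'s side, so it says nothing about $M_Y(y)$. What actually closes this case is that $T'$ is a stable table, so the $Y$-optimal matching assigns each $x$ its last entry $\ell_{T'}(x)$; hence no entry of $T'$ lies below $M_Y(x_i)$ in $x_i$'s list, and the interval strictly between $y_i$ and $y_{i+1}$ in $\overline T$ is empty.
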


\begin{proof}
Write $\rho^g$ as in~\eqref{eq:generalized-rotation}. By definition of rotation elimination, ${M^*}(y_{i+1})=x_{i} >_{y_{i+1}} x_{i+1}={M}_Y(y_{i+1})$ for all $i \in [r-1]_0$. Therefore, $M^*\succ_Y M_Y$. By definition, $M_Y\succeq_Y M$ for all $M\in \Ss(T')$. It follows that a) holds and no matching in $\Ss(T')$ can block $M^*$. 

On the other hand, by construction $M^*(x_{i})=y_{i+1} <_{x_i} y_i={M_Y}(x_{i})$ for all $i \in [r-1]_0$. Hence, $M^*(x)\preceq_X M_Y \prec_X M$ for all $M \in {\cal S}(T')$, where the second relation holds by the opposition of interest property and the $Y$-optimality of $M_Y$. Therefore $M^*\prec_X M$ for all $M \in \Ss(T')$. 
It follows that $M^*$ can block no matching in $\Ss(T')$. Therefore b) and, since $T'$ is a stable table, c) hold. 

$M^*$ is Y-optimal in $\Ss(\overline T)$ since for every $y\in Y$, ${M^*}(y)=f_{T'}(y)$, that is, all agents in $Y$ are matched to their first preference within table $\overline T$. Moreover, it is easy to see that $R_X^{\overline T}\supseteq R_X^{T'}$, and if $R_X^{T'}=\{\rho_1,\dots, \rho_k\}$ with $M_Y=\dots(((M/ \rho_1)/\rho_2)/\dots)/\rho_k$, then $M^*=(\dots(((M/ \rho_1)/\rho_2)/\dots)/\rho_k)/\rho^g$. The claim the follows from Theorem~\ref{thm:rotations}.
\end{proof}

\paragraph{Correctness of the characterization.} We now conclude the proof of Theorem~\ref{thm:internally_closed_characterization} by showing that the generalized rotations characterization of internally closed sets is correct.

$(\Rightarrow)$ Suppose ${\cal M}'$ is internally closed. By Lemma~\ref{lem:from-internally-closed-to-edges}, we know that ${\cal M}'=\Ss(T')$ for some stable table $T'\subseteq T$. To show that condition a) holds, suppose by contradiction that, wlog, there exists some cycle in the generalized X-rotation digraph on the $Y$-optimal matching $D_X(M_Y,T')$. By Lemma \ref{lem:gen_rotation_IS}, there exists $M^*\in \M(T)\setminus \Ss(T')$ such that $\{M^*\}\cup \Ss(T')$ is internally stable, contradicting $\Ss(T')$ being internally closed. Condition b) follows from Lemma \ref{lem:dissecting-preseeves-stability}.

$(\Leftarrow)$ Suppose ${\cal M'}$ is not internally closed. If there exists no stable table $T'\subseteq T$ such that ${\cal M}'={\cal S}(T')$, we are done by Lemma~\ref{lem:from-internally-closed-to-edges}. We therefore suppose such a $T'$ exists. By hypothesis, there exists $M \in {\cal M}(T)\setminus \Ss(T')$ such that $\Ss(T')\cup \{M\}$ is internally stable. By hypothesis, $\Ss(T')\cup \{M\} \subseteq {\cal S}(T' \cup M)$. We first show that we can pick $M$ so that one of the following holds: (i) $M\succ_X M^T_X=:M_X$, (ii) $M\prec_X M^T_Y=:M_Y$, or (iii) $M_X\succ_X M \succ_X M_Y$. Suppose that for any $M \in \overline{\Ss(T')} \setminus \Ss(T')$, none of these holds. Pick $M \in \overline{\Ss(T')} \setminus \Ss(T')$ and assume that $M$ is not comparable to $M_X$ wrt $\succ_X$ (the case $M$ being not comparable to $M_Y$ following analogously). Among all $M \in \overline{\Ss(T')} \setminus \Ss(T')$, pick $M$ that is maximal wrt $\succ_X$. Note that for such $M$ we still have that $M$ is incomparable to $M_X$. We can then apply Theorem~\ref{thm:X-optimal-matching} and deduce $M'=M \lor M_X  \in \Ss(T'\cup M)$. Since $\Ss(T')\cup \{M'\}\subseteq \Ss(T'\cup M)$, we deduce that $\Ss(T')\cup \{M'\}$ is internally stable by Lemma~\ref{lem:internally-stable-are-stable}. However, $M' \succ_X M$ since $M_X$ and $M$ are incomparable wrt $\succ_X$, contradicting the choice of $M$.
 

We next show that if one of (i)-(ii)-(iii) holds, one of a),b) from the statement of the characterization is not satisfied. 

Suppose that (i) holds. By Lemma \ref{symmetric_diff}, $G[M\Delta M_X]$ consist only of even cycles whose all edges are regular. Take any such cycle $C=y_0\rightarrow x_1\rightarrow y_1\rightarrow\dots \rightarrow x_{r-1}\rightarrow y_{r-1}\rightarrow x_0\rightarrow y_0$, where for $i\in [r-1]_0$, $y_ix_i \in M_X$ and $y_{i}x_{i+1} \in M$. Since $M\succ_X M_X$, we have $y_i>_{x_{i+1}} y_{i+1}$ and, by regularity of the edges of $C$, $x_i>_{y_i} x_{i-1}$. Hence, $(y_0,x_0),(y_1,x_1),\dots,(y_{r-1},x_{r-1})$ is a generalized Y-rotation exposed in $M_X$ within $T$. By Lemma~\ref{lem:generalized-rotation-digraph}, $D_Y(M_X,T)$ has a cycle, hence a) does not hold.

Suppose (ii) holds. Then we follow an argument symmetric to the previous case and conclude that a) does not hold. 

Suppose that (iii) holds.  
Note that there exists $xy \in M \setminus T'$. Else, since $M$ is not blocked by any matching from $\Ss(T')$ and $T'$ is a stable table, we have $M \in {\cal S}(T')$, a contradiction. Since $M_x\succ_X M \succ_X M_Y$, we must have $M_X(x) >_x y >_x M_Y(x)$. Pick the rotation $\rho\in R_X(T')$ in the form~\eqref{eq:rotation-marriage} such that $(x=x_k, y_k), (x_{k+1}, y_{k+1}) \in \rho$ satisfy $y_k >_x y >_x y_{k+1}$. 
Let $M'=M_X/R(\rho)$. Then $\rho$ is exposed in $M'$ within $T'$ and in addition $M_X\succeq_X M' \succ_X M_Y$. Let $M''=M'/\rho$ and $M^1=(M'\wedge M)\vee M''$. 

We claim that $M'\succ_X M^1 \succ_X M''$. First observe that $M' \succeq_X M^1\succeq_X M''$ because of the definitions of join and meet operators and $M'\succ_X M''$. Moreover, $M^1$ must be distinct from $M'$ or $M''$, since $M'(x) >_x M^1(x)=M(x) >_x M''(x)$. The claim follows. By hypothesis, $\{M\}\cup {\cal S}(T')$ is internally stable and $T'$ is a stable table. Thus, $\{M \} \cup \Ss(T')\subseteq \Ss(T'\cup M)$ and $T' \cup M$ is a stable table. Hence, by Lemma~\ref{lem:internally-stable-are-stable}, $M, M', M''\in\Ss(T'\cup M)$. Using Theorem~\ref{thm:X-optimal-matching}, we deduce that $M^1\in \Ss(T'\cup M)$. Since $T' \cup M$ is a stable table, we deduce that ${\cal S}(T') \cup \{M^1\}$ is internally stable. We can therefore apply Lemma \ref{mid_dissect_rotation}, part 2 (with $M^0=M'$, $M^*=M''$, $M^1=M^1$) and conclude that there exists a generalized rotation $\rho^g\neq \rho$ exposed in $M'$ within $\widetilde{T}$. By Lemma~\ref{lem:generalized-rotation-digraph}, $D_X(M^0,\widetilde T)$ has at least two cycles. The correctness of Algorithm~\ref{alg:DR} (proved in Theorem~\ref{thm:internally_closed_characterization}) implies that $\DR(T,T',\rho)$ does not return $\{\rho\}$. Hence, b) does not hold.

\subsection{Computing an internal closure of an internally stable set}

By building on Theorem \ref{thm:internally_closed_characterization}, Algorithm~\ref{alg_internal_closure} takes as input a marriage instance $T$, a stable table $\widetilde{T}\subseteq T$, and iteratively expand $\widetilde{T}$ to a table $T'$ such that ${\cal S}(T')$ is a closure of ${\cal S}(\widetilde{T})$. In detail, at every step, the algorithm either dissects a rotation from $R_X(T')$ using Algorithm~\ref{alg_dissect_all} as a subroutine, or it finds, for $Z \in \{X,Y\}$, a generalized rotation exposed in $D_Z(M^0,T')$, where $M^0$ is the $Z$-optimal stable matching in $T'$. Note that $T$ and $T'$ are defined as global variables and, for a set $Q$, \emph{$Q$.pop} outputs and simultaneously removes one element of $Q$, and \emph{$Q.enqueue(R')$} adds set $R'$ to $Q$ (the order in which elements are added to or removed from $Q$ does not matter).

\begin{algorithm}[h!]
\caption{Dissect\_All$(T,T',R)$}
\DontPrintSemicolon
\KwIn{Marriage instance $T$, stable subtable $T'\subseteq T$, set $R\subseteq R_X(T')$}
\KwOut{$T''$ stable such that $T'\subseteq T''\subseteq T$}
\begin{algorithmic}[1]
\STATE Let $Q=R$
\WHILE{$Q\neq \emptyset$}
\STATE Let $\rho=Q.$pop
    \STATE Let $R'=Dissect\_Rotation(T,T',\rho)$
    \IF{$|R'| \geq 1$}
    \STATE Set $T'=T'\cup \{\rho| : \rho \in R'\}$ and $Q=Q.enqueue(R')$
    \ENDIF
\ENDWHILE

\end{algorithmic}\label{alg_dissect_all}
\end{algorithm}

\begin{algorithm}[h!]
\caption{Internal\_Closure$(T,\widetilde{T})$}
\DontPrintSemicolon
\KwIn{Marriage instance $T$, stable subtable $\widetilde{T}\subseteq T$}
\KwOut{ $T'\subseteq T$ such that $\Ss(T')$ is an internal closure of $\Ss(\widetilde{T})$}
\begin{algorithmic}[1]
\STATE Set $T'=\widetilde{T}$
\STATE $Dissect$\_$All(T,T',R_X(T'))$

\FOR {$Z \in \{X,Y\}$}
\STATE Let $W =\{X,Y\} \setminus Z$, 
$M^0$ be the $W$-optimal stable matching in $\Ss(T')$, and $D=D_Z(M^0,T)$ 
\WHILE{$D$ has a cycle $C$}
    \STATE Let $\rho$ be the generalized $Z$-rotation corresponding to $C$
    \STATE Set $T' = T' \cup E(\rho)$
    \STATE $Dissect$\_$All(T,T',\{\rho\})$
    \STATE Set $M^0$ as the $W$-optimal matching within $\Ss(T')$, and $D=D_X(M^0,T)$
\ENDWHILE
\ENDFOR 

\STATE Return $T'$
\end{algorithmic}\label{alg_internal_closure}
\end{algorithm}

Note that the output of Algorithm~\ref{alg_internal_closure} is not univocally determined by its input. This is because the internal closure of an internally stable set is, in general, not unique. 

Theorem~\ref{algo:marriage_IStoIC-MC_poly} immediately follows from the following result.

\begin{theorem}\label{algorithmic_correctness}
Let $T$ be a marriage instance with $n$ agents. In time $O(n^4)$, Algorithm~\ref{alg_internal_closure} outputs $T'\subseteq T$ such that $\Ss(T')$ is an internal closure of $\Ss(\widetilde{T})$. 
\end{theorem}
\begin{proof}
Let us first observe that the algorithm is well-defined. Indeed, by Lemma~\ref{lem:dissecting-preseeves-stability} and  Lemma~\ref{lem:gen_rotation_IS}, \emph{Dissect\_Rotation} is always called on a rotation from $R_X(T')$. Moreover, the algorithm halts, since: no entry from $T'$ is ever deleted, each ordered set of edges is added as an element of $Q$ at most once, and when a set of edges is added to $Q$, then the number of entries of $T'$ strictly increases. 
Let therefore $T'$ be an output of the algorithm. We prove that $\Ss(T')$ is internally closed. As our first step, we show that $T'$ is a stable subtable of $T$ containing $\widetilde{T}$. This is clearly true at the beginning of the algorithm, for $T'=\widetilde{T}$ and $\widetilde{T}$ is a stable subtable of $T$. Each time $T'$ is modified, it is enlarged, hence at the end of the algorithm it contains $\widetilde{T}$. Each time we repeat Step 7 in Algorithm~\ref{alg_internal_closure}, $T'$ is internally stable by Lemma~\ref{lem:gen_rotation_IS}, and its counterpart obtained swapping $X$ and $Y$. Because of Lemma~\ref{lem:dissecting-preseeves-stability} and Theorem~\ref{thm:alg:DR:correct}, every call to \emph{Dissect\_All} keeps $T'$ a stable table. 
We conclude therefore that the output $T'$ is a stable subtable of $T$ containing $\widetilde{T}$. 

By repeated applications of Lemma~\ref{lem:dissecting-preseeves-stability} and Lemma~\ref{lem:gen_rotation_IS}, $R_X(T')=R_X(\widetilde{T})\cup R' \cup R''\setminus R_0 $, where $R_0$ are the rotations that are dissected throughout the algorithm, $R'$ is the outcome of their dissections, and $R''$ is the  set of rotations corresponding to cycles in $D_Z(M_Z,T')$ found by the algorithm for $Z \in \{X,Y\}$. All rotations from $R_X(T')$ are checked for dissecting throughout the algorithm. Moreover, by definition, for stable tables $T'\subseteq T''\subseteq T$, and a rotation in $R_X(T')\cap R_X(T'')$, if there is no dissection set for $(T,T',\rho)$, then there is no dissection set for $(T,T'',\rho)$ either. Hence, we can use the generalized rotations characterization from Theorem~\ref{thm:internally_closed_characterization} and conclude that the output $T'$ is internally closed.


As for the running time, it suffices to bound the total running time of all executions of \emph{Dissect\_Rotation} throughout the algorithm. We claim that this is upper bounded by $O(n^4)$. Indeed, for each pair $xy\in 
 T$, when we run \emph{Dissect\_Rotation} on a rotation $\rho$ with $xy \in \rho$, we either not dissect $\rho$, and by construction we never run \emph{Dissect\_Rotation} again on $\rho$ (hence $\rho \in R_X(T')$ for the output $T'$ by Lemma~\ref{lem:dissecting-preseeves-stability} and Lemma~\ref{lem:gen_rotation_IS}), or we dissect $\rho$, and we strictly increase the size of $T'$. Since $T'\subseteq T$ has clearly $O(n^2)$ edges and the number of rotations of a table is linear in the number of edges (see, e.g.,~\cite{Irving}), we deduce that \emph{Dissect\_Rotation} is run $O(n^2)$ times. Since each of its execution takes time $O(n^2)$ by Theorem~\ref{thm:alg:DR:correct}, the bound follows.
\end{proof}

\section{Internally Closed Sets: the Roommate Case}\label{sec:roommate_case}
In this section we return to the more general roommate instances. 
Our major structural contribution is an extension of the theory of rotations to incorporate what we call \emph{stitched rotations}. 
In turn, the complexity of finding a stitched rotation allows us to deduce the computational hardness of deciding whether a set of matchings is internally closed (in this section), as well as deciding whether a set of matchings is vNM stable of deciding if an instance has a vNM stable set of matchings (in the next section), via reductions from $3$-SAT.  


The section is organized as follows. In Section~\ref{sec:irving_algo}, we recall Irving's two-phases algorithms for finding a stable matching (or deducing none exists) in a roommate instance. We then establish several structural properties implied by Irving's algorithm: in Section~\ref{sec:valid_table}, we use its Phase 1 to justify a restriction of the problem domain to \emph{valid} tables; based upon this restriction, in Section~\ref{sec:nonbipartite_rotations} we use Phase 2 of Irving's algorithm to introduce the notion of rotation in the roommate problem which is different from the bipartite case. Rotations in the roommate case can be further categorized into singular and non-singular rotations. In Section~\ref{sec:antipodal_edge_stitched_rotations}, we introduce the new notion of \emph{stitched rotations}, an object stemming from dual rotations which allows us to augment the set of stable matchings of a roommate instance to a larger internally stable set of matchings, or conclude that no such augmentation is possible. Building on all these structural properties, in Subsection \ref{sec:NP_hard_deciding_internally_closed} we employ a reduction from 3-SAT to show the following.

\begin{theorem}\label{thm:coNp-hard-check-closedness}
\texttt{CIC} is co-NP-hard.
\end{theorem}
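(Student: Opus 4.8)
# Proof Proposal for Theorem \ref{thm:coNp-hard-check-closedness}

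The plan is to reduce from 3-SAT, exhibiting for every 3-CNF formula $\phi$ a roommate instance $T_\phi$ together with a stable subtable $\tilde T_\phi \subseteq T_\phi$, computable in polynomial time, such that $\mathcal S(\tilde T_\phi)$ is \emph{not} internally closed if and only if $\phi$ is satisfiable. By the preceding excerpt this is exactly the content of Theorem~\ref{augment_dual}: $\mathcal S(\tilde T_\phi)$ fails to be internally closed precisely when the poset of rotations admits a \emph{stitched rotation} (Definition~\ref{def:stitched_rotation}). So the reduction really needs only to engineer an instance whose poset of rotations admits a stitched rotation iff $\phi$ is satisfiable, and the co-NP-hardness of \texttt{CIC} follows: a ``no'' certificate for \texttt{CIC} (i.e.\ a witness that $\mathcal S(\tilde T)$ is \emph{not} internally closed, equivalently a stitched rotation) is exactly a satisfying assignment, so \texttt{CIC} being easy would make 3-SAT co-NP-easy hence NP-easy.

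First I would set up the gadgetry. A stitched rotation, as flagged in Section~\ref{sec:antipodal_edge_stitched_rotations}, is an object built from dual rotations that stitches together rotational structure across antipodal edges; its existence is a nonlocal property of the rotation poset. I would introduce, for each variable $v_i$ of $\phi$, a ``variable gadget'' --- a small collection of agents whose preference cycles produce two potential rotational continuations (corresponding to the literals $v_i$ and $\bar v_i$), exactly one of which can participate in a global stitched rotation. For each clause $C_j = (\ell_{j1}\lor\ell_{j2}\lor\ell_{j3})$ I would introduce a ``clause gadget'' that can be bypassed/closed only if at least one of the three incident literal-edges has been ``activated'' by its variable gadget. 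The edges of $\tilde T_\phi$ would be chosen so that $\tilde T_\phi$ is stable (this must be checked directly, running Irving's Phase~1/Phase~2 structure from Section~\ref{sec:irving_algo} on $\tilde T_\phi$, or simply exhibiting that every edge of $\tilde T_\phi$ lies in a stable matching and no pair blocks), while the ``extra'' edges in $T_\phi \setminus \tilde T_\phi$ are precisely the ones a stitched rotation would need to traverse.

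Next I would prove the two directions of the equivalence. For the forward direction (satisfiable $\Rightarrow$ not internally closed): given a satisfying assignment, select in each variable gadget the literal-continuation consistent with the assignment; since every clause is satisfied, each clause gadget has an activated incident edge, and I would verify that the chosen edges assemble into a valid stitched rotation for the poset of rotations of $\tilde T_\phi$ --- checking the conditions of Definition~\ref{def:stitched_rotation} gadget by gadget, with the clause gadgets providing the ``closure'' that lets the stitched rotation complete. By Theorem~\ref{augment_dual} this shows $\mathcal S(\tilde T_\phi)$ is not internally closed. For the converse (not internally closed $\Rightarrow$ satisfiable): by Theorem~\ref{augment_dual} there is a stitched rotation $\sigma$; I would argue that $\sigma$ must, within each variable gadget, commit to exactly one of the two literal-continuations (the gadget geometry forbids using both, and a ``don't care'' behaviour can be normalized to a choice), and that passing through each clause gadget forces at least one incident literal of that clause to be the chosen one. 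Reading off the committed literals yields a satisfying assignment of $\phi$.

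The main obstacle will be the converse direction: ensuring that the gadgets are rigid enough that \emph{every} stitched rotation --- not just the ``intended'' ones --- decomposes cleanly into per-variable choices that respect clause constraints. Stitched rotations are global objects that may wind through the instance in unanticipated ways (for instance re-entering a gadget, or using antipodal/dual structure in a manner not matching any assignment), so the bulk of the technical work is a case analysis showing the gadgets admit no ``spurious'' stitched rotations. I would control this by keeping the gadgets locally tree-like except for the designed cycles, so that the rotation poset of $\tilde T_\phi$ and the possible augmenting stitched rotations are tightly constrained; verifying stability of $\tilde T_\phi$ and enumerating the local rotational moves in each gadget are then the routine-but-lengthy steps. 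A secondary point to get right is the polynomial bound: the number of agents is $O(n+m)$ for $\phi$ with $n$ variables and $m$ clauses, and $\tilde T_\phi$, $T_\phi$ are clearly constructible in polynomial time, so the reduction is genuinely a polynomial-time many-one (Karp) reduction, which suffices for co-NP-hardness.
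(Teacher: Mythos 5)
Your overall strategy coincides with the paper's: reduce from 3-SAT, use Theorem~\ref{augment_dual} to translate ``$\mathcal S(\tilde T)$ is not internally closed'' into ``a stitched rotation exists,'' and build an instance in which stitched rotations correspond exactly to satisfying assignments. The logical direction of the reduction (satisfiable iff not internally closed, hence co-NP-hardness of \texttt{CIC}) is also right. But the proposal stops precisely where the proof begins: the gadgets are never constructed, and the construction is the entire mathematical content here. Saying you would design ``a small collection of agents whose preference cycles produce two potential rotational continuations'' and clause gadgets that ``can be bypassed only if an incident edge is activated'' is a restatement of the goal, not an argument; in particular your own flagged obstacle (ruling out spurious stitched rotations) cannot be assessed without the concrete preference lists.

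For comparison, the paper's construction has three features your plan does not supply. First, the variable gadget for $v_j$ is an explicit cycle $T_j$ on $2(2\,cnt(v_j)-1)$ agents whose rotation set is a single dual pair $\{\rho_j,\overline{\rho_j}\}$ (Lemma~\ref{rotations-of-Tj}); a truth value for $v_j$ is encoded by which of the two is contained in the companion stable matching. Second, there are no clause gadgets at all: the clause constraint is enforced by the antipodal edges of Step~2a, which join a literal-agent of clause $C_r$ only to literal-agents of clause $C_{r+1}$ (cyclically in $r$), so any stitched rotation is forced to be a single cycle traversing exactly one literal per clause (Claim~\ref{cl:rho-all-clauses}); consistency of the induced assignment then follows from Lemma~\ref{lem:stitched_rotation_paird_matching}, because the stable matching paired with the stitched rotation must contain exactly one of $\rho_j,\overline{\rho_j}$, which forbids using both $v_j$ and $\overline{v_j}$ (Claim~\ref{cl:assignment-valid}). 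Third, one must ensure $\tilde T=E_S(T)$ after adding the extra edges, which the paper achieves with the auxiliary fixed pair $z'w'$ and Step~2b; your plan asserts stability of $\tilde T_\phi$ but does not address why the added edges are themselves unstable in $T_\phi$. Without these ingredients the proposal is a correct outline of the intended reduction but not a proof.
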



\subsection{Structural Properties of Solvable Roommate Instances}\label{sec:roommate_structures}

\subsubsection{Irving's Algorithm for the Stable Roommate Problem}\label{sec:irving_algo}

Irving \cite{irving1985efficient} first proposed an
efficient algorithm that, given a roommate instance, either finds a stable matching or concludes that no stable matching exists. Here we discuss Irving's algorithm mostly following the notation from~\cite{Irving}. The algorithm relies on the concept of rotations for the roommate case (see Section~\ref{sec:nonbipartite_rotations} for definitions and properties), and runs in two phases. In Phase 1 described in Algorithm~\ref{alg:Phase-1}, certain edges are eliminated from the original instance. We remark that the ``semiengaged'' relationship is not symmetric -- that is, $x$ being semiengaged to $y$ is different from, and does not imply that, $y$ is semiengaged to $x$.

\begin{algorithm}[H]
\caption{Phase 1 of Stable Roommate Algorithm}
\DontPrintSemicolon
\KwIn{Original preference table $T$}
\KwOut{Reduced (valid) table $T_0$}
\begin{algorithmic}[1]
\STATE Set $T_0=T$ 
\STATE Assign each agent to be free

\WHILE{some free agent $x$ has a nonempty list}
\STATE $y\coloneqq$ first agent on $x$'s list

\IF{some agent $z$ is semiengaged to $y$}
\STATE $y$ rejects $z$, assign $z$ to be free
\ENDIF 
\STATE assign $x$ to be semiengaged to $y$
\STATE delete from $T_0$ edge $x'y$ for all $x'$ such that $x>_y x'$ 
\ENDWHILE
\STATE Remove any empty row and the corresponding agent, and output $T_0$.
\end{algorithmic}\label{alg:Phase-1}
\end{algorithm}

\begin{algorithm}
\caption{Phase 2 of Stable Roommate Algorithm}
\DontPrintSemicolon
\KwIn{Reduced table $T_0$ from Phase 1}
\KwOut{A stable matching or report none exist}
\begin{algorithmic}[1]
\STATE Let $T= T_0$

\WHILE{some list in $T$ has more than one entry and no list in $T$ is empty}
\STATE Find a rotation $\rho$ exposed in $T$
\STATE Set $T=T/\rho$
\ENDWHILE 
\IF{some list in $T$ is empty}
\STATE Report instance unsolvable
\ELSE
\RETURN $T$
\ENDIF
\end{algorithmic}\label{alg:Phase-2}
\end{algorithm}

\subsubsection{Valid Tables}\label{sec:valid_table}
We call a roommate instance $T$ \emph{solvable} if ${\cal S}(T)\neq \emptyset$. Throughout Section \ref{sec:roommate_structures}, we make two assumptions. First, that $T$ is solvable. Second, that each agent in $T$ is matched in some (equivalently, each, see, e.g.,~\cite{Irving}) stable matching.

Although the second hypothesis is not needed, it will greatly simplify the technical details and the notation. Note that both these hypotheses are satisfied by the instances used for our complexity theory results. We now discuss the important concept of \emph{valid} tables\footnote{Those tables are called \emph{stable} in~\cite{Irving}. However, we prefer not to additionally overload the term \emph{stable}.}.




\begin{definition}\label{def:valid_table}
Let $T$ be a roommate instance, and $T'\subseteq T$. $T'$ is a valid table if it satisfies the following conditions: 
\begin{enumerate}[label=\alph*)] 
\item\label{valid_cond_3} No agent's list is empty;
\item\label{valid_cond:first-is-last} Let $z$ be an agent. Then $w=f_{T'}(z)$ if and only if $z=\ell_{T'}(w)$.
\end{enumerate}
$T'$ is moreover called a valid subtable of $T$ if it also satisfies the following property.
\begin{enumerate}
\item[c)]\label{valid_cond_2} Let $zw \in T$. $zw \notin T'$ if and only if $\ell_{T'}(z)>_z w$ or $\ell_{T'}(w)>_w z$.
\end{enumerate}
\end{definition}


\begin{lemma}\label{lem:T-star-is-valid}
Let $T$ be a roommate instance and $T^*=E_S(T)$. Then $T^*$ is a valid table.  
\end{lemma}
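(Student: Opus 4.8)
The plan is to verify the two defining conditions of a valid table directly for $T^* = E_S(T)$, using the well-known structural properties of stable matchings in the roommate case established by Irving (which the paper allows us to cite), together with the standing assumption that Phase 1 of Irving's algorithm removes no row on solvable instances. For condition \ref{valid_cond_3}, I would argue that since $T$ is solvable there is a stable matching $M \in \mathcal{S}(T)$, and by our assumption that no agent is removed in Phase 1, every agent is matched in $M$ (every agent strictly prefers being matched to being single, and Phase 1 already guarantees nonempty lists after reduction). Hence every agent $z$ has $M(z) \in T^*$, so no list in $T^*$ is empty.

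For condition \ref{valid_cond:first-is-last}, the key step is the classical fact (see \cite[Section 4.2]{Irving} or the analogous statement there) that after Phase 1 the reduced table $T_0$ satisfies the ``first--last'' symmetry, and that $E_S(T)$ coincides with (or is a subtable of) $T_0$ enjoying the same property. More concretely, I would reason as follows: let $z$ be an agent and $w = f_{T^*}(z)$, i.e., $w$ is the most-preferred agent of $z$ among stable partners-compatible edges. I want to show $z = \ell_{T^*}(w)$, i.e., $z$ is the least-preferred among $w$'s remaining agents. First, $zw \in T^*$ means $zw$ lies in some stable matching, so $zw$ survives in $T^*$. Suppose for contradiction that some $z'w \in T^*$ with $z >_w z'$; then $z'w$ lies in a stable matching $M'$, while $zw$ lies in a stable matching $M$. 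Since $w = f_{T^*}(z)$, agent $z$ strictly prefers $w$ to $M'(z)$ (as $M'(z) \in T^*$ and $M'(z) \neq w$ because $w$ is matched to $z' \neq z$ in $M'$, unless $M'(z)=w$ which is excluded). Also $w$ strictly prefers $z$ to $z' = M'(w)$. Hence $zw$ blocks $M'$, contradicting stability of $M'$. The converse direction ($z = \ell_{T^*}(w)$ implies $w = f_{T^*}(z)$) is symmetric: if $w' = f_{T^*}(z)$ with $w' \neq w$, the same argument with roles of the two sides swapped produces a blocking pair in the stable matching containing $zw$. This establishes \ref{valid_cond:first-is-last}.

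The main obstacle I anticipate is being careful about the interplay between $E_S(T)$ and the Phase-1 reduced table $T_0$: the cleanest route is probably to first invoke that $E_S(T) \subseteq T_0$ and that $T_0$ already satisfies the first--last property, then observe that deleting unstable edges from $T_0$ to obtain $E_S(T)$ cannot break the property — an edge $f_{T_0}(z)z$ is stable (it lies in the $z$-optimal-like structure after Phase 1), so it is not among the deleted unstable edges, hence $f_{T^*}(z) = f_{T_0}(z)$, and similarly for the last element. If instead one wants a self-contained argument, the blocking-pair argument sketched above suffices and does not require reasoning about $T_0$ at all, at the cost of re-deriving what is essentially Irving's observation. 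I would present the self-contained version for clarity, and note in passing that the statement is implicit in \cite{Irving}. Condition c) is not required here since the lemma only claims $T^*$ is a valid \emph{table}, not a valid \emph{subtable}.
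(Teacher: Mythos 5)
Your proof is correct and takes essentially the same approach as the paper: condition a) follows from solvability plus the standing assumption that Phase 1 removes no row, and condition b) is obtained by taking stable matchings containing $zw$ and $z'w$ and deriving a contradiction with stability (the paper packages the same contradiction as an irregular edge via Lemma~\ref{symmetric_diff} rather than as a direct blocking pair). The only caveat is that your ``symmetric'' converse direction ($z=\ell_{T^*}(w)\Rightarrow w=f_{T^*}(z)$) actually produces an irregular edge with both endpoints of $zw$ preferring the other stable matching, not a literal blocking pair, so there you do need Lemma~\ref{symmetric_diff}, part 2 (or a finiteness/bijection argument) --- though the paper's own proof leaves that direction implicit as well.
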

\begin{proof}
 a) follows from our assumptions. Given an agent $z$, suppose for a contradiction that $w=f_{T^*}(z)$, and $z>_w \ell_{T^*}(w)$.
Let $M\in \Ss(T)$ such that $w\ell_{T^*}(w)\in M$. Since $M\subseteq T^*$, $w=f_{T^*}(\ell_{T^*}(w)) >_z M(z)$. Hence, $wz$ blocks $M$, a contradiction.

\end{proof}

\subsubsection{Rotations}\label{sec:nonbipartite_rotations}

\paragraph{Known definitions and facts.}

A major difference between the stable marriage problem and the stable roommate problem is the absence of a (known) non-trivial lattice structure in the former. This change calls for different tools for analyzing the problem. Specifically, we adopt a distinct definition of rotations for the roommate instance in Definition \ref{def:rotation}, and then categorize them further into singular and non-singular rotations. Unless otherwise stated, all proofs and concepts from this section can be found in~\cite{Irving}. 

Throughout the rest of the section, we assume that $T'$ is a valid subtable of $T$.

\begin{definition}\label{def:fixed_pair}
We say that $xy\in T$ is a \emph{fixed pair} if $xy\in M$ for all $M\in \Ss(T)$.
\end{definition}

\begin{definition}\label{def:rotation} A sequence
$\rho=(x_{0},y_{0}),(x_{1},y_{1}),...,(x_{r-1},y_{r-1})$ is called
a \emph{rotation exposed in $T'$} if $y_{i}=f_{T'}(x_{i}),y_{i+1}=s_{T}(x_{i})$
for all $i \in [r-1]_0$, where indices are taken modulo $r$\footnote{By condition \ref{valid_cond:first-is-last} from Definition~\ref{def:valid_table}, we can see that $x_{i}=\ell_{T'}(y_{i})$.}.
Call set $X(\rho)=\{x_{i}| i \in [r-1]_0\}$ the \emph{$X$-set of
$\rho$} and $Y(\rho)=\{y_{i}| i \in [r-1]_0\}$ the \emph{$Y$-set of $\rho$}.

If $\rho=(x_{0},y_{0}),(x_{1},y_{1}),...,(x_{r-1},y_{r-1})$ is a
rotation exposed in the table $T'$, then $T'/\rho$ denotes the table
obtained from $T'$ by deleting all pairs $y_{i}z$ such that $y_{i}$
strictly prefers $x_{i-1}$ to $z$. We refer to this process as to the \emph{elimination of $\rho$ in $T$}.
\end{definition}

Similarly to the marriage case, we also interpret $\rho$ as a table with entries $x_iy_i$ for $i \in [r-1]_0$. 
We also write $zw\in\rho$ if $(z,w)$ or $(w,z)$ is among the sequence of pairs in $\rho$.

Let ${\cal Z}(T)$ denote the set of all rotations exposed at some valid subtable of $T$. In the following, we also fix rotations
\begin{equation}\label{eq:rho}\rho=(x_{0},y_{0}),(x_{1},y_{1}),...,(x_{r-1},y_{r-1}).\end{equation}
and $\sigma$ from ${\cal Z}(T)$, with $\rho\neq \sigma$.

We first observe that eliminating a rotation from a table preserves validity and moves the favorite entry of each agent from the $X$-set one position to the right.

\begin{lemma}\label{lem:valid-stay-valid}
Let $\rho$ be a rotation exposed at $T'$ of the form~\eqref{eq:rho}. Then $T' / \rho$ is a valid subtable of $T$ and for each $i\in [r-1]_0$, we have $f_{T'/\rho}(x_i)=y_{i+1}=s_{T'}(x_i)$.
\end{lemma}

On the other hand, unless a valid subtable has exactly one entry per row, it always has an exposed rotation.

\begin{lemma}\label{lem:always-a-rotation}
Suppose that some agent in $T'$ has a preference list of length at least $2$. Then there is a rotation exposed at $T'$.
\end{lemma}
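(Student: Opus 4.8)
The plan is to produce an exposed rotation explicitly, by building a ``successor'' function on agents and extracting a cycle from it. First I would isolate a preliminary structural fact about valid tables that is really the engine of the argument: conditions~\ref{valid_cond_3} and~\ref{valid_cond:first-is-last} of Definition~\ref{def:valid_table} together force the agents of $T'$ to split into \emph{fixed} agents (those whose preference list in $T'$ has length exactly $1$) and \emph{non-fixed} agents (those whose list has length at least $2$), and moreover fixed agents are paired up: if the list of $x$ in $T'$ is the singleton $\{y\}$, then applying~\ref{valid_cond:first-is-last} to the identities $f_{T'}(x)=y$ and $\ell_{T'}(x)=y$ shows $x=\ell_{T'}(y)$ and $x=f_{T'}(y)$, so $y$'s list is $\{x\}$. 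Under the hypothesis of the lemma, at least one non-fixed agent exists.

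Next I would define, for each non-fixed agent $x$, the successor $\nu(x):=\ell_{T'}(s_{T'}(x))$, which makes sense precisely because $s_{T'}(x)$ exists for non-fixed $x$. The key point --- and the step I expect to need the most care --- is to verify that $\nu$ sends non-fixed agents to non-fixed agents, so that $\nu$ is an honest self-map of the finite set of non-fixed agents. For this: $s_{T'}(x)$ is non-fixed, because $x$ lies on its list (symmetry of $T'$) while a fixed agent's only list entry would have to be $x$, forcing $x$ itself to be fixed; and then $\ell_{T'}(s_{T'}(x))$ is non-fixed by the identical singleton argument applied with $s_{T'}(x)$ in the role of $x$. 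Keeping the quantifiers of condition~\ref{valid_cond:first-is-last} straight while translating between ``$y$ is first on $x$'s list'' and ``$x$ is last on $y$'s list'' is the bookkeeping that requires the most care here.

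Finally, iterating $\nu$ from any non-fixed agent $x_0$ yields an eventually periodic sequence $x_0,\nu(x_0),\nu^2(x_0),\dots$ on a finite set, so after relabeling I get $x_0,x_1,\dots,x_{r-1}$ with $x_{i+1}=\nu(x_i)$ for indices modulo $r$. Setting $y_i:=f_{T'}(x_i)$, the relation $x_{i+1}=\ell_{T'}(s_{T'}(x_i))$ combined with~\ref{valid_cond:first-is-last} gives $y_{i+1}=f_{T'}(x_{i+1})=s_{T'}(x_i)$ for all $i$ modulo $r$, which is exactly the defining condition in Definition~\ref{def:rotation}; the length is automatically $r\ge 2$, since $\nu(x)=x$ would force $f_{T'}(x)=s_{T'}(x)$, which is impossible. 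Hence $\rho=(x_0,y_0),(x_1,y_1),\dots,(x_{r-1},y_{r-1})$ is a rotation exposed in $T'$. I note that condition~\ref{valid_cond_2} (the extra requirement that distinguishes a valid \emph{subtable} of $T$ from a valid table) plays no role in this argument.
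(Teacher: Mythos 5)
Your proof is correct, and it coincides with the argument the paper relies on: the paper does not reprove this lemma but defers to the cited reference, where the proof is exactly your construction — partition agents into fixed and non-fixed ones using conditions a) and b) of Definition~\ref{def:valid_table}, check that the successor map $x \mapsto \ell_{T'}(s_{T'}(x))$ preserves non-fixedness, and extract a cycle of this map, which condition b) converts into the defining relations $y_i = f_{T'}(x_i)$, $y_{i+1}=s_{T'}(x_i)$ of an exposed rotation. Your observation that condition c) of Definition~\ref{def:valid_table} is not used, and your check that $r \ge 2$, are both accurate.
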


The relationship between rotations and valid tables is even tighter: the former can be used to characterize the latter. Let ${\cal Z}=\{\rho_{1},\rho_{2},...,\rho_{t}\}  \subseteq {\cal Z}(T)$, so that, for $i \in [t]$, $\rho_i$ is exposed in $(((T/\rho_1)/\rho_2)/ \dots ) / \rho_{i-1}$. Let $T/\mathcal{Z}$ denotes the instance obtained from $T$ after iteratively eliminating $\rho_{1},\rho_{2},...,\rho_{t}$ in an appropriate order. 

\begin{lemma}\label{lem:all-valid-table-from-rotations}
$T'=T/{\cal Z}$ for some ${\cal Z}\subseteq {\cal Z}(T)$. 
\end{lemma}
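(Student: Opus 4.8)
The statement to prove is: for any valid subtable $T'$ of a solvable roommate instance $T$, there exists a sequence of rotations $\mathcal{Z} \subseteq \mathcal{Z}(T)$ such that $T' = T/\mathcal{Z}$. My plan is to argue by a "meet in the middle" descent: start from $T$ (which is itself a valid table by Lemma \ref{lem:T-star-is-valid} applied after Phase 1, under our standing assumption that Phase 1 deletes no rows) and repeatedly eliminate rotations, showing that one can always steer the elimination process toward $T'$ while never overshooting it, i.e. maintaining the invariant $T' \subseteq T''$ where $T''$ is the current table.

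The key steps, in order. First, I would set up the invariant: let $T''$ be a valid subtable of $T$ with $T' \subseteq T''$ (true initially with $T'' = T$). If $T'' = T'$ we are done. Otherwise, since $T' \subsetneq T''$, some agent has strictly more entries in $T''$ than in $T'$, so in particular some agent in $T''$ has a list of length $\geq 2$; by Lemma \ref{lem:always-a-rotation} there is at least one rotation exposed in $T''$. The crucial claim is that among the rotations exposed in $T''$, at least one — call it $\rho$ — can be eliminated without destroying the containment $T' \subseteq T''/\rho$. To find such a $\rho$, I would look at an agent $z$ where $f_{T''}(z) \neq f_{T'}(z)$ (such $z$ exists since $T' \subsetneq T''$ and both are valid, hence $f_{T'}(z) >_z f_{T''}(z)$ is impossible unless the first entry differs; more carefully, pick $z$ with $f_{T''}(z) \notin T'$, which must exist because the edge from $z$ to its $T''$-first partner, if it were in $T'$, would force that partner to be $z$'s first in $T'$ too by validity condition \ref{valid_cond:first-is-last}, and then the last-ranked partner of that agent would match, propagating a contradiction down the chain). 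Then trace the rotation exposed at $z$ in $T''$ using the successor structure $y_{i+1} = s_{T}(x_i)$; because the head edges $x_i y_i = x_i f_{T''}(x_i)$ are all absent from $T'$ (by the choice/propagation argument), eliminating $\rho$ only deletes edges $y_i w$ with $y_i$ preferring $x_{i-1}$ to $w$ — and I must check none of these deleted edges lies in $T'$. This last verification is where validity condition c) of $T'$ (an edge $zw$ is absent from $T'$ iff $\ell_{T'}(z) >_z w$ or $\ell_{T'}(w) >_w z$) does the work: since $x_{i-1} \notin T'$-list of $y_i$ means $x_{i-1} >_{y_i} \ell_{T'}(y_i)$, any $w$ with $x_{i-1} >_{y_i} w$ in fact satisfies... this needs care, and is the main obstacle (see below).

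Second, after establishing that such a "safe" rotation $\rho$ exists, eliminate it: by Lemma \ref{lem:valid-stay-valid}, $T''/\rho$ is again a valid subtable of $T$, and by construction $T' \subseteq T''/\rho$, and $T''/\rho$ has strictly fewer entries than $T''$ (a rotation elimination always deletes at least the pairs $x_i y_{i}$... actually it deletes pairs $y_i w$ with $y_i$ strictly preferring $x_{i-1}$ to $w$; since $y_i = f_{T''}(x_i)$ but after elimination $f(x_i)$ becomes $y_{i+1}$, the pair $x_i y_i$ is removed, so at least $r \geq 2$ pairs go). Thus the process terminates, and since at each step we stay $\supseteq T'$ and strictly shrink, it must terminate exactly at $T'$. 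Concatenating the rotations eliminated along the way gives the desired $\mathcal{Z} \subseteq \mathcal{Z}(T)$, each exposed in the appropriate intermediate table by construction.

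The main obstacle I anticipate is the "safety" argument in Step one: proving that the rotation $\rho$ exposed at a carefully chosen agent $z$ in $T''$ deletes only edges outside $T'$, so that $T' \subseteq T''/\rho$ is preserved. This requires simultaneously using all three validity conditions of $T'$ and the two validity conditions of $T''$, together with the consistency of preference orders between $T'$, $T''$ and $T$, and the precise description of which edges a rotation elimination removes. In particular I expect to need the following sub-claim: if $y$ is an agent with $\ell_{T'}(y) = x^-$ and $x^-$ is the second-ranked-in-$T''$ predecessor arising in the rotation, then every edge $yw$ deleted by the elimination satisfies $x^- >_y w$, hence $w$ is past $\ell_{T'}(y)$ in $y$'s order (using that $x^-$ is weakly before $\ell_{T'}(y)$, which itself follows from $T' \subseteq T''$ and validity), hence $yw \notin T'$ by condition c). Making this chain of inequalities airtight — especially pinning down that the predecessor $x_{i-1}$ in the rotation is weakly preferred by $y_i$ to $\ell_{T'}(y_i)$ — is the technical heart of the argument; everything else is bookkeeping and induction on table size.
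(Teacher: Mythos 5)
The paper does not actually prove this lemma -- it is one of the facts imported wholesale from Gusfield and Irving -- and your descent strategy (maintain a valid subtable $T''\supseteq T'$, find a rotation exposed in $T''$ whose elimination deletes no edge of $T'$, recurse) is exactly the argument used there. Your propagation step is sound: once one first-edge $x_0f_{T''}(x_0)$ lies outside $T'$, every first-edge $x_iy_i$ of the traced rotation lies outside $T'$, by alternating applications of conditions b) and c).

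The genuine gap is the ``safety'' claim, and as written your key inequality points the wrong way. Eliminating $\rho$ deletes the pairs $y_iz$ with $x_{i-1}>_{y_i}z$; to conclude $y_iz\notin T'$ you need $z$ strictly worse than $\ell_{T'}(y_i)$, hence you need $\ell_{T'}(y_i)\geq_{y_i}x_{i-1}$, i.e.\ $x_{i-1}$ weakly \emph{after} $\ell_{T'}(y_i)$ in $y_i$'s list -- not ``weakly before'' as in your sketch (with that direction, deleted pairs lying between $x_{i-1}$ and $\ell_{T'}(y_i)$ could belong to $T'$ and the invariant would break). The correct inequality does hold: suppose $x_{i-1}>_{y_i}\ell_{T'}(y_i)$. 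Since $x_{i-1}y_{i-1}\notin T'$ and $y_{i-1}=f_{T''}(x_{i-1})$, the agent $f_{T'}(x_{i-1})$ sits at $T''$-rank at least $2$ in $x_{i-1}$'s list, so $y_i=s_{T''}(x_{i-1})\geq_{x_{i-1}}f_{T'}(x_{i-1})\geq_{x_{i-1}}\ell_{T'}(x_{i-1})$; thus both disjuncts of condition c) fail for the pair $x_{i-1}y_i$, forcing $x_{i-1}y_i\in T'$, whence $f_{T'}(x_{i-1})=y_i$ and, by condition b) for $T'$, $x_{i-1}=\ell_{T'}(y_i)$, contradicting $x_{i-1}>_{y_i}\ell_{T'}(y_i)$. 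With this the invariant $T'\subseteq T''/\rho$ is preserved and your induction closes. Two smaller repairs: the descent must start from the Phase-1 table $T_0$ rather than from $T$ (an arbitrary $T$ need not satisfy condition b), so $T$ itself is generally not valid; one also needs the standard fact that every valid subtable of $T$ is contained in $T_0$), and you should note that the traced sequence $x_0,x_1,\dots$ is well defined because each $x_i$ with $x_if_{T''}(x_i)\notin T'$ has a $T''$-list of length at least $2$ (its $T'$-list is nonempty by condition a)), so the sequence is infinite and its cyclic part yields the exposed rotation.
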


Rotations can be further classified into two
categories: \emph{singular} and \emph{non-singular}.

\begin{definition}\label{rotation_singularity} $\rho$ 
is called a \emph{non-singular} rotation if $\overline \rho \in {\cal Z}(T)$, where \begin{equation}\label{eq:dual-rotation}\overline{\rho}=(y_{1},x_{0}),(y_{2},x_{1}),...,(y_{i},x_{i-1}),...,(y_{0},x_{r-1}),\end{equation} and \emph{singular} otherwise.
If $\rho$ is non-singular, we call $\overline{\rho}$ the dual
of $\rho$. Observe that $\overline {\rho}$ is also non-singular, and $\overline{\overline{\rho}}=\rho$. The subset of ${\cal Z}(T)$ containing all singular (resp.~non-singular) rotations is denoted by ${\cal Z}_s(T)$ (resp.~${\cal Z}_{ns}(T)$).
\end{definition}

It turns out that one can eliminate singular rotations before eliminating any non-singular one. 

\begin{lemma}
\label{lem:singular-first} 
$T/{\cal Z}_s$ is well-defined, that is, we can order singular rotations of $T$ so that they can be iteratively eliminated, starting from $T$. Moreover, for every stable matching $M$, $M\subseteq T/\mathcal{Z}_{s}$.
\end{lemma}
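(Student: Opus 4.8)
My first goal is to show that the singular rotations of $T$ can be eliminated before any non-singular one. The key structural fact I would invoke (from Irving's theory, which the excerpt permits me to cite) is that the set $\mathcal{Z}(T)$ of all rotations carries a partial order $\preceq$ — analogous to the marriage case — for which the valid subtables of $T$ correspond exactly to the closed sets of this poset, in the sense of Lemma~\ref{lem:all-valid-table-from-rotations}: a set $\mathcal{Z}$ of rotations can be eliminated iteratively from $T$ precisely when $\mathcal{Z}$ is closed (downward-closed) in $(\mathcal{Z}(T),\preceq)$. Thus to prove that $T/\mathcal{Z}_s$ is well-defined, it suffices to show that $\mathcal{Z}_s(T)$ is a closed set in this poset, i.e. that no non-singular rotation lies below a singular one. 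Concretely I would argue the contrapositive: suppose $\sigma \preceq \rho$ with $\sigma$ non-singular and $\rho$ singular. Using the characterization of the poset order in terms of forced eliminations, and the fact that $\overline{\sigma}$ is a rotation whenever $\sigma$ is, one derives that $\overline{\rho}$ must also be exposable — i.e. $\overline{\rho}\in\mathcal{Z}(T)$ — contradicting singularity of $\rho$. This step is the technical heart: it requires knowing how the predecessor relation in the rotation poset interacts with duality $\rho \mapsto \overline{\rho}$, which is the piece I would lean most heavily on Irving for (and which I'd cite precisely, e.g. the relevant lemma/theorem in~\cite{Irving}).

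Granting that $\mathcal{Z}_s(T)$ is closed, the instance $T/\mathcal{Z}_s$ is well-defined by Lemma~\ref{lem:all-valid-table-from-rotations} (applied in the direction that a closed set of rotations yields a valid subtable obtained by iterated elimination), and by Lemma~\ref{lem:valid-stay-valid} each intermediate table remains a valid subtable, so no agent's list becomes empty along the way — this uses solvability of $T$.

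For the second assertion — that $M \subseteq T/\mathcal{Z}_s$ for every stable matching $M$ — I would use the fact that elimination of a rotation $\rho$ exposed at a valid subtable $T'$ deletes only pairs $y_i z$ with $y_i$ strictly preferring $x_{i-1}$ to $z$, and that such a deleted pair cannot belong to any stable matching of $T$. Indeed, if $y_i z$ were in a stable matching $M$ while $x_{i-1}y_i \in \rho \subseteq$ some matching, then since $y_i$ prefers $x_{i-1}$ to $z = M(y_i)$, and $x_{i-1}$'s situation in the valid subtable (where $y_i = s$-value, so $x_{i-1}$ prefers $y_i$ to $f_{T'}(x_{i-1})$... ) forces $x_{i-1}$ to prefer $y_i$ to its eventual stable partner, the pair $x_{i-1}y_i$ would block $M$ — contradiction. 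So no stable pair is ever deleted during the elimination of any rotation, singular or not; inducting on the sequence of singular rotations eliminated to form $T/\mathcal{Z}_s$ gives $M \subseteq T/\mathcal{Z}_s$. (This argument is in fact the standard invariant underlying Irving's algorithm — that phases preserve all stable pairs — so I would cite it as such and only sketch the block-pair verification.)

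The main obstacle I anticipate is the duality-versus-order step in the first paragraph: making precise why $\sigma$ non-singular and $\sigma \preceq \rho$ forces $\overline{\rho}$ to exist. If the clean poset-theoretic statement "$\mathcal{Z}_s$ is a down-set" is not directly available in~\cite{Irving} in the form I want, the fallback is a more hands-on argument: take a shortest iterated-elimination sequence from $T$ that exposes $\rho$, and show that if it must pass through a non-singular $\sigma$, one can reroute it — eliminating $\overline{\sigma}$ among the singular batch is impossible since $\overline\sigma$ is non-singular, so instead one shows the sequence can be reordered so all singular rotations come first, by a local-swap/exchange argument on adjacent eliminations. I would try the citation route first and only expand the exchange argument if a referee pushes back.
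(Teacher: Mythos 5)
The paper does not actually prove this lemma: Section~\ref{sec:nonbipartite_rotations} opens by declaring that all results in that subsection are imported from~\cite{Irving}, so there is no in-paper argument to compare against and your sketch has to stand on its own. It does not, for two reasons, one of which is an outright false claim.

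The false claim is in your argument for the second assertion: ``no stable pair is ever deleted during the elimination of any rotation, singular or not.'' Eliminating a rotation $\rho=(x_0,y_0),\dots,(x_{r-1},y_{r-1})$ exposed in $T'$ deletes in particular the pairs $x_iy_i$ (since $x_i=\ell_{T'}(y_i)$ and $y_i$ prefers $x_{i-1}$ to $x_i$), and when $\rho$ is non-singular these are stable pairs by Lemma~\ref{lem:stable-pairs}; indeed by Lemma~\ref{sm_dual} there is a stable matching containing all of them. Your blocking-pair verification silently assumes $M(x_{i-1})\neq f_{T'}(x_{i-1})=y_{i-1}$ (you also have the preference backwards: $x_{i-1}$ prefers $f_{T'}(x_{i-1})$ to $y_i=s_{T'}(x_{i-1})$, not the other way). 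When $M$ contains all the pairs $x_jy_j$ of $\rho$, the pair $x_{i-1}y_i$ does not block $M$ and the argument collapses. Singularity is exactly the hypothesis that rules this case out, and your proof never uses it; an argument for the second assertion that works for arbitrary rotations would prove that every stable matching is contained in every valid subtable, which is false.

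For the first assertion, reducing to ``$\mathcal{Z}_s$ is a down-set'' is a reasonable plan, but two things need repair. First, the equivalence ``a set of rotations can be iteratively eliminated iff it is downward-closed'' is not true as stated in the roommate setting: a dual pair $\{\rho,\overline{\rho}\}$ can be downward-closed, yet by Lemmas~\ref{lem:eliminating-rotations-preserves-exposition} and~\ref{lem:only-rho-and-dual} one can never eliminate both. This is harmless for $\mathcal{Z}_s$, which contains no dual pairs, but it must be said. Second, the ``technical heart'' --- that a non-singular $\sigma$ preceding a singular $\rho$ leads to a contradiction --- is essentially the whole content of the lemma, and you defer it to an unspecified citation; your proposed route (deducing $\overline{\rho}\in\mathcal{Z}(T)$) is not obviously what such a hypothesis yields. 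A direct argument is available from facts already stated in the section: if $\sigma\preceq\rho$ with $\rho$ singular, then by Theorem~\ref{thm:T_lem:eliminating-rotations-preserves-exposition} every stable matching eliminates $\rho$, hence eliminates $\sigma$, hence contains no pair $x_iy_i\in\sigma$; this contradicts Lemma~\ref{sm_dual} applied to the non-singular $\sigma$. As written, your proposal leaves both the order-theoretic step and the preservation step unestablished.
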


The next results highlights a tight connection between rotations, valid tables, and stable matchings. 

\begin{theorem}\label{thm:T_lem:eliminating-rotations-preserves-exposition} Any $M \in {\cal S}(T)$ is a valid subtable  of $T$ resulting from a sequence of rotation eliminations from $T$, i.e., $M=T/\mathcal{Z}_M$ for some $\mathcal{Z}_M\subseteq \mathcal{Z}(T)$. Moreover, $\mathcal{Z}_M$ contains every singular rotation and exactly one of each dual pair of rotations and is univocally determined by $M$. Conversely, every sequence of rotation elimination will eventually lead to a valid subtable with one entry per row corresponding to a stable matching.
\end{theorem}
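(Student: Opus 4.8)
I would prove the two directions separately, relying throughout on Irving's structural lemmas recalled above. For the forward direction fix $M\in\mathcal{S}(T)$ and view $M$ as a table with one entry per row (consistent with the subtable relation since $M$ has singleton lists). By Lemma~\ref{lem:singular-first} all singular rotations can be eliminated first, yielding a valid subtable $T_1:=T/\mathcal{Z}_s$ with $M\subseteq T_1$; this already forces every singular rotation into the set $\mathcal{Z}_T(M)$ being constructed. Working inside $T_1$, where every exposed rotation is non-singular, I would then run a greedy procedure: keep a valid subtable $T''$ with $M\subseteq T''$; while some row of $T''$ has length $\geq 2$, eliminate an exposed rotation chosen so as to keep $M$ a subtable, and record it. Each elimination strictly decreases the number of entries, so the process halts at a valid subtable with one entry per row; since it contains $M$ it must equal $M$, so $M=T/\mathcal{Z}_T(M)$.

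\textbf{The crux.} The greedy step is justified by the lemma: \emph{if $T''$ is a valid subtable with $M\subseteq T''\neq M$, then some rotation $\rho$ exposed in $T''$ satisfies $M\subseteq T''/\rho$.} To approach it, take any exposed rotation $\rho=(x_0,y_0),\dots,(x_{r-1},y_{r-1})$ (Lemma~\ref{lem:always-a-rotation}); if $M\subseteq T''/\rho$ we are done, so suppose the elimination of $\rho$ deletes an edge of $M$. By the definition of elimination this forces $x_{i-1}>_{y_i}M(y_i)$ for some $i$; stability of $M$ forbids $x_{i-1}y_i$ from blocking $M$, so $M(x_{i-1})\geq_{x_{i-1}}y_i$, and since the list of $x_{i-1}$ in $T''$ begins $y_{i-1},y_i,\dots$ (the case $M(x_{i-1})=y_i$ being excluded, as it would make $M(y_i)=x_{i-1}$, contradicting $x_{i-1}>_{y_i}M(y_i)$) we get $M(x_{i-1})=y_{i-1}$, i.e.\ $x_{i-1}y_{i-1}\in M$. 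Because $x_{i-1}=\ell_{T''}(y_{i-1})$, applying stability to the pair $x_{i-2}y_{i-1}$ repeats the deduction, and going around the cycle gives $x_jy_j\in M$ for all $j$; moreover each $y_j$ is matched in $M$ to its \emph{last} entry $x_j$ of $T''$ and $x_{j-1}>_{y_j}x_j$. I would then show that in this situation the dual rotation $\overline{\rho}$ (which exists since $\rho$ is non-singular) can be exposed --- after first eliminating, if necessary, other rotations none of whose eliminations delete an edge of $M$ --- and that eliminating $\overline{\rho}$ only deletes, from each $x_j$'s list, partners that $x_j$ likes \emph{less} than $y_j=M(x_j)$, hence preserves $M$. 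Verifying that such a cleanup is always available is where I expect the real difficulty to lie, and it is precisely there that the combinatorics of dual rotations (Definition~\ref{rotation_singularity}) and Lemmas~\ref{lem:valid-stay-valid},~\ref{lem:all-valid-table-from-rotations} must be used.

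\textbf{The ``moreover'' clauses.} Granting the crux lemma, $\mathcal{Z}_T(M)$ contains every singular rotation by Lemma~\ref{lem:singular-first}; it contains exactly one member of each dual pair because the procedure runs until no row has length $\geq 2$ (so no dual pair can be omitted), while a set resulting from such a run cannot contain both $\rho$ and $\overline{\rho}$ --- these shift the common agent set $X(\rho)=Y(\overline{\rho})$ in opposite directions through their preference lists, which no single matching $M$ can respect. Uniqueness of $\mathcal{Z}_T(M)$ follows since, by the crux lemma (and its proof), at each step the rotation that must be eliminated to stay above $M$ is determined by $M$, and Lemma~\ref{lem:all-valid-table-from-rotations} identifies the terminal table as the unique valid subtable equal to $M$.

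\textbf{The converse.} Let $T'$ be a valid subtable of $T$ with one entry per row. If $b$ is the unique entry of $a$ in $T'$ then $b=f_{T'}(a)$, so condition \ref{valid_cond:first-is-last} of Definition~\ref{def:valid_table} gives $a=\ell_{T'}(b)$, which is the unique entry of $b$; hence $T'$ is the table of a well-defined matching $M$ with $M(a)$ the unique entry of $a$. If some $ab\in T$ blocked $M$, then $b>_aM(a)$ and $a>_bM(b)$, so neither is $\ell_{T'}(a)=M(a)$ preferred by $a$ to $b$ nor is $\ell_{T'}(b)=M(b)$ preferred by $b$ to $a$; by the defining condition of a valid subtable (part c of Definition~\ref{def:valid_table}) this gives $ab\in T'$, contradicting that $a$'s only entry is $M(a)\neq b$. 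Hence $M\in\mathcal{S}(T)$. This direction needs nothing beyond the defining conditions of valid subtables.
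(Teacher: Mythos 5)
The paper does not actually prove this theorem: it sits in the block of ``known definitions and facts'' whose proofs are explicitly deferred to Gusfield and Irving's book, so there is no in-paper argument to compare yours against. Judged on its own, your converse direction is correct and complete (it uses exactly conditions b) and c) of Definition~\ref{def:valid_table}), and your dichotomy for the forward direction --- an exposed rotation $\rho$ either preserves $M$ under elimination or satisfies $x_iy_i\in M$ for all $i$ --- is derived correctly. But the forward direction has a genuine gap, and you flag it yourself: when the chosen $\rho$ is contained in $M$, you propose to expose and eliminate $\overline{\rho}$ ``after first eliminating, if necessary, other rotations none of whose eliminations delete an edge of $M$'' --- yet the existence of such $M$-preserving rotations is precisely the crux you are trying to establish, so as written the argument is circular. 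That step (essentially Theorem~4.2.5 and its supporting lemmas in Gusfield--Irving) is the real content of the forward direction and cannot be deferred.

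There is also a much shorter route available inside the paper's own toolkit that you missed: under the paper's standing assumptions, a stable matching $M$ \emph{is} a valid subtable of $T$ --- condition a) is the assumption that no row is removed, condition b) is trivial for singleton lists, and condition c) for $zw\notin M$ reads ``$M(z)>_z w$ or $M(w)>_w z$'', which is exactly ``$zw$ does not block $M$''. Lemma~\ref{lem:all-valid-table-from-rotations} then yields $M=T/\mathcal{Z}$ immediately, with no greedy construction. Finally, your ``moreover'' clauses are too loose: that at most one of $\rho,\overline{\rho}$ is eliminated should be routed through Lemma~\ref{lem:eliminating-rotations-preserves-exposition} and Lemma~\ref{lem:only-rho-and-dual} (eliminating one collapses the relevant lists so the other can never be exposed); that at least one of each dual pair must appear needs an argument; and uniqueness of $\mathcal{Z}_T(M)$ does not follow from ``the rotation to eliminate at each step is determined by $M$'' --- several $M$-preserving rotations can be exposed simultaneously, so only the terminal \emph{set}, not the order, is forced, and that invariance requires its own proof.
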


Following the previous theorem, for $M\in\mathcal{S}(T)$, we let $\mathcal{Z}_{M}(T)$ denote the set of rotations in table $T$ such that $M=T/\mathcal{Z}_{M}(T)$.

\begin{lemma}\label{lem:stable-pairs} Let $xy \in T$. $xy $ is a stable pair if and only if one of the following holds: (i) there exists $\rho \in {\cal Z}_{ns}(T)$ such that $xy \in \rho$; (ii) $xy$ is a fixed pair. 
\end{lemma}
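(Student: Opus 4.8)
The plan is to establish both directions via the correspondence between stable matchings and rotations given by Theorem~\ref{thm:T_lem:eliminating-rotations-preserves-exposition}, after first passing to the non-singular part of the rotation structure using Lemma~\ref{lem:singular-first}. The two facts I will lean on constantly are: (i) by Lemma~\ref{lem:singular-first}, every stable matching of $T$ is a valid subtable of $T/\mathcal{Z}_s$ reached from $T/\mathcal{Z}_s$ by eliminating only non-singular rotations; and (ii) by Theorem~\ref{thm:T_lem:eliminating-rotations-preserves-exposition}, for each stable matching $M$ the set $\mathcal{Z}_T(M)$ of rotations eliminated to reach $M$ contains every singular rotation and exactly one rotation out of every dual pair, so in particular a rotation $\rho$ with $\rho\notin\mathcal{Z}_T(M)$ for some stable $M$ is automatically non-singular, and its dual $\overline{\rho}$ lies in $\mathcal{Z}(T)$.

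For the ``if'' direction, if $xy$ is a fixed pair then it belongs to every stable matching of $T$, and some stable matching exists since $T$ is solvable, so $xy$ is a stable pair. Suppose instead $xy\in\rho$ for a non-singular rotation $\rho=(x_0,y_0),\dots,(x_{r-1},y_{r-1})$, with $xy=x_ky_k$. Since $\overline{\rho}\in\mathcal{Z}(T)$, it is exposed at some valid subtable $T''$ of $T$, which by Lemma~\ref{lem:all-valid-table-from-rotations} is obtained from $T$ by a sequence of rotation eliminations. I would eliminate $\overline{\rho}$ from $T''$ and then, invoking Lemma~\ref{lem:always-a-rotation} and Lemma~\ref{lem:valid-stay-valid} repeatedly, eliminate exposed rotations until every preference list has length one; by Theorem~\ref{thm:T_lem:eliminating-rotations-preserves-exposition} the table obtained is a stable matching $M'$, and $\overline{\rho}\in\mathcal{Z}_T(M')$, hence $\rho\notin\mathcal{Z}_T(M')$. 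Unwinding Definition~\ref{def:rotation}, eliminating $\overline{\rho}$ deletes from each $x_i$'s list everything ranked below $y_i$, so that afterwards $y_i=\ell(x_i)$ and $x_i=f(y_i)$; since $\rho$ is the only rotation whose elimination could subsequently move $x_i$ off $y_i$, and $\rho$ is never eliminated en route to $M'$, each pair $x_iy_i$ persists and $M'(x_i)=y_i$. In particular $xy\in M'$, so $xy$ is a stable pair.

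For the ``only if'' direction, assume $xy\in M$ for some stable matching $M$. If $xy$ is in every stable matching, it is a fixed pair and we are done, so assume some stable matching $M^1$ has $M^1(x)\ne y$, and fix a stable matching $M^\circ$ with $M^\circ(x)=y$ (one exists since $xy$ is a stable pair). The key observation is that, as one eliminates non-singular rotations starting from $T/\mathcal{Z}_s$, the first entry of $x$'s list moves monotonically downward, and along the sequence reaching $M^\circ$ it comes to rest at $y$. I would then distinguish two cases. If $M^1$ ranks $x$'s partner below $y$, then along a sequence reaching $M^1$ the first entry of $x$ is moved strictly past $y$, and the rotation $\tau$ effecting that step has $(x,y)$ as one of its pairs; since $\tau\notin\mathcal{Z}_T(M^\circ)$, $\tau$ is non-singular, so $xy$ lies in the non-singular rotation $\tau$. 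If instead every stable matching ranks $x$'s partner weakly above $y$, then $y$ is strictly below the best stable partner of $x$ (witnessed by $M^1$), so along the sequence reaching $M^\circ$ the first entry of $x$ is moved \emph{onto} $y$ by some rotation $\tau$; reading off Definition~\ref{def:rotation}, $(x,y)$ is then one of the pairs of the dual $\overline{\tau}$, and $\tau\notin\mathcal{Z}_T(M^1)$ makes $\tau$ (hence $\overline{\tau}$) non-singular. Either way $xy$ belongs to a non-singular rotation.

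I expect the main obstacle to be the low-level bookkeeping of how preference lists change under a single rotation elimination, which underlies several of the claims above: that eliminating $\overline{\rho}$ really installs the pairs $x_iy_i$ and that no later elimination deletes them, and --- in the ``only if'' direction --- that $y$ is not deleted from $x$'s list before the first entry of $x$ reaches it, so that the rotation $\tau$ identified above genuinely exists and carries the pair $(x,y)$ (or $(y,x)$ in its dual). These are the routine but somewhat intricate consequences of the validity conditions of Definition~\ref{def:valid_table} and of the rotation lemmas recalled above, and I would discharge them following the analogous arguments in~\cite{Irving}.
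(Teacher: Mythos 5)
First, a point of reference: the paper never proves this lemma — it is listed among the facts imported from~\cite{Irving} — so there is no in-paper proof to match, though the paper's own (proved) Lemma~\ref{sm_dual} already delivers the non-trivial half of the ``if'' direction. Your overall strategy is the standard one, but the two steps you defer as ``routine bookkeeping'' are the entire content of the theorem, and one of them fails in the form you assert it. In the ``if'' direction, after eliminating $\overline{\rho}$ you have $y_i=\ell(x_i)$ and $x_i=f(y_i)$, and you claim that ``$\rho$ is the only rotation whose elimination could subsequently move $x_i$ off $y_i$.'' That is not so: in this configuration the edge $x_iy_i$ is deleted exactly by eliminating a rotation containing the \emph{ordered} pair $(y_i,x_i)$, which is a pair of neither $\rho$ (whose pairs are $(x_i,y_i)$) nor $\overline{\rho}$ (whose pairs are $(y_{i+1},x_i)$); such a third rotation can become exposed later whenever $y_i$'s list still has length at least two after $\overline{\rho}$ is eliminated, and your ``continue eliminating arbitrary exposed rotations'' gives no control over whether it gets eliminated. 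The correct route — the one used in the proof of Lemma~\ref{sm_dual} — is to keep $\rho$ exposed while eliminating other rotations (via Lemmas~\ref{lem:singular_expose} and~\ref{lem:eliminating-rotations-preserves-exposition}) until $\overline{\rho}$ is exposed \emph{simultaneously}; then Lemma~\ref{lem:only-rho-and-dual} shrinks every $x_i$'s and $y_i$'s list to the two rotation entries, so eliminating $\overline{\rho}$ leaves singletons and the pairs $x_iy_i$ are locked into the resulting stable matching.

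The ``only if'' direction has the gap you name yourself, and it is not routine: a pair $xy$ can be deleted ``from the side,'' by eliminating a rotation containing neither $(x,y)$ nor $(y,x)$ (it suffices that $x$ or $y$ occur as the second component of some pair of that rotation with the partner falling below the relevant threshold). Hence the rotation at the step where $f(x)$ crosses $y$ need not carry the pair $(x,y)$ unless one first proves that $y$ survives in $x$'s list until $f(x)$ reaches it, and that $f(x)$ lands on $y$ rather than jumping past it; establishing this is precisely where the stability of the pair must be used and is the heart of the argument in~\cite{Irving}. (A smaller symptom: in your second case the dual pair you extract is $(y,x)\in\overline{\tau}$, not $(x,y)$ — harmless for the unordered statement, but an indication that the bookkeeping has not actually been carried out. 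A cleaner handling of that case is to observe, via Lemma~\ref{symmetric_diff}, that if $y$ is $x$'s worst stable partner then $x$ is not $y$'s worst stable partner, and apply the first case with the roles of $x$ and $y$ exchanged.)
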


The next lemmas present further known properties of rotations.

\begin{lemma}\label{lem:singular_expose} $\rho$ is singular if and only if there is a valid subtable of $T$ in which $\rho$ is the only exposed rotation. 
\end{lemma}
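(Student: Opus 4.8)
The plan is to route the proof through the following intermediate characterization: \emph{$\rho$ is singular if and only if $\rho\in\mathcal{Z}_T(M)$ for every stable matching $M$ of $T$}. This intermediate equivalence follows from Theorem~\ref{thm:T_lem:eliminating-rotations-preserves-exposition} with little work. The forward implication is immediate, since that theorem asserts that $\mathcal{Z}_T(M)$ contains every singular rotation. For the converse I argue by contraposition: if $\rho$ is non-singular then, by Definition~\ref{rotation_singularity}, its dual $\overline{\rho}$ lies in ${\cal Z}(T)$, hence $\overline{\rho}$ is exposed at some valid subtable $S$; eliminating $\overline{\rho}$ from $S$ and then repeatedly eliminating exposed rotations (always possible by Lemma~\ref{lem:always-a-rotation}, staying inside the family of valid subtables by Lemma~\ref{lem:valid-stay-valid}) produces a valid subtable with one entry per row, which by the last assertion of Theorem~\ref{thm:T_lem:eliminating-rotations-preserves-exposition} is a stable matching $M$; since $\overline{\rho}\in\mathcal{Z}_T(M)$ and $\mathcal{Z}_T(M)$ contains exactly one of each dual pair, $\rho\notin\mathcal{Z}_T(M)$.

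For the implication $(\Rightarrow)$ of Lemma~\ref{lem:singular_expose}, suppose $\rho$ is singular, so by the above $\rho$ lies on every sequence of rotation eliminations from $T$ to a stable matching. Among all valid subtables of $T$ in which $\rho$ is exposed, pick one, $T^{\rho}$, with the fewest pairs (such a subtable exists because $\rho\in{\cal Z}(T)$ and preference tables are finite). I claim $\rho$ is the only rotation exposed in $T^{\rho}$. Suppose some $\sigma\neq\rho$ is also exposed there. The key step is to show that $\rho$ remains exposed in $T^{\rho}/\sigma$; granting this, $T^{\rho}/\sigma$ is a valid subtable (Lemma~\ref{lem:valid-stay-valid}) exposing $\rho$ with strictly fewer pairs than $T^{\rho}$, contradicting minimality, so $\rho$ is the unique exposed rotation in $T^{\rho}$ and we are done. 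To see that eliminating $\sigma$ cannot destroy the exposition of $\rho$, one observes first that two rotations exposed at the same valid subtable have disjoint $X$-sets: otherwise, since the first two entries of a common $X$-agent's list, together with condition~\ref{valid_cond:first-is-last} of Definition~\ref{def:valid_table}, determine all subsequent pairs of each rotation (exactly as in the uniqueness argument recalled for the marriage case), the two rotations would coincide. One then combines this with the fact that eliminating $\sigma$ only truncates the preference lists of agents in $Y(\sigma)$; the singularity of $\rho$ is precisely what rules out the remaining troublesome configuration, in which such a truncation shortens the list of an $X$-agent of $\rho$ past its second entry (see~\cite{Irving} for the detailed verification).

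For the implication $(\Leftarrow)$, suppose $T'$ is a valid subtable in which $\rho$ is the unique exposed rotation, and suppose towards a contradiction that $\rho$ is non-singular, so $\overline{\rho}\in{\cal Z}(T)$. Using the structural properties of dual rotations from~\cite{Irving}, one shows that whenever $\rho$ is exposed in a valid subtable $S$ there is necessarily a \emph{second} rotation $\sigma\neq\rho$ exposed in $S$ --- intuitively, a rotation that must be eliminated along any path that will eventually expose $\overline{\rho}$, and which is available precisely because neither $\rho$ nor $\overline{\rho}$ has yet been eliminated. Applied to $S=T'$ this contradicts the hypothesis that $\rho$ is the unique exposed rotation; hence $\rho$ is singular.

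The main obstacle is the same in both directions: controlling the interaction between a rotation, its dual, and the other rotations exposed simultaneously with it --- a phenomenon that does not arise in the marriage case, where the distributive lattice structure makes rotations exposed together behave independently and commute. In $(\Rightarrow)$ this is the need to check that an exposed $\sigma\neq\rho$ can be eliminated without un-exposing $\rho$ (false in general, true here because $\rho$ is singular), and in $(\Leftarrow)$ it is the need to produce a witnessing second exposed rotation whenever $\rho$ is non-singular; both rely on the finer combinatorics of the roommate rotation poset developed in~\cite{Irving}.
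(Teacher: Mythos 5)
The paper does not actually prove this lemma: the preamble to Section~\ref{sec:nonbipartite_rotations} states that all proofs in that block of results are taken from~\cite{Irving}, so there is no in-paper argument to measure yours against. Judged on its own terms, your forward direction is essentially correct, and in fact simpler than you make it look: once you note that singularity of $\rho$ means $\overline{\rho}\notin{\cal Z}(T)$, the paper's own Lemma~\ref{lem:eliminating-rotations-preserves-exposition} (either $\rho$ stays exposed in $T'/\sigma$, or $\sigma=\overline{\rho}$) disposes of exactly the ``troublesome configuration'' you defer to~\cite{Irving}. Combined with your minimal-subtable argument and Lemma~\ref{lem:valid-stay-valid} (plus the observation that eliminating a rotation strictly decreases the number of pairs), that direction closes without any external citation. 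Your opening ``intermediate characterization'' ($\rho$ singular iff $\rho\in{\cal Z}_T(M)$ for all stable $M$) is correct but is never actually used in either direction, so it is dead weight.

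The backward direction is where the proposal has a genuine gap. The claim you need --- that whenever a non-singular $\rho$ is exposed in a valid subtable, a second rotation is exposed there as well --- is precisely the contrapositive of the implication you are proving, and you support it only with an intuition and a pointer to~\cite{Irving}, the very source from which the lemma itself is being imported; as a proof this is circular. Nothing already available in the paper makes the step immediate: Lemma~\ref{lem:eliminating-rotations-preserves-exposition} describes what happens when two rotations are exposed together, not why a second one must exist; and while Theorem~\ref{thm:T_lem:eliminating-rotations-preserves-exposition} guarantees a stable matching $M'$ with $\overline{\rho}\in{\cal Z}_T(M')$ and hence $\rho\notin{\cal Z}_T(M')$, this does not contradict $\rho$ being the unique exposed rotation of some valid subtable $T'$, because $M'$ need not be reachable from $T'$ by further eliminations. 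An actual argument has to relate the eliminations that produce $T'$ to the elimination sets of \emph{all} stable matchings of $T$; that is the substantive content of the corresponding lemma in~\cite{Irving}, and it is exactly the piece that is missing from your write-up.
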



\begin{lemma}\label{lem:eliminating-rotations-preserves-exposition} Suppose that $\rho$  and $\sigma$ are exposed in $T'$. Then either $\rho$ is exposed in $T'/\sigma$, or $\sigma=\overline{\rho}$.
\end{lemma}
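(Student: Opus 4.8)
The statement to prove is: if $\rho$ and $\sigma$ are both exposed in a valid subtable $T'$, then either $\rho$ is exposed in $T'/\sigma$, or $\sigma = \overline\rho$. Let me think about how I would approach this.

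Let me recall the definitions. A rotation $\rho = (x_0,y_0),\dots,(x_{r-1},y_{r-1})$ is exposed in $T'$ when $y_i = f_{T'}(x_i)$ and $y_{i+1} = s_{T'}(x_i)$ for all $i$ (and consequently $x_i = \ell_{T'}(y_i)$). Eliminating $\sigma = (a_0,b_0),\dots,(a_{s-1},b_{s-1})$ means deleting all pairs $b_j z$ where $b_j$ strictly prefers $a_{j-1}$ to $z$. So the only rows of $T'$ that change when we pass to $T'/\sigma$ are: the rows of the $Y$-agents $b_j$ of $\sigma$ (which lose entries from the bottom, down to $a_{j-1}$ becoming the new last entry), and correspondingly the rows of the $X$-agents that get $b_j$ deleted from their list.

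Now consider $\rho$. For $\rho$ to still be exposed in $T'/\sigma$, I need $f(x_i)$ and $s(x_i)$ to be unchanged for each $x_i \in X(\rho)$. The key point is: $y_i = f_{T'}(x_i)$ is the first entry, and eliminating $\sigma$ only ever *deletes* entries, never reorders them, and it deletes entries from the *tail* of $Y$-agents' lists — so the first entry $y_i$ of $x_i$'s list could only disappear if $x_i y_i$ itself is deleted, which happens only if $y_i = b_j$ for some $j$ and $x_i$ is strictly worse than $a_{j-1}$ in $y_i$'s list. But $x_i = \ell_{T'}(y_i)$ since $\rho$ is exposed, so $x_i$ is the *worst* in $y_i$'s list; the only way $x_i y_i$ survives is if $x_i = a_{j-1}$, i.e. $y_i = b_j$ and $a_{j-1} = x_i$. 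Chasing this, I'd want to show that if any row of $X(\rho)$ is touched by the elimination of $\sigma$, then $\sigma$ and $\rho$ must be "dual" to each other — essentially the $X$-set of one is the $Y$-set of the other with the appropriate index shift, forcing $\sigma = \overline\rho$. Similarly I need to check the *second* entry $s(x_i) = y_{i+1}$ is not deleted: $y_{i+1}$ leaves $x_i$'s list only if $y_{i+1} = b_j$ and $x_i$ is below $a_{j-1}$ in $y_{i+1}$'s list; since $x_i = \ell_{T'}(y_{i+1})$... wait, that's not automatic — $x_i$ need not be last in $y_{i+1}$'s list. Here I'd use instead that in $T'$, $y_{i+1} = s_{T'}(x_i)$ and $x_{i+1} = \ell_{T'}(y_{i+1})$ with $x_i >_{y_{i+1}} x_{i+1}$, so actually $x_i$ is the *second-to-last* (or at least above the last) in $y_{i+1}$'s list. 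If the elimination of $\sigma$ deletes $x_i y_{i+1}$, then $y_{i+1} = b_j$ for some $j$ and $a_{j-1} >_{y_{i+1}} x_i$; but then $a_{j-1}$ is strictly above $x_i$ which is strictly above $\ell_{T'}(y_{i+1}) = x_{i+1}$, so $a_{j-1} \neq x_{i+1}$, and also we'd need to reconcile this with $b_j = f_{T'}(a_j)$, etc.

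So the plan is: assume $\rho$ is *not* exposed in $T'/\sigma$; then some row of $X(\rho)$ has its first or second entry altered by the elimination of $\sigma$; trace which $(a_j, b_j)$ pair of $\sigma$ is responsible; and then use the rigidity of the "first = last" structure (condition \ref{valid_cond:first-is-last} of valid tables) together with the defining relations $y_i = f_{T'}(x_i)$, $y_{i+1} = s_{T'}(x_i)$ to deduce that the $Y$-agents of $\sigma$ are exactly the $X$-agents of $\rho$ and vice versa, matched up with the index shift in \eqref{eq:dual-rotation}, which is precisely the assertion $\sigma = \overline\rho$. The main obstacle I anticipate is the bookkeeping in the "second entry" case: showing that an alteration of $s_{T'}(x_i)$ (as opposed to $f_{T'}(x_i)$) also forces the dual relationship — one has to carefully use that $s_{T'}(x_i) = y_{i+1}$, that $x_i$ sits just above $x_{i+1} = \ell_{T'}(y_{i+1})$ in $y_{i+1}$'s list, and that $\sigma$ exposed in $T'$ means $b_j = f_{T'}(a_j)$, $a_j = \ell_{T'}(b_j)$, and chain these to conclude the index alignment. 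I would likely organize this as two short claims (one handling the first-entry change, one the second-entry change), each concluding $\sigma = \overline\rho$, which together finish the proof. This is exactly the kind of statement proved in Irving's book, so I would cite~\cite{Irving} for the detailed verification once the structural skeleton above is in place.
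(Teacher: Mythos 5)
The paper does not actually prove this lemma: it sits in the block prefaced by ``all proofs and concepts from this section can be found in~\cite{Irving}'', so the only question is whether your sketch would go through, and there is one genuine gap in it.

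Your case analysis of how a pair of $\rho$ can be destroyed by eliminating $\sigma=(a_0,b_0),\dots,(a_{s-1},b_{s-1})$ only ever considers deletions triggered by $y_i$ or $y_{i+1}$ lying in $Y(\sigma)$. But pair deletion is symmetric and the roommate model has no bipartition, so the pair $x_iy_{i+1}$ is also deleted when $x_i\in Y(\sigma)$ itself, i.e.\ $x_i=b_j$ and $a_{j-1}>_{x_i}y_{i+1}$. This omitted branch is precisely the one that produces the exceptional conclusion: since $y_{i+1}=s_{T'}(x_i)$, the condition $a_{j-1}>_{x_i}y_{i+1}$ forces $a_{j-1}=f_{T'}(x_i)=y_i$, and chasing this around the cycle aligns $X(\sigma)$ with $Y(\rho)$ and $Y(\sigma)$ with $X(\rho)$ exactly as in~\eqref{eq:dual-rotation}, giving $\sigma=\overline{\rho}$. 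By contrast, the two cases you do treat both collapse to $\rho=\sigma$ (excluded by the standing assumption $\rho\neq\sigma$), not to $\sigma=\overline{\rho}$: if $y_i=b_j$ then $x_i=\ell_{T'}(y_i)=\ell_{T'}(b_j)=a_j\in X(\sigma)\cap X(\rho)$, and two distinct rotations exposed in the same table cannot share an $X$-agent (an $X$-agent together with $f_{T'},s_{T'},\ell_{T'}$ determines the entire cyclic sequence); likewise $y_{i+1}=b_j$ gives $x_{i+1}=a_j$. So, as written, your argument would ``prove'' that $\rho$ is always exposed in $T'/\sigma$ whenever $\sigma\neq\rho$, which is false: eliminating $\overline{\rho}$ deletes every entry of $x_j$'s list except $y_j=f_{T'}(x_j)$ (each such entry $z$ satisfies $a_{j-1}=y_j>_{x_j}z$), so $\rho$ is certainly not exposed afterwards. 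The fix is to add the $x_i\in Y(\sigma)$ branch and carry out the index-alignment chase there; the rest of your skeleton (eliminations only shorten lists, first entries are safe because $y_i=f_{T'}(x_i)$, condition~b) of valid tables gives $x_i=\ell_{T'}(y_i)$) is sound.
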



\begin{lemma}\label{lem:only-rho-and-dual}
Suppose $\rho$ is non-singular. If both $\rho$ and its dual $\overline{\rho}$ are exposed in $T'$, then, for each $i \in [r-1]_0$, the list of $x_i$ in $T$ contains only $y_i$ and $y_{i+1}$, and the list of $y_i$ in $T$ contains only $x_{i-1}$ and $x_i$. The elimination of $\rho$ or $\overline{\rho}$ from $T'$ reduces the list of each $x_i$ and each $y_i$ to a single entry.
\end{lemma}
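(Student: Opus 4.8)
The plan is to prove both assertions by directly unfolding Definition~\ref{def:rotation} (``rotation exposed in a valid subtable'') together with property b) of Definition~\ref{def:valid_table} (the ``first $=$ last'' correspondence of valid tables), applied to \emph{both} $\rho$ and its dual $\overline\rho$, and then simply comparing the constraints the two ``exposed'' conditions put on the first, second, and last entries of the relevant preference lists.

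First I would record what ``$\rho$ exposed in $T'$'' gives. By Definition~\ref{def:rotation}, for every $i\in[r-1]_0$ we have $y_i=f_{T'}(x_i)$ and $y_{i+1}=s_{T'}(x_i)$, and by property b) of Definition~\ref{def:valid_table} applied to the pair $x_iy_i$ this forces $x_i=\ell_{T'}(y_i)$. Then I would do the same for $\overline\rho=(y_1,x_0),(y_2,x_1),\dots,(y_0,x_{r-1})$: writing its $k$-th pair as $(y_{k+1},x_k)$ and applying Definition~\ref{def:rotation} to $\overline\rho$ yields $x_k=f_{T'}(y_{k+1})$ and $x_{k+1}=s_{T'}(y_{k+1})$, while property b) of Definition~\ref{def:valid_table} (applied to $y_{k+1}x_k$) gives $y_{k+1}=\ell_{T'}(x_k)$; re-indexing by $i=k+1$ these read $x_{i-1}=f_{T'}(y_i)$, $x_i=s_{T'}(y_i)$, and $y_{i+1}=\ell_{T'}(x_i)$. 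Combining the two sets of facts: the preference list of $x_i$ has first entry $y_i$, second entry $y_{i+1}$, and last entry $y_{i+1}$; since its second and last entries coincide, it consists of exactly the two entries $y_i,y_{i+1}$. Symmetrically, the list of $y_i$ has first entry $x_{i-1}$, second entry $x_i$, and last entry $x_i$, hence consists of exactly the two entries $x_{i-1},x_i$. This settles the first claim.

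For the second claim I would compute the effect of the two eliminations directly from the recipe in Definition~\ref{def:rotation}. Eliminating $\rho$ from $T'$ deletes every pair $y_iz$ with $z<_{y_i}x_{i-1}$; on the two-entry list $(x_{i-1},x_i)$ of $y_i$ the only such $z$ is $x_i$, so exactly the pair $x_iy_i$ is removed, leaving $y_i$ with the single entry $x_{i-1}$ and $x_i$ with the single entry $y_{i+1}$. The symmetric computation for $\overline\rho$ (whose elimination deletes the pairs $x_kz$ with $z<_{x_k}y_k$, i.e.\ precisely $x_ky_{k+1}$) removes exactly $x_ky_{k+1}$, leaving each $x_k$ with the single entry $y_k$ and each $y_{k+1}$ with the single entry $x_{k+1}$. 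A small side-check I would include is that these are the \emph{only} deleted pairs incident to the $x_i$'s and $y_i$'s, which is immediate since each such agent sits on just two lists.

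The only real obstacle here is bookkeeping: one has to apply Definition~\ref{def:rotation} to $\overline\rho$ with the $x$- and $y$-roles swapped and the index shifted by one, so that its conclusions line up with those coming from $\rho$. Once the indices are aligned, the ``second entry $=$ last entry, hence the list has length two'' observation is instantaneous. (I also note that in the statement the lists should be understood inside the working table $T'$ in which $\rho$ and $\overline\rho$ are exposed, which is also the table from which the eliminations are performed, as the last sentence of the lemma makes explicit.)
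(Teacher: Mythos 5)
Your proof is correct. The paper itself does not prove this lemma: it is listed among the known facts about roommate rotations imported from Gusfield and Irving's book (it is their Lemma~4.2.5), and your argument is essentially the standard one given there --- unfold the ``exposed'' conditions for both $\rho$ and $\overline\rho$, use property b) of Definition~\ref{def:valid_table} to convert first-entry statements into last-entry statements, and observe that the second and last entries of each relevant list coincide, so the list has length two; the elimination computation then follows directly from the deletion rule in Definition~\ref{def:rotation}. Your closing remark is also right: the occurrence of ``$T$'' in the statement should be read as the working table $T'$ in which both rotations are exposed, exactly as in the source.
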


\paragraph{A new fact.} Next, we prove an additional property of rotations that will be useful for our analysis. Recall that we assume that $T$ is a solvable roommate instance and that $\rho \in {\cal Z}(T)$ has the form~\eqref{eq:rho}.

\begin{lemma}\label{sm_dual}
Let $\rho$ be non-singular. There exists $M_1,M_2 \in {\cal S}(T)$ such that: $x_iy_i\in M_1$ and $x_{i-1}y_i\in M_2$ for all $i \in [r-1]_0$; the symmetric difference $G[M_1\triangle M_2]$ contains only singletons and one even cycle, and the latter contains all and only the nodes in the X-set and Y-set of $\rho$.
\end{lemma}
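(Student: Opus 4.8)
The plan is to exploit the structural dictionary established in this section between rotations, valid subtables, and stable matchings. Since $\rho$ is non-singular, its dual $\overline\rho$ of the form~\eqref{eq:dual-rotation} lies in ${\cal Z}(T)$. First I would produce the two matchings. By Lemma~\ref{lem:all-valid-table-from-rotations} applied suitably, there is a valid subtable $T'$ of $T$ at which $\rho$ is exposed; by Lemma~\ref{lem:valid-stay-valid} and Lemma~\ref{lem:always-a-rotation} we can keep eliminating rotations from $T'$ (never needing to eliminate $\overline\rho$, since eliminating $\rho$ first makes $\overline\rho$ unavailable — here Lemma~\ref{lem:eliminating-rotations-preserves-exposition} and Theorem~\ref{thm:T_lem:eliminating-rotations-preserves-exposition} are the relevant tools) until every list has length one, obtaining a stable matching. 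Eliminating $\rho$ first forces $x_iy_i\in M_1$ for the resulting $M_1$ (in $T'/\rho$ each $x_i$ has $y_{i+1}=s_T(x_i)$ as first entry; wait — more carefully, by Definition~\ref{def:rotation} eliminating $\rho$ makes $y_{i+1}=f(x_i)$, so actually I should eliminate $\overline\rho$ to fix the edges $x_iy_i$; I would set it up so that the matching $M_1$ keeping edges $x_iy_i$ comes from eliminating $\overline\rho$, and $M_2$ keeping edges $x_{i-1}y_i$ comes from eliminating $\rho$). Concretely: eliminate $\rho$ from $T'$ and complete to a stable matching $M_2$; then in $M_2$, $f_{T'/\rho}(x_i)=y_{i+1}$, i.e.\ $x_iy_{i+1}\in M_2$ for all $i$, equivalently $x_{i-1}y_i\in M_2$. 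Symmetrically, eliminate $\overline\rho$ from $T'$ and complete to a stable matching $M_1$ with $x_iy_i\in M_1$.

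The second step is the symmetric-difference claim. Since $M_1,M_2\in{\cal S}(T)$, they do not block each other, so Lemma~\ref{symmetric_diff} applies: $G[M_1\triangle M_2]$ is a disjoint union of singletons and even cycles, with no irregular edge. Every node in $X(\rho)\cup Y(\rho)$ is matched differently in $M_1$ and $M_2$ (e.g.\ $M_1(x_i)=y_i\neq y_{i+1}=M_2(x_i)$, using $r\ge 2$ and distinctness of the entries of $\rho$, cf.\ Lemma~\ref{lem:only-rho-and-dual}), hence all these $2r$ nodes lie on cycles of $G[M_1\triangle M_2]$. The edges $\{x_iy_i\}_i\cup\{x_iy_{i+1}\}_i$ already form a single $2r$-cycle on exactly these nodes alternating between $M_1$ and $M_2$, so I must argue this accounts for \emph{all} of $M_1\triangle M_2$, i.e.\ no node outside $X(\rho)\cup Y(\rho)$ is matched differently. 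The main obstacle is precisely this: ruling out that the two completions of the rotation eliminations diverge somewhere away from $\rho$. I expect to handle it by uniqueness in Theorem~\ref{thm:T_lem:eliminating-rotations-preserves-exposition}: since $M_1=T/{\cal Z}_T(M_1)$ and $M_2=T/{\cal Z}_T(M_2)$ with ${\cal Z}_T(M_i)$ containing all singular rotations and exactly one of each dual pair, and since by Lemma~\ref{lem:eliminating-rotations-preserves-exposition} only one of $\rho,\overline\rho$ can be eliminated along any sequence, I can choose the two completions to use the \emph{same} set of other rotations — formally, after eliminating $\rho$ (resp.\ $\overline\rho$) from $T'$, by Lemma~\ref{lem:only-rho-and-dual} the lists of all nodes in $X(\rho)\cup Y(\rho)$ become singletons, and the remaining table is identical in both cases, so completing it to a stable matching can be done identically. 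Therefore $M_1\triangle M_2$ is supported exactly on $X(\rho)\cup Y(\rho)$, and it forms the claimed single even cycle of length $2r$. This completes the proof.

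One technical point I would verify carefully: that $\rho$ and $\overline\rho$ are both exposed in the \emph{same} valid subtable $T'$. This is where Lemma~\ref{lem:only-rho-and-dual} is used for its hypothesis — I would invoke the standard fact (from~\cite{Irving}, or derivable from Lemma~\ref{lem:eliminating-rotations-preserves-exposition} and non-singularity) that a non-singular rotation and its dual are simultaneously exposed in some valid subtable; then Lemma~\ref{lem:only-rho-and-dual} tells us the lists of the $2r$ involved nodes contain only the two relevant entries, which immediately gives the disjointness of the cycle from the rest of the table and makes the "identical completion" argument clean.
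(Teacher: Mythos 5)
Your proposal is correct and follows essentially the same route as the paper: reduce to a valid subtable in which both $\rho$ and $\overline{\rho}$ are exposed (the paper derives this exactly as you suggest, by iterating Lemma~\ref{lem:eliminating-rotations-preserves-exposition} together with Lemma~\ref{lem:singular_expose}), invoke Lemma~\ref{lem:only-rho-and-dual} to localize the two eliminations to $X(\rho)\cup Y(\rho)$, and conclude via Lemma~\ref{symmetric_diff}. The only cosmetic difference is that the paper pushes the reduction further until $\rho,\overline{\rho}$ are the \emph{only} exposed rotations (so that $T''/\rho$ and $T''/\overline{\rho}$ are already matchings), whereas you keep a general such subtable and argue the two completions can be chosen identically outside $X(\rho)\cup Y(\rho)$ — both arguments are sound.
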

\begin{proof}
By definition of rotation, there exists a full execution of Phase 1, followed by a partial execution of Phase 2 leading from $T$ to a table $T''$ such that rotation $\rho$ is exposed in $T''$. By~Lemma~\ref{lem:singular_expose}, there must exist some rotation other than $\rho$ that is exposed in $T''$. 

\smallskip

\emph{Case a): $\overline{\rho}$ is exposed in $T''$}. By Lemma~\ref{lem:only-rho-and-dual}, for $i \in [r-1]_0$, the preference lists of $x_i$ contains $y_i$ and $y_{i+1}$ only, and the preference list of $y_i$ contains $x_{i}$ and $x_{i-1}$ only. 

Suppose first $\rho, \overline{\rho}$ are the only rotations exposed in $T''$. We claim that every agent $z$ that does not belong to the $X$-sets of $\rho, \overline{\rho}$ has a single element in their preference list. If this is not the case, then $T''/\rho$ also contains an agent $z$ with two agents in their preference list. By Lemma~\ref{lem:always-a-rotation}, there is a rotation $\pi$ exposed at $T''/\rho$. Since each agent $x_i, y_i$ with $i \in [r-1]_0$ has exactly one agent in their preference list in $T''$ (again by Lemma \ref{lem:only-rho-and-dual}), the $X$-set and the $Y$-set of $\pi$ do not intersect $\{x_i,y_i\}_{i \in [r-1]_0}$. Hence, $\pi$ is also exposed in $T''$, a contradiction. Thus, the preference lists of $T''$ are as follows: agents in the $X$-sets of $\rho$ and $\overline{\rho}$ have preference lists of length $2$; every other agent has a preference lists of length $1$. Hence, $T''/\rho$ (resp.~$T''/\overline{\rho}$) corresponds to a stable matching $M_1$ (resp.~$M_2$) by Theorem~\ref{thm:T_lem:eliminating-rotations-preserves-exposition}. It is easy to check that $M_1$ and $M_2$ satisfy the property required by the thesis of the lemma.

Hence, suppose that there is a rotation $\pi \neq \rho,\overline {\rho}$ exposed in $T''$. By Lemma~\ref{lem:eliminating-rotations-preserves-exposition}, both $\rho$ and $\overline{\rho}$ are exposed in $T''/\pi$. We can therefore iterate the argument on $T''/\pi$, eventually obtaining a valid subtable $T''$ where $\rho$ and $\overline{\rho}$ are the only exposed rotations. This case has been already investigated above.

\smallskip

\emph{Case b): $\overline{\rho}$ is not exposed in $T''$.} Let $\sigma\neq \rho$ be any rotation exposed in $T''$. By Lemma~\ref{lem:eliminating-rotations-preserves-exposition}, $\rho$ is also exposed in $T''/\sigma$. Hence, we can iterate the argument on $T''/\sigma$. Since we know by Lemma~\ref{lem:singular_expose} that $\rho$ cannot be the only rotation exposed in $T''$, we eventually, obtain that $\overline{\rho}$ is exposed in $T''$. We are thus in Case a) above. \end{proof}

\begin{definition}\label{def:disjoint_rotations}
We say that a set of rotations $\rho^1,\rho^2,\dots,\rho^k$ where $\rho^q=(x_1^q,y_1^q), \dots (x_{n_q}^q,y_{n_q}^q)$ for $q \in[k]$ are \emph{disjoint} if the sets of pairs contained in each rotation are disjoint. In other words, $\{x_1^{q_1}y_1^{q_1}, \dots ,x_{n_{q_1}}^{q_1}y_{n_{q_1}}^{q_1}\}\cap \{x_1^{q_2}y_1^{q_2}, \dots ,x_{n_{q_2}}^{q_2}y_{n_{q_2}}^{q_2}\}=\emptyset$ for any $1\leq q_1 < q_2 \leq k$.
\end{definition}

Similarly to the marriage case, for $\rho$ as in~\eqref{eq:rho}, we let $E(\rho) = \cup_{i\in[r-1]_0}\{ x_iy_{i}, x_{i}y_{i+1} \} $, with indices taken modulo $r$ as usual.

\subsubsection{Antipodal edges and stitched rotations}\label{sec:antipodal_edge_stitched_rotations}

Throughout this subsection, we keep assuming that $T$ is a solvable roommate instance and let $T^*=E_S(T)$ be the subtable of $T$ containing all and only the stable edges. Note that $E_S(T^*)=T^*$ and that each $T'$ with $T^*\subseteq T' \subseteq T$ is also solvable. In particular, all properties of valid tables developed in Section \ref{sec:nonbipartite_rotations} for subtables of $T$ apply to subtables of $T^*$.

\paragraph{Antipodal Edges.}
Recall that $\Ss(T^*)$ is internally stable by Lemma \ref{lem:internally-stable-are-stable}.
We first argue that to ``expand" ${\cal S}(T^*)$ to a strictly larger  internally stable set of matchings, i.e., to find a  stable table $T'\supseteq T^*$ such that $\Ss(T')\supsetneq \Ss(T^*)$, any additional edge $e\in T'\setminus T^*$ must satisfy what we call the \emph{antipodal} condition, as defined below. 
\begin{definition}
We say that an edge $e=xy\in T\setminus T^{*}$ satisfies the antipodal
condition (wrt $T^{*}$) if exactly one of the following
is true:
\begin{itemize}
\item $ y >_x f_{T^{*}}(x)$ and $x <_y  \ell_{T^{*}}(y)$, or 
\item $x >_yf_{T^{*}}(y)$ and $ y >_x \ell_{T^{*}}(y)$. 
\end{itemize}
\end{definition}

\begin{lemma}\label{lem:wlog_antipodal}
Let $e \in T\setminus T^*$. Assume that $e$  is not an antipodal edge. Then every $M \in {\cal M}(T)$ with $e \in M$ is blocked by some edge from $T^*$.  
\end{lemma}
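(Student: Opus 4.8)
The plan is to argue by contradiction. Fix a matching $M\in\mathcal{M}(T)$ with $e=xy\in M$ and assume, for contradiction, that no edge of $T^{*}$ blocks $M$; I will show that $e$ is then antipodal, contrary to hypothesis. The first step is to exploit $e\notin T^{*}$. Since $T^{*}=E_{S}(T)$ is a valid subtable of $T$ (this is standard -- e.g.\ it follows from $T^{*}=T/\mathcal{Z}_{s}$ via Lemma~\ref{lem:singular-first}, Lemma~\ref{lem:stable-pairs} and iterated application of Lemma~\ref{lem:valid-stay-valid}, or directly from~\cite{Irving}), condition (c) of Definition~\ref{def:valid_table} applied to $xy\in T\setminus T^{*}$ gives $\ell_{T^{*}}(x)>_{x}y$ or $\ell_{T^{*}}(y)>_{y}x$. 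By the symmetry of $e$, of the antipodal condition, and of the statement, I may assume the former, i.e.\ $x$ strictly prefers each of its stable partners to $M(x)=y$. This already excludes the first alternative of the antipodal condition (which would force $y>_{x}f_{T^{*}}(x)\geq_{x}\ell_{T^{*}}(x)>_{x}y$), so the hypothesis ``$e$ is not antipodal'' reduces to $f_{T^{*}}(y)>_{y}x$, that is, $y$ strictly prefers its best stable partner to $M(y)=x$.

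For the main step, set $w:=f_{T^{*}}(y)$; since $yw\in T^{*}$ is a stable edge, choose $\hat{M}\in\mathcal{S}(T)$ with $\hat{M}(y)=w$. Because $\hat{M}$ is stable, $M$ does not block $\hat{M}$; because $\hat{M}\subseteq T^{*}$ and, by assumption, no edge of $T^{*}$ blocks $M$, $\hat{M}$ does not block $M$. Hence Lemma~\ref{symmetric_diff} applies to the pair $M,\hat{M}$: $G[M\triangle\hat{M}]$ is a disjoint union of singletons and even cycles, it has no irregular edge, and an agent is matched in $M$ iff it is matched in $\hat{M}$. Since $x$ is matched in $M$ and $\hat{M}(x)\neq y$ (as $xy\notin T^{*}\supseteq\hat{M}$), the agent $x$ lies on an even cycle of $G[M\triangle\hat{M}]$ whose $M$-edge incident to $x$ is $xy$; in particular $xy$, being an edge of $M\cup\hat M$, is not irregular for $M,\hat{M}$. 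Moreover $x$ is matched in $\hat{M}$, so $\hat{M}(x)$ is a stable partner of $x$ and $\hat{M}(x)\geq_{x}\ell_{T^{*}}(x)>_{x}y=M(x)$; thus $x$ strictly prefers $\hat{M}$ to $M$. Since $xy$ is not irregular, $y$ cannot strictly prefer $\hat{M}$ to $M$, and since $\hat{M}(y)=w\neq x=M(y)$ and preferences are strict this forces $M(y)=x>_{y}w=f_{T^{*}}(y)$, contradicting $f_{T^{*}}(y)>_{y}x$. Therefore some edge of $T^{*}$ blocks $M$, as claimed.

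The only genuinely delicate point is the reduction in the first step, namely that $xy\notin T^{*}$ forces one endpoint of $e$ to be matched (under $M$) strictly below all of its stable partners; this is precisely property (c) of a valid subtable for $T^{*}$, and I would make sure the fact that $T^{*}=E_{S}(T)$ is a valid subtable is available before invoking it. Given that, the remainder is a short one-cycle argument using Lemma~\ref{symmetric_diff}, with no case distinction needed beyond the single symmetric reduction between $x$ and $y$.
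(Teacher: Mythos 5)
There is a genuine gap, and it sits exactly at the point you yourself flag as delicate. Your Step~1 asserts that $T^{*}=E_{S}(T)$ satisfies condition~(c) of Definition~\ref{def:valid_table} (the \emph{subtable} condition), so that every $xy\in T\setminus T^{*}$ has an endpoint, say $x$, with $\ell_{T^{*}}(x)>_{x}y$. But Lemma~\ref{lem:T-star-is-valid} only establishes that $T^{*}$ is a valid \emph{table}, i.e.\ conditions (a) and (b); condition (c) is not claimed for $T^{*}$ and is false in general. An unstable edge can be ``sandwiched'' on both sides, $f_{T^{*}}(x)>_{x}y>_{x}\ell_{T^{*}}(x)$ and $f_{T^{*}}(y)>_{y}x>_{y}\ell_{T^{*}}(y)$ (these are exactly the edges that make condition d) of the rotation definition non-vacuous), and for such an edge neither disjunct of (c) holds, so your symmetric reduction never fires. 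Your parenthetical justification is also off: Lemma~\ref{lem:singular-first} gives $T^{*}\subseteq T/\mathcal{Z}_{s}$, not equality, and the containment is strict precisely when such edges exist --- indeed the paper's own Case~4 exhibits an unstable $xy$ that survives into $T/\mathcal{Z}_{s}$. If you try to run your one-cycle argument on a sandwiched edge anyway, it collapses: with $\hat{M}(y)=f_{T^{*}}(y)$ you can no longer guarantee $\hat{M}(x)>_{x}y$ (one may have $\hat{M}(x)=\ell_{T^{*}}(x)<_{x}y$), so $x$ may prefer $M$ to $\hat{M}$, the non-irregularity of $xy$ then only yields $\hat{M}(y)\geq_{y}x$, which is consistent with the sandwiched hypothesis, and no contradiction results.

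What you have correctly proved is the easy half of the lemma: when some endpoint is matched below all of its stable partners, your use of Lemma~\ref{symmetric_diff} (no irregular edge on the cycle through $x$) is sound and is essentially the paper's Cases~2--3, done a bit more cleanly; the remaining ``both endpoints above all stable partners'' configuration cannot occur since such an edge would block a stable matching (the paper's Case~1). The missing content is the doubly sandwiched case, which is where the paper spends most of its effort: it shows $xy$ survives Phase~1 and the elimination of all singular rotations, identifies the non-singular rotation whose elimination deletes $xy$, and from it constructs a \emph{specific} stable matching $M_{s}$ with $M_{s}(x)=y'>_{x}y$ and $M_{s}(y)>_{y}x$ simultaneously, after which the irregular-edge argument applies. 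Your proof needs an argument of this kind (or some other way to produce a stable matching that both endpoints strictly prefer to $xy$) before it can be considered complete.
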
 
\begin{proof}
Let $xy\in T\setminus T^{*}$, and $M\in{\cal M}(T)$ such that $xy \in M$.  Suppose that $xy$ is not antipodal. Consider the different cases below.

1. $y>_x f_{T^{*}}(x)$ and $x >_y \ell_{T^{*}}(y)$. Let $M_s \in {\cal S}(T)$ such that $y \ell_{T^*}(y) \in M_s$, which exists by definition of $T^*$. Then $M_s(x) \leq_x f_{T^*}(x) <_x y$. Thus, $xy$ blocks $M_s$, a contradiction.

2. $y<_xf_{T^{*}}(x)$ and $x<_y \ell_{T^{*}}(y)$. Let $M_s \in {\cal S}(T)$ such that $x f_{T^*}(x) \in M_s$, which exists by definition of $T^*$. Then $M_s(y)>_y \ell_{T^*}(y)>_y x$. Thus, $xy$ is an irregular edge of $M\cup M_s$. By Lemma~\ref{symmetric_diff}, we conclude that either $M$ blocks $M_s$, or $M_s$ blocks $M$. The former cannot happen by stability of $M_s$, hence the latter happens. Since $M_s\subseteq T^*$, the thesis follows. 

3. $ x >_y f_{T^{*}}(y)$ and $ y >_x \ell_{T^{*}}(x)$; or $ x <_y f_{T^{*}}(y)$ and $ y <_x \ell_{T^{*}}(x)$. Symmetric to cases 1 and 2 above, respectively, swapping the roles of $x$ and $y$.

4. $ f_{T^*}(x)>_x y >_x \ell_{T^*}(x)$ and $ f_{T^*}(y) >_y x >_y \ell_{T^*}(y)$. We first claim that $xy$ is not removed from $T$ during Phase $1$ of Irving's algorithm. Indeed, we have $ x >_y  \ell_{T^*}(y) \geq_y \ell_{T_0}(y)$ where $T_0$ is the table output by Phase 1, and similarly $y >_x \ell_{T^*}(x) \geq_x \ell_{T_0}(x)$. The edges $x'y'$ removed during Phase 1 conversely verify at least one of $y'<_{x'} \ell_{T_0}(x')$ and $ x' <_{y'} \ell_{T_0}(y')$. 
    
Hence, $xy \in T_0$. However, since $xy$ is not a stable pair, it is removed from $T_0$ during all executions of Phase 2. By Lemma~\ref{lem:singular-first}, there is a valid execution of Phase 2 where all singular rotations are eliminated first, as to obtain table $T'$. Using again Lemma~\ref{lem:singular-first}, we know that $T^*\subseteq T'$. Hence $ \ell_{T'}(x) \leq_x \ell_{T^*}(x) <_x y$ and $ \ell_{T'}(y) \leq_y \ell_{T^*}(y) <_y x$. However, pairs $x'y'$ that have been removed when going from $T_0$ to $T'$ satisfy, similarly to the above, at least one of $y'<_{x'}\ell_{T'}(x')$ and $ x'<_{y'} \ell_{T'}(y')$. Thus, $xy \in T'$. 

Now fix an execution of Phase 2 that eliminates all singular rotations first, and consider the last valid table $T''$ of this execution of Phase 2 that contains $xy$. That is, $xy \in T''$ but $xy \notin T''/ \rho$ for some rotation $\rho$ exposed in $T''$. Let $\rho$ have the form~\eqref{eq:rho} and note that $\rho$ is non-singular, because all singular rotations have been eliminated before producing table $T'$. Then there exists $i \in [r-1]_0$ such that $x=y_i$ or $y=y_i$. Suppose w.l.o.g.~that the latter happens. Hence, $x_{i-1} >_y x >_y x_{i}$, where the first relation comes from the description of the edges that are removed during a rotation elimination (see Algorithm~\ref{alg:Phase-2} and Definition~\ref{def:rotation}), and the second from the fact that $\ell_{T''}(y)=x_i$ and $x_iy$ is a stable pair by Lemma~\ref{lem:stable-pairs} while $xy$ is not by hypothesis. Let $y'=f_{T''/\rho}(x)$. 

\begin{claim}\label{cl:1} $xy'$ is a stable pair. 
\end{claim} 

\noindent \underline{Proof of Claim}. By Definition~\ref{def:valid_table}, we know that $x=\ell_{T''/\rho}(y')$. If there is a sequence of rotation eliminations leading from $T''/\rho$ to a table $T'''$ with one entry per agent such that $xy' \in T'''$, we deduce that $T'''$ is a stable matching by Theorem~\ref{thm:T_lem:eliminating-rotations-preserves-exposition}. 
Hence, suppose this is not the case. Then, there must be some iteration where $xy'$ is deleted. Since $y'=f_{T''/\rho}(x)$, this deletion happens when a rotation $\sigma=(x_0',y_0'),(x_1',y_1'),\dots,(x'_{r'-1},y'_{r'-1})$ is eliminated, where, modulo a shifting of the indices, $y'=y'_1$. Let $T'''$ the table before the elimination of $\sigma$. By definition of rotation, it must be that $x'_1=\ell_{T'''}(y')$, hence $x'_1=x$ since $x=\ell_{T''}(y')$, $T''/\rho\supseteq T'''$ and $xy' \in T'''$. Moreover, $\sigma$ is a non-singular rotation, since all singular rotations have been eliminated before obtaining $T''/\rho$, and the statement follows by Lemma~\ref{lem:stable-pairs}. $\hfill \diamond$

\begin{claim}\label{cl:2}
$y'>_x y$.
\end{claim}
\underline{Proof of Claim}. By Claim~\ref{cl:1}, $xy'$ is a stable pair, while $xy$ is not. Hence, $y'\neq y$. Recall that $f_{T''}(x)>_x y$. Hence, if some entry preceding $y$ in the preference list of $x$ in $T''$ is not eliminated when eliminating $\rho$, we have $y'=f_{T''/\rho}(x)>_x y$. Hence, assume that by eliminating $\rho$ we delete all entries in the preference list of $x$ before $y$. To delete $f_{T''}(x)$, we must have $x=x_{j}$ for some $j \in [r-1]_0$ (recalling that  $\rho$ has the form~\eqref{eq:rho}). Then $f_{T''/\rho}(x)=s_{T''}(x)$ by Lemma~\ref{lem:valid-stay-valid}. Since $xy$ is deleted when eliminating $\rho$, we have $y'=s_{T''}(x)>_x y$, as required.  $\hfill \diamond$

\begin{claim}\label{cl:3} There exists $M_s \in {\cal S}(T)$ such that $M_s(x)=y'$ and $M_s(y)>_y x$.\end{claim}

\noindent \underline{Proof of Claim}. 
By construction, $y'=f_{T''/\rho}(x)$. By construction and Lemma~\ref{lem:singular_expose}, starting from $T''/\rho$, there is always an exposed rotation whose elimination does not delete $xy'$. 
Similarly to the proof of Lemma~\ref{sm_dual}, we can iteratively rotate such exposed rotation ending up in a table $T'''$ with no rotation exposed and $xy' \in T'''$. Theorem~\ref{thm:T_lem:eliminating-rotations-preserves-exposition}, $M_s$ is stable. By construction $M_s(x)=y'$ and $M_s(y)\geq_y\ell_{T''/\rho}(y)>_y x$, as required. $\hfill \diamond$

\smallskip

Let $M_s$ be the stable matching of $T$ whose existence is guaranteed by Claim~\ref{cl:3}. Thus, $M_s(y)>_y x$.  Using also Claim~\ref{cl:2}, we know that $M_s(x)=y'>_xy$. Hence, $xy$ is an irregular edge of $M \cup M_s$, and the thesis follows similarly to case 2 above.
\end{proof}



\smallskip

\paragraph{Stitched Rotations.}
We introduce the notion of stitched rotations, an object that allows us to assemble antipodal edges and expand the set of stable matchings to a larger internally stable set.
\begin{definition}
\label{def:stitched_rotation} Let $\rho$ be as in~\eqref{eq:rho}. Suppose $\rho \in {\cal Z} (T^* \cup \rho)$ is exposed in $T^* \cup \rho$ and such that $x_iy_{i}$ satisfies the
antipodal condition with respect to $T^{*}$ for all $i \in [r-1]_0$. We call $\rho$ \emph{a }\emph{stitched 
rotation} with respect to $T^{*}$ if $\rho \in {\cal Z}_{ns}(T^{*}\cup \rho)$.
\end{definition}

\begin{figure}
{\small{\begin{center}
 $T=$ \begin{tabular}{||c c||} 
 \hline
 Person & Preference list \\ [0.5ex]
 \hline
 $x_1$ & $x_2$ $x_3$   \\ 
 \hline
 $x_2$ & $x_4$ $x_1$   \\ 
 \hline
 $x_3$ & $x_5$ $x_1$ $x_4$ \\ 
 \hline
 $x_4$ & $x_3$ $x_6$ $x_2$ \\ 
 \hline
 $x_5$ & $x_6$ $x_7$ $x_3$ \\ 
 \hline
 $x_6$ & $x_4$ $x_8$ $x_5$ \\ 
 \hline
 $x_7$ & $x_5$ $x_8$ \\ 
 \hline
 $x_8$ & $x_7$ $x_6$ \\ 
 \hline
\end{tabular}
\, $T^*=$ \begin{tabular}{||c c||} 
 \hline
 Person & Preference list \\ [0.5ex] 
 \hline
 $x_1$ & $x_2$   \\ 
 \hline
 $x_2$ & $x_1$   \\ 
 \hline
 $x_3$ & $x_4$ \\ 
 \hline
 $x_4$ & $x_3$ \\ 
 \hline
 $x_5$ & $x_6$ $x_7$ \\ 
 \hline
 $x_6$ & $x_8$ $x_5$ \\ 
 \hline
 $x_7$ & $x_5$ $x_8$ \\ 
 \hline
 $x_8$ & $x_7$ $x_6$ \\ 
 \hline
\end{tabular}
\, $T^1=$  \begin{tabular}{||c c||} 
 \hline
 Person & Preference list \\ [0.5ex] 
 \hline
 $x_1$ & $x_2$ $\boxed{x_3}$ \\ 
 \hline
 $x_2$ & $\boxed{x_4}$ $x_1$ \\ 
 \hline
 $x_3$ & $\boxed{x_1}$ $x_4$ \\ 
 \hline
 $x_4$ & $x_3$ $\boxed{x_2}$ \\ 
 \hline
 $x_5$ & $x_6$ $x_7$ \\ 
 \hline
 $x_6$ & $x_8$ $x_5$ \\ 
 \hline
 $x_7$ & $x_5$ $x_8$ \\ 
 \hline
 $x_8$ & $x_7$ $x_6$ \\ 
 \hline
\end{tabular}
\end{center}}}
\caption{The instance from Example~\ref{ex:roommate-stitched}.  Entries $(x_i,y_i) \in \rho$ are boxed in $T^1$.}\label{fig:ex:roommate-stitched}
\end{figure}

\begin{example}\label{ex:roommate-stitched}

Consider the instance of the stable roommate problem described by the table $T$ in Figure~\ref{fig:ex:roommate-stitched}, left.  Then the set of all stable matchings of $T$ consists of $\{M_1,M_2\}$ where $M_1=\{x_1x_2,x_3x_4,x_5x_6,x_7x_8\}$ and $M_2=\{x_1x_2,x_3x_4,x_5x_7,x_6x_8\}$, giving table $T^*$ from Figure~\ref{fig:ex:roommate-stitched}, center. We can find the stitched rotation $\rho=(x_1,x_3), (x_2,x_4)$ with respect to table $T^*$. Augmenting $T^*$ with $\rho$ yields table $T^1=T^* \cup \rho$ from Figure~\ref{fig:ex:roommate-stitched}, right. One can check that no other stitched rotation exists in $T^1$. Hence, $\Ss(T^1)$ is an internally closed set of matchings (see Theorem~\ref{augment_dual}).
\end{example}

Next lemma gives a necessary and sufficient condition for $\rho$ to be a stitched rotation.

\begin{lemma}
\label{lem:stitched_rotation_paird_matching} Let $\rho=(x_{0},y_{0}),(x_{1},y_{1}),\dots,(x_{r-1},y_{r-1})\subseteq T$
be a rotation of ${\cal Z}(T^* \cup \rho)$ exposed in $T^* \cup \rho$. Then $\rho$ is a stitched rotation with respect to $T^{*}$ if and only if $T^* \cap \rho=\emptyset$ and there exists $M\in\Ss(T^{*})$ such that $x_{i}y_{i+1}\in M$ for $i \in [r-1]_0$.
\end{lemma}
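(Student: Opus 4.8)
## Proof Proposal

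\textbf{Setup and strategy.} The statement to prove is an ``if and only if'' connecting a structural property of $\rho$ relative to $T^* \cup \rho$ (being a stitched rotation, i.e. non-singular in $T^* \cup \rho$) with an existence statement about a stable matching $M \in \Ss(T^*)$ whose edges realize the ``second-choice'' pairs $x_i y_{i+1}$ of $\rho$, plus the disjointness $T^* \cap \rho = \emptyset$. The plan is to handle the two directions separately, leaning heavily on the structural lemmas about rotations in the roommate case: Lemma~\ref{sm_dual} (a non-singular rotation's X/Y-sets lie on a single even cycle of a symmetric difference of two stable matchings), Lemma~\ref{lem:only-rho-and-dual} (when $\rho$ and $\overline\rho$ are both exposed, the preference lists of the agents in $\rho$ are reduced to exactly two entries each), Lemma~\ref{lem:eliminating-rotations-preserves-exposition}, Lemma~\ref{lem:singular_expose}, and Theorem~\ref{thm:T_lem:eliminating-rotations-preserves-exposition} (valid subtables with one entry per row are exactly stable matchings, obtained by rotation eliminations). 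The antipodal condition on each $x_i y_i$ immediately forces $x_i y_i \notin T^*$, so the clause $T^* \cap \rho = \emptyset$ will essentially come for free in both directions; I would state this observation first.

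\textbf{Forward direction ($\Rightarrow$).} Assume $\rho$ is a stitched rotation, i.e. $\rho \in {\cal Z}_{ns}(T^* \cup \rho)$. First note $T^* \cap \rho = \emptyset$ since each $x_i y_i$ satisfies the antipodal condition w.r.t.\ $T^*$ and hence $x_i y_i \notin T^*$. Now apply Lemma~\ref{sm_dual} to $\rho$ viewed as a non-singular rotation of the solvable instance $T^* \cup \rho$: there exist $M_1, M_2 \in \Ss(T^* \cup \rho)$ with $x_i y_i \in M_1$ and $x_{i-1} y_i \in M_2$ for all $i$; equivalently (reindexing) $x_i y_{i+1} \in M_2$ for all $i$. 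I want to argue $M_2 \in \Ss(T^*)$, not merely $\Ss(T^* \cup \rho)$. The key point: $M_2$ does not contain any edge $x_i y_i$ of $\rho$ (since $M_1$ does and $M_1, M_2$ are distinct matchings whose symmetric difference is precisely one even cycle on the X/Y-sets — each node in the cycle is matched to one of its two cycle-neighbors, and $M_2$ assigns $y_i \mapsto x_{i-1} \neq x_i$). Hence $M_2 \subseteq (T^* \cup \rho) \setminus \rho = T^*$ on these agents; more carefully, every edge of $M_2$ lies in $T^* \cup \rho$, and none lies in $\rho$, so $M_2 \subseteq T^*$. A matching contained in $T^*$ that is stable in the larger table $T^* \cup \rho$ is a fortiori stable in $T^*$ (fewer potential blocking pairs). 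So $M := M_2$ works.

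\textbf{Reverse direction ($\Leftarrow$), the main obstacle.} Assume $T^* \cap \rho = \emptyset$ and there is $M \in \Ss(T^*)$ with $x_i y_{i+1} \in M$ for all $i$. I must show $\rho$ is non-singular in $T^* \cup \rho$, i.e.\ that its dual $\overline\rho$ (as in~\eqref{eq:dual-rotation}) is also exposable in some valid subtable of $T^* \cup \rho$. The plan: let $T'$ be the ``matching table'' of $M$ (each matched agent's list is just $M$'s partner). By Theorem~\ref{thm:T_lem:eliminating-rotations-preserves-exposition}, $T' = T^* / {\cal Z}$ for some sequence ${\cal Z} = \{\rho_1, \dots, \rho_k\}$ of exposed-in-order rotations of $T^*$. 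I claim the same sequence $\rho_1, \dots, \rho_k$ can be eliminated in order from $T^* \cup \rho$ without ever deleting any pair $x_i y_i \in \rho$. To see this, fix a step $j$ and an index $i$; since $\rho$ is exposed in $T^* \cup \rho$ we have $y_{i+1} = f_{T^*}(x_i)$, equivalently $x_i = \ell_{T^*}(y_{i+1})$. The pair $x_i y_i$ can be deleted during elimination of $\rho_j$ only if $x_i$ is in the X-set of $\rho_j$ (then $x_i$'s list loses its tail) or $y_i$ is in the Y-set of $\rho_j$ (then $y_i$'s list loses entries it prefers less than the new top). But $x_i$ cannot be in the X-set of any $\rho_j \subseteq T^*$ because $x_i y_i \notin T^*$ would need... — here I need the finer point that $x_i$'s top choice in $T^*$ is $y_{i+1}$, which is where the X-set elimination would move it, so an elimination at $x_i$ would move it \emph{below} $y_{i+1}$, contradicting $x_i y_{i+1} \in M = T'$ being the final single entry; similarly $y_i$ cannot be in the Y-set of $\rho_j$ because $x_i = \ell_{T^*}(y_{i+1})$ means $x_i$ sits immediately above $x_{i+1}$ in $y_{i+1}$'s list and $x_i y_{i+1}$ survives to $T'$, so $y_{i+1}$ is never the Y-vertex of a rotation that would delete $x_i y_i$. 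After performing all eliminations we reach $T' \cup \rho$, which consists of: the fixed pairs of $M$ outside $\rho$, plus the pairs $x_i y_{i+1}$ (partners in $M$), plus the pairs $x_i y_i$ (edges of $\rho$) — i.e.\ for each $i$, $x_i$'s list is $y_i, y_{i+1}$ and $y_i$'s list is $x_{i-1}, x_i$. This is exactly the configuration of Lemma~\ref{lem:only-rho-and-dual}: both $\rho$ and $\overline\rho$ are exposed in $T' \cup \rho$, a valid subtable of $T^* \cup \rho$, so $\overline\rho \in {\cal Z}(T^* \cup \rho)$ and $\rho$ is non-singular, i.e.\ stitched.

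\textbf{Where the difficulty lies.} The delicate step is verifying that the rotation-elimination sequence ${\cal Z}$ for $M$ in $T^*$ lifts verbatim to $T^* \cup \rho$ and never kills an edge of $\rho$; this requires carefully tracking, for each agent $x_i$ and $y_i$ in $\rho$, why it is never the ``active'' vertex of an eliminated rotation in a way that would remove $x_i y_i$, using $y_{i+1} = f_{T^*}(x_i)$ and $x_i = \ell_{T^*}(y_{i+1})$ together with $x_i y_{i+1} \in M$. A subsidiary subtlety (used implicitly in the excerpt's commented-out ``Justification'') is ruling out an intermediate agent $z$ with $x_{i} >_{y_{i+1}} z >_{y_{i+1}} x_{i+1}$ and $y_{i+1} z \in T^*$; one argues such a $z$ cannot exist because otherwise $y_{i+1} x_i$ would not be consecutive appropriately, or $y_{i+1}z$ would fail to be a stable pair while still blocking, contradicting $T^* = E_S(T)$. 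I would isolate these as one or two short claims before assembling the two directions.
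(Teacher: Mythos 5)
Your proposal is correct and follows essentially the same route as the paper's proof: the forward direction applies Lemma~\ref{sm_dual} to $\rho$ as a non-singular rotation of $T^*\cup\rho$ and observes that the resulting matching $M_2\subseteq T^*$ inherits stability from the larger table, while the reverse direction lifts the rotation-elimination sequence producing $M$ from $T^*$ to $T^*\cup\rho$ by showing the agents of $\rho$ never enter the $X$- or $Y$-sets of the eliminated rotations (using $y_{i+1}=f_{T^*}(x_i)$, $x_i=\ell_{T^*}(y_{i+1})$, and $x_iy_{i+1}\in M$), landing in $T'\cup\rho$ where both $\rho$ and $\overline\rho$ are exposed. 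The only cosmetic differences are your explicit invocation of Lemma~\ref{lem:only-rho-and-dual} at the end and the final ``subsidiary subtlety'' about intermediate agents $z$, which is not actually needed.
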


\begin{proof}
First observe that $T^*,T^*\cup\rho$ are solvable instances, since $E_S(T)=T^*\subseteq T^*\cup\rho \subseteq T$. Suppose first that $\rho$ as in~\eqref{eq:rho} is a stitched rotation. Lemma~\ref{sm_dual} guarantees the existence of $M \in {\cal S}(T^* \cup \{\rho\})$ such that $x_iy_{i+1} \in M$ for $i \in [r-1]_0$. We need to show that $M \in {\cal S}(T^*)$. By construction, $M \in {\cal M}(T^*)$. Moreover, since $M$ is not blocked by any edge of $T^* \cup \rho$, it is not blocked by any edge of $T^*$. Hence $M \in {\cal S}(T^*)$.

Conversely, assume that $ T^* \cap \rho=\emptyset$ and there exists $M\in\Ss(T^{*})$ such that $x_{i}y_{i+1}\in M$ for $i \in [r-1]_0$.
Since $\rho$ is exposed in $T^* \cup \rho$, we have that for $i \in [r-1]_0$, $y_{i}=f_{T^*\cup \rho}(x_i)$ and $x_{i}=\ell_{T^*\cup \rho}(y_i)$. Since $x_iy_i \notin T^*$, we deduce that $x_iy_i$ is antipodal.

Let $T'$ be table where each agent's list has only its partner in $M$. By Theorem~\ref{thm:T_lem:eliminating-rotations-preserves-exposition}, we know that $T'$ can be obtained from $T^*$ by iteratively eliminating some exposed rotations $\rho_1,\rho_2,\dots,\rho_k\in \mathcal{Z}(T^*)$ in this order. We claim that $\rho_1,\rho_2,\dots,\rho_k\in \mathcal{Z}(T^*\cup \rho)$, and can be iteratively eliminated, in the same order, as exposed rotations starting from $T^*\cup \rho$, without removing any of the edges $x_iy_i$. Once this is is proved, observe that the resulting table of the latter sequence of rotation elimination is $T'\cup\rho$. One easily observes that $\rho$ and its dual $\overline \rho$ are then rotations exposed in $T'\cup \rho$ since by definition of rotation, for $i \in [r-1]_0$ we have $y_i>_{x_i} y_{i+1}$ and $x_i<_{y_i}x_{i-1}$. Hence, $\rho \in {\cal Z}_{ns}(T^* \cup \rho)$ is a stitched rotation, concluding the proof. 

Let $i\in [r-1]_0$. Since $y_{i}=f_{T^*\cup \rho}(x_i)$ and $y_{i+1}=s_{T^*\cup \rho}(x_i)=f_{T^*}(x_i)$, the first rotation $\rho_j \in \{\rho_1,\dots,\rho_k\}$ that counts $x_i$ in its $X$-set must have $(x_i,y_{i+1}) \in \rho_j$. When eliminated from $T^*$, 
by definition it eliminates edge $x_{i}y_{i+1}$. But $x_i y_{i+1}\in T'$, a contradiction. Hence, $x_i$ is not in the $X$-set of $\rho_j$.

Since $T^*$ is a valid table by Lemma~\ref{lem:T-star-is-valid}, we know that $\ell_{T^*}(y_{i+1})=x_i$. Hence, if $\rho_j \in \{\rho_1,\dots,\rho_k\}$ is the first rotation that counts $y_{i+1}$ in its $Y$-set, the elimination of $\rho_j$ eliminates $x_i y_{i+1} \in M'$, again a contradiction. 
Hence for $j \in [k]$ the $X$-set (resp.~$Y$-set) of $\rho_j$ is disjoint from the $X$-set (resp.~$Y$-set) of $\rho$. Hence, we can eliminate $\rho_1,\dots,\rho_k$ in this order from $T^* \cup \rho$. To observe that, for $i \in [r-1]_0$, $x_iy_i$ is not removed during this sequence of elimination, observe that $y_i=f_{T^*\cup\rho}(x_i)$ and $y_i$ is not in any $Y$-set of $\rho_1,\dots,\rho_k$. 
\end{proof}





The next theorem shows that the stable subtable $T^*$ of a solvable instance is internally closed if and only if there is no stitched rotation w.r.t.~$T^*$.

\begin{theorem}\label{augment_dual} 1. Let $M\in\M(T)\backslash \Ss(T^*)$, and assume that $\{M\}\cup \Ss(T^*)$ is internally stable. Then there exists a stitched rotation $\rho$ w.r.t.\ $T^*$. 

2. Conversely, if $\rho$ is a stitched rotation w.r.t.\ $T^*=E_S(T)$, then there exists a matching $M \in {\cal M}(T)\setminus {\cal S}(T^*)$ with $\rho \subseteq M$ so that $\{M\} \cup {\cal S}(T^*)$ is internally stable. 
\end{theorem}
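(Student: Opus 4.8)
The plan is to prove the two implications separately; statement~2 is short once Lemma~\ref{sm_dual} is available, while statement~1 is the harder direction and requires passing to the auxiliary instance $\hat T:=T^{*}\cup M$ and analysing its rotations.

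\textbf{Statement 2.} Let $\rho=(x_{0},y_{0}),\dots,(x_{r-1},y_{r-1})$ be a stitched rotation with respect to $T^{*}$. First I would record that every $M'\in{\cal S}(T^{*})$ is stable in $T^{*}\cup\rho$: it is unblocked by edges of $T^{*}$, and it is unblocked by an edge $x_{i}y_{i}$ of $\rho$ because $x_{i}y_{i}$ is antipodal, both $x_{i}$ and $y_{i}$ are agents of $T^{*}$ (the antipodal condition refers to $f_{T^{*}},\ell_{T^{*}}$ of these agents) hence are matched in $M'$ by Lemma~\ref{symmetric_diff}, and therefore one of $M'(y_{i})\geq_{y_{i}}\ell_{T^{*}}(y_{i})>_{y_{i}}x_{i}$ or $M'(x_{i})\geq_{x_{i}}\ell_{T^{*}}(x_{i})>_{x_{i}}y_{i}$ holds. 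In particular $T^{*}\cup\rho$ is solvable (it has a stable matching), so Lemma~\ref{sm_dual} applies to the non-singular rotation $\rho\in{\cal Z}_{ns}(T^{*}\cup\rho)$ and yields $M,M_{2}\in{\cal S}(T^{*}\cup\rho)$ with $x_{i}y_{i}\in M$ and $x_{i}y_{i+1}\in M_{2}$ for all $i$; thus $\rho\subseteq M$. Since each $x_{i}y_{i}$ is antipodal, $x_{i}y_{i}\in T\setminus T^{*}$, so $M\subseteq T^{*}\cup\rho\subseteq T$, and $M$ contains the unstable edge $x_{0}y_{0}\notin E_{S}(T)=T^{*}$, whence $M\in{\cal M}(T)\setminus{\cal S}(T)$. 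Finally $M$ and all matchings of ${\cal S}(T^{*})$ are stable matchings of the single instance $T^{*}\cup\rho$, and no two stable matchings of a fixed instance block each other in either direction; so $\{M\}\cup{\cal S}(T^{*})$ is internally stable.

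\textbf{Statement 1, setup.} Let $M\in{\cal M}(T)\setminus{\cal S}(T)$ be unblocked by every edge of $T^{*}$ and put $\hat T:=T^{*}\cup M$. Using $E_{S}(T)=T^{*}$ and the standard identity ${\cal S}(T^{*})={\cal S}(T)$, if $M\subseteq T^{*}$ then $M$ would be a matching of $T^{*}$ unblocked by $T^{*}$-edges, hence stable in $T^{*}$ and so in $T$, contradicting $M\notin{\cal S}(T)$; thus $M\setminus T^{*}\neq\emptyset$, and by the contrapositive of Lemma~\ref{lem:wlog_antipodal} every edge of $M\setminus T^{*}$ is antipodal. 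One checks that $M\in{\cal S}(\hat T)$ (unblocked by $T^{*}$-edges by hypothesis, by its own edges trivially) and that ${\cal S}(T^{*})\subseteq{\cal S}(\hat T)$ (antipodal edges do not block matchings contained in $T^{*}$, exactly as in statement~2). Hence ${\cal S}(\hat T)\supseteq{\cal S}(T^{*})\cup\{M\}$ with $M\notin{\cal S}(T^{*})$, and since every edge of $\hat T$ lies in one of these stable matchings we get $E_{S}(\hat T)=\hat T$. Now fix any $xy\in M\setminus T^{*}$: it is a stable pair of $\hat T$, and it is not a fixed pair (it is absent from some $M'\in{\cal S}(T)\subseteq{\cal S}(\hat T)$), so by Lemma~\ref{lem:stable-pairs} it lies in some non-singular rotation $\rho$ of $\hat T$.

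\textbf{Statement 1, crux and obstacle.} It remains to show that $\rho$, after the appropriate cyclic and dual orientation, is a stitched rotation with respect to $T^{*}$; by Lemma~\ref{lem:stitched_rotation_paird_matching} this amounts to exhibiting $\rho$ as an exposed rotation of $T^{*}\cup\rho$ with $T^{*}\cap\rho=\emptyset$ whose lower edges $x_{i}y_{i+1}$ all lie in one matching of ${\cal S}(T^{*})$. Applying Lemma~\ref{sm_dual} to $\rho$ in $\hat T$ gives $M_{1},M_{2}\in{\cal S}(\hat T)$ with $x_{i}y_{i}\in M_{1}$, $x_{i}y_{i+1}\in M_{2}$, and $M_{1}\triangle M_{2}$ a single even cycle $C$ on the agents of $\rho$; I may assume $xy=x_{0}y_{0}\in M_{1}$. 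The decisive structural fact is that $\hat T\setminus T^{*}=M\setminus T^{*}$, so for each $i$ at most one of $x_{i}y_{i},x_{i}y_{i+1}$ lies outside $T^{*}$ (they would otherwise both be $M$-edges incident to $x_{i}$); starting from $x_{0}y_{0}\notin T^{*}$ and using the antipodal shape of the edges of $M\setminus T^{*}$, I would propagate this alternation around $C$ to conclude that every $x_{i}y_{i}$ is antipodal (so $T^{*}\cap\rho=\emptyset$) while every $x_{i}y_{i+1}$ lies in $T^{*}$, in fact that $M_{2}\in{\cal S}(T)$. Then, since $y_{i+1}$ is on $x_{i}$'s list in $T^{*}$ (via $M_{2}$) and $y_{i}>_{x_{i}}f_{T^{*}}(x_{i})$ by antipodality, $y_{i}$ and $y_{i+1}$ are the first two entries of $x_{i}$ in $T^{*}\cup\rho$, and dually $x_{i}=\ell_{T^{*}\cup\rho}(y_{i})$, so $\rho$ is exposed in $T^{*}\cup\rho$ and Lemma~\ref{lem:stitched_rotation_paird_matching} finishes the argument. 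I expect the cycle-propagation step to be the main obstacle: it is where one genuinely uses that $\hat T$ is a stable subtable plus a single extra matching together with the antipodal condition, rather than any generic property of the rotation poset, whereas statement~2 needs only routine verification.
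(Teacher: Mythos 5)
Your statement~2 is correct and is essentially the paper's argument: the paper likewise notes that $\rho$ is non-singular in $T^{*}\cup\rho$ and eliminates rotations until only $\rho$ and $\overline{\rho}$ remain, which is precisely the mechanism behind the Lemma~\ref{sm_dual} invocation you use; your extra verification that antipodal edges cannot block matchings of $\Ss(T^{*})$ is also how the paper justifies $\Ss(T^{*})\subseteq\Ss(T^{*}\cup\rho)$.

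The gap is in statement~1, and it sits exactly where you flagged it. After obtaining a non-singular rotation $\rho$ of $\hat T=T^{*}\cup M$ through $x_{0}y_{0}$ abstractly via Lemma~\ref{lem:stable-pairs}, you still owe three facts. (i)~The alternation itself: your observation that at most one of $x_{i}y_{i},x_{i}y_{i+1}$ lies outside $T^{*}$, seeded by $x_{0}y_{0}\notin T^{*}$, does not force the pattern around the cycle --- it is perfectly consistent with $x_{0}y_{0}$ being the \emph{only} non-$T^{*}$ edge of $\rho$. (ii)~Exposure in $T^{*}\cup\rho$ needs $y_{i+1}=f_{T^{*}}(x_{i})$ exactly; your argument via $M_{2}$ only yields $x_{i}y_{i+1}\in T^{*}$, and the second entry of $x_{i}$ in the valid subtable of $\hat T$ where $\rho$ is exposed is merely the first \emph{surviving} $T^{*}$-entry, so you must rule out that earlier eliminations deleted $f_{T^{*}}(x_{i})$. (iii)~Lemma~\ref{lem:stitched_rotation_paird_matching} needs a matching of $\Ss(T^{*})$ through all the $x_{i}y_{i+1}$, but the $M_{2}$ from Lemma~\ref{sm_dual} agrees with $M_{1}$ off the cycle and so may inherit edges of $M\setminus T^{*}$ on agents outside $\rho$, leaving $M_{2}\notin\Ss(T^{*})$. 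The paper circumvents all three points by constructing the rotation by hand: starting from $x_{0}y_{0}$ with $y_{0}>_{x_{0}}f_{T^{*}}(x_{0})$ it sets $y_{i+1}:=f_{T^{*}}(x_{i})$ and $x_{i+1}:=M(y_{i+1})$, uses Lemma~\ref{symmetric_diff} (no irregular edge between $M$ and a stable matching containing $x_{i}y_{i+1}$) together with validity of $T^{*}$ (so $x_{i}=\ell_{T^{*}}(y_{i+1})$) to get $x_{i+1}<_{y_{i+1}}\ell_{T^{*}}(y_{i+1})$, hence $x_{i+1}y_{i+1}\in M\setminus T^{*}$ and antipodal, iterates, shows the first repeated vertex is $y_{0}$, and finally proves directly that $\widetilde M=M\setminus\{x_{i}y_{i}\}_{i}\cup\{x_{i}y_{i+1}\}_{i}$ lies in $\Ss(T^{*})$ before applying Lemma~\ref{lem:stitched_rotation_paird_matching}. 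To repair your route you would have to reproduce essentially this chain argument, at which point the detour through Lemma~\ref{lem:stable-pairs} and Lemma~\ref{sm_dual} buys nothing.
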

\begin{proof}
1. 

\begin{claim}\label{cl:again-stable-table}
$M\cup T^*$ is a stable table. \end{claim} 

\noindent \underline{Proof of Claim}. By definition, $M$ is not blocked by any edge in $M\cup T^*$, so $M\in \Ss(M\cup T^*)$. Since $T^*=E_S(T^*)$, we have $M\cup\{M':M'\in\Ss(T^*)\}=M\cup T^*$. By Lemma \ref{lem:internally-stable-are-stable}, the internal stability of $\{M\}\cup \Ss(T^*)$ implies $\{M\}\cup \Ss(T^*)\subseteq \Ss(M\cup T^*)$, which implies that $M\cup T^*$ is a stable table. 
$\hfill \diamond$

\smallskip 

We next show how to construct the stitched rotation $\rho$ whose existence is claimed in the thesis. Since $M \notin {\cal S}(T^*)$ and $E_S(T^*)=T^*$, we must have $M\setminus T^* \neq \emptyset$. Take therefore any $x_0y_0\in M\backslash T^*$. By Lemma~\ref{lem:wlog_antipodal}, $x_0y_0$ satisfies the antipodal condition. We assume without loss of generality that $y_0>_{x_0}f_{T^*}(x_0)$. We claim that there exists $M^* \in \Ss(T^*)$ such that $M^*(x_0)=f_{T^*}(x_0)$ and $M^*$ and $M$ do not block each other. Any matching contained in $T^*$ does not block $M$ by hypothesis. Moreover, there is a matching $M^* \in {\cal S}(T)\subseteq \Ss(T^*)$ such that $M^*(x_0)=f_{T^*}(x_0)$ by definition of $T^*$. Since $M \subseteq T$, we deduce that $M$ does not block $M^*$.

Define $y_1=M^*(x_0)$, $x_1=M(y_1)$. By Lemma~\ref{symmetric_diff}, $x_0>_{y_1} x_1$. As $y_1=f_{T^*}(x_0)$, we have that $x_0=\ell_{T^*}(y_1)$ since $T^*$ is a valid table by Lemma~\ref{lem:T-star-is-valid}. Hence, $\ell_{T^*}(y_1)>_{y_1} x_1$, i.e., $x_1y_1\in M\setminus T^*$. Since $M \cup T^*$ is a stable table by Claim~\ref{cl:again-stable-table}, there exists matching $M^{**}$ such that $M^{**}(x_1)=f_{T^*}(x_1)$ and $M^{**}$ and $M$ do not block each other. We deduce $y_1>_{x_1}f_{T^*}(x_1)$. We can therefore iterate this argument, obtaining edges $x_3y_3, x_4y_4, ... \in M\setminus T^*$, with the properties that, for $i \in \N$, $x_i=\ell_{T^*}(y_{i+1})$ (or, equivalently, $y_{i+1}=f_{T^*}(x_{i})$), $y_i >_{x_i}f_{T^*}(x_{i})$ and $x_i=M(y_i) <_{y_i}\ell_{T^*}(y_i)$. Hence, in the sequence $y_0,x_0,y_1,x_1,y_3,...$ each node prefers the one that precedes it. 

Let $z$ be the first node that appears twice in this list. We claim that $z=y_0$. Assume first $z=x_i=y_j$ for some $i,j$. If $i \geq j$, then there are two edges of $M$ incident to $z$, a contradiction. If instead $i<j$ then the two edges $x_i y_i, x_j y_j$ are both in $M$ and incident to $z$. However, by construction $y_i >_z f_{T^*}(z) \geq_z \ell_{T^*}(z) >_z x_j$, showing that those edges are distinct hence contradicting that $M$ is a matching. If $z=x_i=x_j$, then again we contradict that $M$ is a matching. Hence, $z=y_i=y_j$ with $i<j$. If $i \neq 0$, then $f_{T^*}(x_{i-1})=z=f_{T^*}(x_{j-1})$, contradicting that $T^*$ is a valid table. Hence, $z= y_0=y_r$. We deduce therefore the following:

\begin{claim}\label{cl:rho-X-Y-disjoint}
$\rho=(x_0,y_0),(x_1,y_1),\dots,(x_{r-1},y_{r-1})$ belongs to $\mathcal{Z}(T^*\cup \rho)$ and is exposed in $T^*\cup\rho$. Moreover, its $X$- and $Y$-sets are disjoint, and $T^*\cap \rho = \emptyset$.
\end{claim}


To show that $\rho$ is a stitched rotation w.r.t.~$T^*$, we derive more properties of the set $M\setminus T$. Suppose there exists $xy \in M\setminus (T^*\cup \rho)$. We can repeat the argument used above to construct $\rho$ and obtain a rotation $\rho'$ whose first pair is, without loss of generality, $(x,y)$. 

\begin{claim}\label{cl:rho-rho'-disjoint}
$\rho$ and $\rho'$ are disjoint.
\end{claim}

\noindent \underline{Proof of Claim}. Suppose by contradiction that they are not, and let $z$ be the first agent (according to the order of $\rho'$ starting from $(x,y)$, where in each pair the $X$-agent precedes the $Y$-agent) that is in both $\rho$ and $\rho'$. Note that, by construction, $zM(z) \in \rho \cap \rho'$.  Hence $z$ is an $X$-agent of $\rho'$. Suppose first it is an $X$-agent of $\rho$ as well. Note that $z\neq x$, by definition of $x$. Let therefore $(\bar x, \bar y)$ be the pair that precedes $(z,M(z))$ in $\rho'$. By construction, $\bar x=\ell_{T^*}(M(z)) \in \rho$, contradicting the choice of $z$. Last, assume that $z$ is a $Y$-agent of $\rho$. Then $(z,M(z))\in \rho'$ and $(M(z),z) \in \rho$. However, from the properties deduced while constructing $\rho$, we have $M(z)<_{z} \ell_{T^*}(z)\leq_z f_{T^*}(z)<_z M(z)$, a contradiction. $\hfill \diamond$

\smallskip 

The construction of $\rho,\rho'$ and Claim~\ref{cl:rho-rho'-disjoint} immediately imply the following.

\begin{claim}\label{cl:ex-disjoint-rho}
There exists disjoint rotations $\rho_1,\dots,\rho_k$ such that: for $j \in [k]$, $\rho_j$ is exposed in $T \cup \rho_j$; $M\setminus T = \cup_{j=1}^k \rho_j$.
\end{claim}


To conclude the proof that $\rho$ is a stitched rotation, first let 
$$\widetilde{M}=M\setminus (\cup_{j=1}^k \{xy| (x,y) \in \rho_j\}) \cup_{j=1}^k \{xy'|(x,y) \in \rho_j \hbox{ and } y'=f_{T^*}(x)\},$$
which is a matching by Claim~\ref{cl:rho-X-Y-disjoint} and Claim~\ref{cl:ex-disjoint-rho}. Recall that, for $j \in [k]$ and $(x_i,y+i), (x_{i+1},y_{i+1})$ consecutive pairs of $\rho_j$, we have $y_{i+1}=f_{T^*}(x_i)$, $x_i=\ell_{T^*}(y_{i+1})$, $y_i >_{x_i} y_{i+1}$, and $x_{i+1} >_{y_{i+1}} x$. In particular, for every agent $v$, we have $|r_{M \cup T^*}(v,M(v))-r_{M \cup T^*}(v,\widetilde M(v))| \leq 1$. We claim that $\widetilde{M}\in \Ss(T^*)$. By Claim~\ref{cl:ex-disjoint-rho}, $\widetilde M \subseteq T^*$. Suppose by contradiction that $xy\in T^*$ blocks $\widetilde{M}$, i.e., $ y >_x \widetilde{M}(x)$ and $ x >_y \widetilde{M}(y)$. Because $y_i >_{x_i} y_{i+1}$ and $x_{i+1} >_{y_i} x_i$ for all $j \in [k]$ and $(x_i,y_i), (x_{i+1},y_{i+1})$ consecutive pairs of $\rho_j$, $M$ and $\widetilde{M}$ do not block each other. In particular, $xy\not\in M$. Therefore, $x >_y\widetilde{M}(y)$, $y \neq M(y)$, and  $|r_{M \cup T^*}(y,M(y))-r_{M \cup T^*}(y,\widetilde M(y))| \leq 1$ imply $x >_y M(y)$. Similarly, $ y >_x M(x)$. Hence, $xy$ blocks $M$, contradicting the hypothesis. 

We can then apply Lemma \ref{lem:stitched_rotation_paird_matching} to deduce that $\rho$ is a stitched rotation w.r.t.\ $T^*$, concluding the proof of 1.2. By Definition \ref{def:stitched_rotation}, $\rho$ is a non-singular rotation exposed in $T^*\cup \rho$. We can construct $M$ that is not blocked by any edge from $T^*$ through the following sequence of rotation eliminations, starting from $T^*\cup \rho$. By Lemma \ref{lem:singular_expose}, $\rho$ is not the only exposed rotation. Therefore we can keep on eliminating rotations other than $\rho$, until $\rho$ and $\overline{\rho}$ are the only two remaining rotations exposed. Eliminating $\overline{\rho}$ will leave $x_i$ and $y_i$ with a single entry in their preference list, see Lemma \ref{lem:only-rho-and-dual}. Hence, by Theorem \ref{thm:T_lem:eliminating-rotations-preserves-exposition}, there exists $M\in \Ss(T^*\cup \rho)$ with $\rho\subset M$. Since $T^*=E_S(T)$, we deduce that all edges of $T^*\cup \rho$ are stable, in $T^*\cup\rho$. In particular, $M$ is not blocked by any edge in $T^*$. Moreover, since all edges of $\rho$ are antipodal by definition, $\Ss(T^*)\subseteq \Ss(T^*\cup \rho)$, concluding the proof of 2. \end{proof}

\subsection{Deciding if a set of matchings is internally closed is co-NP-hard}\label{sec:NP_hard_deciding_internally_closed}

Using the structural properties we derived above, we now show that deciding if a set of matchings is internally closed is co-NP-hard through a reduction from 3-SAT. Recall that we called this problem \texttt{CIC}. Specifically, given any instance of 3-SAT, we construct a solvable instance $T$, and consider the problem of deciding if $\Ss(\widetilde{T})$ is internally closed, where $\widetilde{T}=E_S(T)$.

Let $\phi$ be any instance of 3-SAT with $n$ clauses, defined as: 
$$ \phi \quad \coloneqq \quad \bigwedge_{r=0}^{n-1}(h_{r,1}\vee h_{r,2}\vee h_{r3})
 \quad  = \quad \bigwedge_{r=0}^{n-1}C_{r},\quad\text{where }C_{r}=h_{r,1}\vee h_{r,2}\vee h_{r,3}
$$
We let each literal be included in the set $h_{r,i}\in\{v_{1},\overline{v_{1}},v_{2},\overline{v_{2}}\dots,v_k,\overline{v_k}\}$,
with a total of $k=O(n)$ variables. We say literal $h$ is \emph{positive} if $h=v_a$ and \emph{negative} if $h=\overline{v_a}$ for some variable $v_a$.

\paragraph{Construction of $T$ and $\widetilde T$.}
We construct an instance $T$ and subtable $\widetilde{T}$ in two steps.

\paragraph{Step 1: Construction of $\widetilde{T}$.}

\subparagraph{Step 1a: Construction of table $T_j$ for $j \in [k]$.}
For $j \in [k]$, let $cnt(v_{j})$ be the total number
of times variable $v_{j}$ (in the form of $v_{j}$ or $\overline{v_{j}}$)
appears in $\phi$, and let $n_{j}=2\cdot cnt(v_{j})-1$. For $j \in [k]$,
construct a roommate instance $T_{j}$ over variables $\{z_{j,i}\}_{i=1}^{n_j}\cup \{w_{j,i}\}_{i=1}^{n_j}$ as in Figure~\ref{fig:coNPH:instance}, left. Define:
\begin{align*}
\rho_{j} & =(z_{j,1},w_{j,1}),(z_{j,2},w_{j,2}),\dots,(z_{j,n_{j}},w_{j,n_{j}}),\\
\overline{\rho_{j}} & =(w_{j,2},z_{j,1}),(w_{j,3},z_{j,2}),\dots,(w_{j,1},z_{j,n_{j}}).
\end{align*}

\begin{figure}
\begin{center}
\begin{tabular}{||c|c||}
\hline 
Agent  & Preference list \tabularnewline
\hline 
$z_{j,1}$  & $w_{j,1}$ $w_{j,2}$\tabularnewline
\hline 
$z_{j,2}$  & $w_{j,2}$ $w_{j,3}$\tabularnewline
\hline 
$\vdots$ & $\vdots$\tabularnewline
\hline 
$z_{j,n_{j}}$  & $w_{j,n_{j}}$ $w_{j,1}$\tabularnewline
\hline 
$w_{j,1}$  & $z_{j,n_{j}}$ $z_{j,1}$\tabularnewline
\hline 
$\vdots$ & $\vdots$\tabularnewline
\hline 
$w_{j,n_{j}}$  & $z_{j,n_{j}-1}$ $z_{j,n_{j}}$\tabularnewline
\hline 
\end{tabular}
\hspace{.5cm}
\begin{tabular}{||c|c||}
\hline 
Agent  & Preference list \tabularnewline
\hline 
$z'$ & $w'$\tabularnewline
\hline 
$w'$ & $z'$\tabularnewline
\hline 
\end{tabular}
\par\end{center}
\caption{On the left: table $T_j$ for $j \in [k]$. On the right: table $T_0$.}\label{fig:coNPH:instance}
\end{figure}

Next lemma easily follows from the definition of exposed rotations. 

\begin{lemma}\label{rotations-of-Tj}
Let $j \in [k]$. Then, $\rho_j$ and $\overline{\rho_j}$ are the unique rotations exposed in $T_j$, and they are dual to each other. Moreover, $T_j = \rho_j \cup \overline{\rho_j}$.
\end{lemma}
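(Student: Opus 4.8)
The plan is to read everything off the definition of an exposed rotation (Definition~\ref{def:rotation}) using the fact that in $T_j$ every preference list has length exactly $2$. First I would record the elementary data: for all $i$ (indices mod $n_j$), $f_{T_j}(z_{j,i})=w_{j,i}$, $s_{T_j}(z_{j,i})=\ell_{T_j}(z_{j,i})=w_{j,i+1}$, $f_{T_j}(w_{j,i})=z_{j,i-1}$, and $s_{T_j}(w_{j,i})=\ell_{T_j}(w_{j,i})=z_{j,i}$. In particular $w=f_{T_j}(z)$ if and only if $z=\ell_{T_j}(w)$, so $T_j$ satisfies condition~\ref{valid_cond:first-is-last} of Definition~\ref{def:valid_table}; together with the fact that no list is empty (and that $T_j$ is solvable, e.g.\ because $\{z_{j,i}w_{j,i}\}_i$ is a stable matching) this makes $T_j$ a valid table, which licenses the identity $x_i=\ell_{T_j}(y_i)$ referenced in Definition~\ref{def:rotation}.

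Then I would check that the two claimed rotations are exposed. For $\rho_j=(z_{j,1},w_{j,1}),\dots,(z_{j,n_j},w_{j,n_j})$ this is immediate from $w_{j,i}=f_{T_j}(z_{j,i})$ and $w_{j,i+1}=s_{T_j}(z_{j,i})$; for $\overline{\rho_j}$, whose $i$-th entry is $(w_{j,i+1},z_{j,i})$, it follows from $z_{j,i}=f_{T_j}(w_{j,i+1})$ and $z_{j,i+1}=s_{T_j}(w_{j,i+1})$. Next I would observe that, in the notation of Definition~\ref{rotation_singularity} with $x_i=z_{j,i}$ and $y_i=w_{j,i}$, the dual of $\rho_j$ is exactly $(y_2,x_1),(y_3,x_2),\dots,(y_1,x_{n_j})=\overline{\rho_j}$; since $\overline{\rho_j}$ is itself an exposed rotation of $T_j$, this shows $\rho_j$ is non-singular with dual $\overline{\rho_j}$ (and $\overline{\overline{\rho_j}}=\rho_j$).

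For uniqueness I would argue that an exposed rotation of $T_j$ is completely determined by its first entry. Given any entry $(x_i,y_i)$ of an exposed rotation, $y_i=f_{T_j}(x_i)$ is forced, then $y_{i+1}=s_{T_j}(x_i)$ is forced, and then by validity $x_{i+1}=\ell_{T_j}(y_{i+1})$ is forced; composing, the induced successor map on first coordinates is $z_{j,a}\mapsto\ell_{T_j}(w_{j,a+1})=z_{j,a+1}$ on the $z$-agents and $w_{j,a}\mapsto\ell_{T_j}(z_{j,a})=w_{j,a+1}$ on the $w$-agents --- in both cases a single cycle of length $n_j$. Hence every exposed rotation has exactly $n_j$ entries and, up to the cyclic shift under which rotations are identified, depends only on whether $x_0$ is a $z$-agent or a $w$-agent, giving precisely $\rho_j$ and $\overline{\rho_j}$. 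Finally, $T_j=\rho_j\cup\overline{\rho_j}$ is bookkeeping: read as subtables of $T_j$, $\rho_j$ contributes the edges $\{z_{j,i}w_{j,i}\}_i$ and $\overline{\rho_j}$ the edges $\{z_{j,i}w_{j,i+1}\}_i$, and their union is exactly the edge set of $T_j$, with the orderings inherited from $T_j$. The only place needing real care is the uniqueness step --- making sure the successor map above is genuinely forced by Definition~\ref{def:rotation} (this is precisely where validity of $T_j$ and the length-$2$ lists get used) and that a rotation cannot fail to close up or repeat an agent, which is exactly the assertion that this successor map is a permutation consisting of a single $n_j$-cycle; everything else is direct substitution into the definitions.
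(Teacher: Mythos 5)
Your proof is correct, and it is the same (definition-chasing) argument the paper has in mind — the paper simply asserts that the lemma "easily follows from the definition of exposed rotations" without writing out the details. Your verification of validity of $T_j$, the exposedness of $\rho_j$ and $\overline{\rho_j}$, the forced successor map giving uniqueness, and the edge-set bookkeeping for $T_j=\rho_j\cup\overline{\rho_j}$ all check out.
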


\subparagraph{Step 1b: Construction of $T_{0}$.} We let $T_0$ be the roommate instance over agents $z',w'$ defined as in Figure~\ref{fig:coNPH:instance}, right. 

\subparagraph{Step 1c: Juxtaposition.}
Let $\widetilde{T}$ be the \emph{juxtaposition} of $T_{0}$ and all $T_{j}$ for $j \in [k]$. That is, the set of agents of $\widetilde{T}$ is the union of the agents from $T_0,T_1,\dots, T_k$, and their preference lists in $\widetilde{T}$ is exactly the preference lists in the roommate instance they belong to (note that $T_0$ and the $T_j$ are defined over disjoint sets of agents). 

\begin{lemma}\label{lem:NPH-internally-stable}
${\cal Z}(\widetilde T)={\cal Z}_{ns}(\widetilde T)=\{\rho_1,\overline{\rho_1}, \dots, \rho_k,\overline{\rho_k}\}$, $z'w'$ is the only fixed pair of $\widetilde{T}$, and $E_S(\widetilde{T})=\widetilde{T}$ (i.e., $\widetilde{T}$ is stable). 
\end{lemma}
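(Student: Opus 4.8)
The statement is really three assertions, and my plan is to dispatch them in order, using the juxtaposition structure to reduce everything to the per-block analysis already done in Lemma~\ref{rotations-of-Tj}.

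First I would establish the claim about rotations. The key observation is that rotations respect the block decomposition: since $\tilde T$ is the disjoint juxtaposition of $T_0, T_1, \dots, T_k$ over pairwise disjoint agent sets, and since eliminating a rotation exposed in one block only deletes edges within that block (Definition~\ref{def:rotation}), any valid subtable of $\tilde T$ is itself the juxtaposition of valid subtables of the individual blocks, and a rotation exposed in such a subtable must lie entirely within a single block. Hence ${\cal Z}(\tilde T) = {\cal Z}(T_0) \cup \bigcup_{j=1}^k {\cal Z}(T_j)$. Now $T_0$ has every agent's list of length one, so no rotation is exposed in it and ${\cal Z}(T_0) = \emptyset$. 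For each $j$, Lemma~\ref{rotations-of-Tj} gives ${\cal Z}(T_j) = \{\rho_j, \overline{\rho_j}\}$ with $\rho_j, \overline{\rho_j}$ dual to each other. Duality is witnessed inside the block, so each $\rho_j$ (and $\overline{\rho_j}$) is non-singular in $\tilde T$ as well, giving ${\cal Z}(\tilde T) = {\cal Z}_{ns}(\tilde T) = \{\rho_1, \overline{\rho_1}, \dots, \rho_k, \overline{\rho_k}\}$ and ${\cal Z}_s(\tilde T) = \emptyset$.

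Second, for the fixed-pair claim: by definition $z'w'$ is a fixed pair since $w'$ is the only entry in $z'$'s list (and vice versa) and this persists in every valid subtable. Conversely, in any $T_j$ every agent has a list of length exactly two, and since (as just argued) no rotation elimination inside $\tilde T$ touches agents outside block $j$, eliminating $\rho_j$ or $\overline{\rho_j}$ reduces every agent of block $j$ to a single entry — but this is a choice, not forced, so no edge of $T_j$ is a fixed pair. So $z'w'$ is the only fixed pair.

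Third, for $E_s(\tilde T) = \tilde T$: I need every edge of $\tilde T$ to be a stable edge. By Lemma~\ref{lem:stable-pairs}, an edge is stable iff it lies in some non-singular rotation or is a fixed pair. The edge $z'w'$ is a fixed pair. For the blocks, Lemma~\ref{rotations-of-Tj} gives $T_j = \rho_j \cup \overline{\rho_j}$, i.e.\ every edge of $T_j$ appears in $\rho_j$ or $\overline{\rho_j}$ (interpreting a rotation as its underlying table of pairs $x_iy_i$); since both are non-singular rotations of $\tilde T$, every such edge is stable. Hence $E_s(\tilde T) = \tilde T$, so $\tilde T$ is stable.

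The step I expect to require the most care is the very first reduction — justifying cleanly that rotations of a juxtaposition decompose blockwise, in particular that there is no "interaction" rotation spanning two blocks and that the singular/non-singular status of $\rho_j$ is unchanged by the presence of the other blocks. This needs the observation that Phase~2 rotation eliminations act locally on blocks (an exposed rotation's $X$- and $Y$-sets, and the edges it deletes, all stay within one block), which follows directly from Definitions~\ref{def:valid_table} and~\ref{def:rotation} but should be spelled out. Everything after that is bookkeeping against Lemma~\ref{rotations-of-Tj} and Lemma~\ref{lem:stable-pairs}.
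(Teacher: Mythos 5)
Your proof is correct and follows essentially the same route as the paper's, which simply states that the rotation claim "follows immediately from Lemma~\ref{rotations-of-Tj} and the fact that the agent sets of $T_0,T_1,\dots,T_k$ are pairwise disjoint," that the fixed-pair claim is immediate, and that stability then follows from Lemma~\ref{lem:stable-pairs}. You have merely spelled out the blockwise-decomposition argument that the paper leaves implicit; no gap.
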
 
\begin{proof}
The first statement follows immediately from Lemma~\ref{rotations-of-Tj} and the fact that the sets of agents of $T_0,T_1,\dots,T_k$ are pairwise disjoint. The second is immediate. The third follows from the previous two and Lemma~\ref{lem:stable-pairs}.
\end{proof}


\paragraph{The function $A(\cdot )$.}
We now associate each literal $h_{r,i}$ with the pair $(v_{a},b)$, where variable $v_{a}$ corresponds to $h_{r,i}$ (in the form of $v_{a}$ or $\overline{v_{a}}$), and $b$ counts the occurrences of variable $v_a$ (as $v_{a}$ or $\overline{v_{a}}$) in $\phi$ as literal until $h_{r,i}$, including $h_{r,i}$. \\
 Let $A$ be the function that maps each literal $h_{r,i}$ to an agent as follows:
\[
A(h_{r,i})=\begin{cases}
z_{a,(2b-1)} & \text{if }h_{r,i}=\overline{v_{a}}\\
w_{a,(2b-1)} & \text{if }h_{r,i}=v_{a}
\end{cases}
\]
Note that $A(\cdot) $ is injective. We also observe the following.

\begin{lemma}\label{lemma:A-X-disjoint}
The images of functions $A(\cdot)$ and $\ell_{\widetilde T}(A(\cdot))$ are disjoint.
\end{lemma}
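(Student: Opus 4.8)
The plan is to unwind the definitions and argue by parity of indices. First I would recall the structure: each literal $h_{r,i}$ is mapped by $A$ to an agent of the form $z_{a,(2b-1)}$ or $w_{a,(2b-1)}$, where $a$ is the index of the variable $v_a$ underlying $h_{r,i}$ and $b$ is the occurrence count of $v_a$; crucially, the second index is always \emph{odd}, namely $2b-1$ with $1 \le b \le cnt(v_a)$, so it ranges over $\{1,3,5,\dots,n_j\}$ (recall $n_j = 2\,cnt(v_j)-1$ is odd). Next I would compute $\ell_{\tilde T}(A(h_{r,i}))$ explicitly using the preference lists in the construction of $T_j$. Since $\tilde T$ is a stable (hence valid, by Lemma~\ref{lem:T-star-is-valid} together with Lemma~\ref{lem:NPH-internally-stable}) table, condition \ref{valid_cond:first-is-last} of Definition~\ref{def:valid_table} tells us that $\ell_{\tilde T}(w) = z$ exactly when $f_{\tilde T}(z) = w$. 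Reading off the table for $T_j$: $f_{\tilde T}(z_{j,t}) = w_{j,t}$, so $\ell_{\tilde T}(w_{j,t}) = z_{j,t}$; and $f_{\tilde T}(w_{j,t}) = z_{j,t-1}$ (indices mod $n_j$), so $\ell_{\tilde T}(z_{j,t}) = w_{j,t+1}$.

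Now I carry out the case analysis. If $h_{r,i}$ is negative, $A(h_{r,i}) = z_{a,(2b-1)}$, and $\ell_{\tilde T}(z_{a,(2b-1)}) = w_{a,(2b-1)+1} = w_{a,2b}$, whose second index $2b$ is \emph{even}. If $h_{r,i}$ is positive, $A(h_{r,i}) = w_{a,(2b-1)}$, and $\ell_{\tilde T}(w_{a,(2b-1)}) = z_{a,(2b-1)} = z_{a,2b-1}$, whose second index $2b-1$ is \emph{odd}. So in the negative case the image of $\ell_{\tilde T}\circ A$ lies among $w$-agents with even second index, while the image of $A$ among $w$-agents always has odd second index (namely $2b-1$); these sets are disjoint by parity. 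In the positive case the image of $\ell_{\tilde T}\circ A$ lies among $z$-agents with odd second index of the form $2b-1$ — but I must check this cannot coincide with an element of $\mathrm{Im}(A)$, which for $z$-agents is also of the form $z_{a,2b'-1}$. The point is that these come from \emph{different} literals: $\ell_{\tilde T}(A(v_a\text{-occurrence }b)) = z_{a,2b-1}$, whereas $A$ sends the negative literal occurrence to $z_{a,2b'-1}$; since $A$ maps a positive literal to a $w$-agent and a negative literal to a $z$-agent, and here we already fixed $h_{r,i}$ positive so $A(h_{r,i})$ is a $w$-agent, any collision would have to be with $A(h_{r',i'})$ for some \emph{negative} literal $h_{r',i'}$ of variable $v_a$ with occurrence count $b$. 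So the real content is: can $\ell_{\tilde T}(A(h)) \in \mathrm{Im}(A)$ for any literal $h$ at all? I expect the cleanest route is to note $\mathrm{Im}(A) \subseteq \{z_{a,t}, w_{a,t} : t \text{ odd}\}$, and show $\mathrm{Im}(\ell_{\tilde T}\circ A) \subseteq \{w_{a,t}: t \text{ even}\} \cup \{z_{a,t} : t \text{ odd}\}$; the $w$-part is then disjoint from $\mathrm{Im}(A)$ by parity, so the only danger is the $z$-part.

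The main obstacle is therefore handling the $z$-agent overlap: I need that $z_{a,2b-1} = \ell_{\tilde T}(A(h))$ (coming from positive literal $h = v_a$, occurrence $b$) is never equal to $A(h') = z_{a',2b'-1}$ for a negative literal $h' = \overline{v_{a'}}$. This forces $a = a'$ and $b = b'$, i.e. $h$ and $h'$ are occurrences of the same variable at the same position in the enumeration — but $h = v_a$ is positive and $h' = \overline{v_a}$ is negative, so they are literally different literals occupying the same slot, which is impossible since each occurrence in $\phi$ is either positive or negative but not both. Hence no such collision exists. I would assemble these observations into: $\mathrm{Im}(A) \cap \mathrm{Im}(\ell_{\tilde T}\circ A) = \emptyset$, which is the claim. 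I expect this to be two or three short paragraphs once the preference-list computations of $f_{\tilde T}$ and $\ell_{\tilde T}$ are written out carefully, with the parity observation doing essentially all the work.
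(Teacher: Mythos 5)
Your argument is essentially the paper's own proof: the same parity observation that $\mathrm{Im}(A)$ consists only of agents with odd second index, the same computation of $\ell_{\tilde T}$ on $z$- and $w$-agents via validity of $\tilde T$, and the same closing step that $z_{a,2b-1}$ and $w_{a,2b-1}$ cannot both lie in $\mathrm{Im}(A)$ because the $b$-th occurrence of $v_a$ is either positive or negative but not both. One caveat, which applies equally to your write-up and to the paper's proof: the step $\ell_{\tilde T}(z_{a,2b-1})=w_{a,2b}$ tacitly assumes $2b-1<n_a$. For the last occurrence, $b=cnt(v_a)$, one has $2b-1=n_a$ and the preference list of $z_{a,n_a}$ wraps around, giving $\ell_{\tilde T}(z_{a,n_a})=w_{a,1}$, whose index is \emph{odd}; so if the first occurrence of $v_a$ is positive and the last is negative, $w_{a,1}$ lies in both images and the parity argument does not exclude it. This edge case is harmless where the lemma is actually applied (there the literals in play all evaluate to true under a fixed assignment, so a single variable never contributes both a positive and a negative occurrence), but neither your proof nor the paper's addresses it explicitly; relative to the paper, your proposal has no additional gap.
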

\begin{proof}
First observe that, if $A(h_{i,j})=w_{q,k}$ or $A(h_{i,j})=z_{q,k}$, then $k$ is odd. Therefore, by construction, all agents $w_{p,t}$ that are in the image of $\ell_{\widetilde T}(A(\cdot))$ have $t$ even, hence they are not in the image of $A(\cdot)$. Now consider an agent $z_{p,t}$ in the image of $\ell_{\widetilde T}(A(\cdot))$. By construction, $\ell_{\widetilde T}(w_{p,t})=z_{p,t}$. On the other hand, at most one of $z_{p,t}$ and $w_{p,t}$ is in the image of $A(\cdot)$, and the thesis follows.  \end{proof}

\paragraph{Step 2: Construction of $T$.} We now construct the roommate instance $T$ by adding to $\widetilde{T}$ certain antipodal edges. Hence, these edges will be of the form $xz$ with $z >_x f_{\widetilde{T}}(x)$ and $x <_z \ell_{\widetilde{T}}(z)$. For simplicity of exposition, we only present the addition of the edge to the row of $x$ or to the row of $z$, since the addition of the edge to the missing row is automatically implied by the antipodality. Moreover, unless otherwise specified, the relative order of all edges we add does not matter (i.e., if we add edges $xz$ and $xy$ with $x$ preferring $z,y$ to $f_{\widetilde T}(x)$, then we can arbitrarily have $z <_x y$ or $y <_x z$). 

\paragraph{Step 2a: Addition of antipodal edges.} 
For all $r \in [n-1]_0$, add $A(h_{r+1,1}),A(h_{r+1,2}),A(h_{r+1,3})$
to the lists of $\ell_{\widetilde T}(A(h_{r,1}))$, $\ell_{\widetilde T}(A(h_{r,2}))$, and
$\ell_{\widetilde T}(A(h_{r,3}))$ before entries in $\widetilde{T}$, where the first index of all literals is taken modulo $n$.

\paragraph{Step 2b: Adjust $T$ so that $E_S(T)=\widetilde{T}$.}
\begin{itemize}
\item Add each agent $a$ that is incident to any antipodal edge added
so far to the list of $z'$, such that $w'$ is the last preference
of $z'$.
\item For each agent $a$ that is incident to any of the antipodal edges added so far, add $z'$
as the first agent on its preference list after $\widetilde{T}$, that is,
$r(a,z')=r(a,\ell_{\widetilde{T}}(a))+1$. 
\end{itemize}

\begin{lemma}
$\widetilde{T}$ is the set of all stable edges of $T$, i.e., $\widetilde{T}=E_S(T)$.
\end{lemma}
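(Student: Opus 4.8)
The goal is to show that in the instance $T$ constructed from $\phi$, the stable subtable $E_S(T)$ equals $\tilde T$; equivalently, that none of the antipodal edges added in Steps 2a--2b becomes a stable pair in $T$. By Lemma~\ref{lem:stable-pairs}, an edge is stable if and only if it lies in a non-singular rotation of $T$ or is a fixed pair, so it suffices to argue that every edge of $T\setminus \tilde T$ fails to be a stable pair, while every edge of $\tilde T$ remains stable. The latter is the easier direction: by Lemma~\ref{lem:NPH-internally-stable} every edge of $\tilde T$ lies in some $\rho_j$ or $\overline{\rho_j}$ (or is the fixed pair $z'w'$), and these rotations survive into $T$ because the edges we add to each agent's list are all strictly \emph{better} than their $\tilde T$-entries (for the agents in the image of $A(\cdot)$ and $\ell_{\tilde T}(A(\cdot))$) and strictly \emph{worse} for $z'$ (whose $\tilde T$-partner $w'$ stays last); so running Phase~1 and then Phase~2 of Irving's algorithm on $T$, I would show the added edges get deleted before any $\rho_j$ is reached, leaving exactly the rotations of $\tilde T$.

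The core of the argument is to trace Irving's algorithm on $T$. First I would analyze Phase~1: for each agent $a$ in the image of $A(\cdot)$ or of $\ell_{\tilde T}(A(\cdot))$, the first entry of $a$'s list in $T$ is one of the newly added antipodal edges (for agents $\ell_{\tilde T}(A(h_{r,i}))$) or an edge to $z'$ or a $\tilde T$-edge. I would check that $z'$ ends up semiengaged to some incident agent $a$, and that this forces deletion, from each $a$ incident to an antipodal edge, of the edge $az'$ and hence (by the way Step 2b inserts $z'$ right after $\ell_{\tilde T}(a)$) that after Phase~1 every such $a$ has the antipodal edges still potentially present but $z'$ removed from all but one list. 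The key structural point, which I would extract carefully, is that the antipodal edges of Step 2a connect agent $\ell_{\tilde T}(A(h_{r,i}))$ to agent $A(h_{r+1,i'})$, and by Lemma~\ref{lemma:A-X-disjoint} the images of $A(\cdot)$ and $\ell_{\tilde T}(A(\cdot))$ are disjoint, so these added edges form a bipartite-like structure between the ``$A$-agents'' and the ``$\ell$-agents'' that is grafted onto the cycles $T_j$. I would then run Phase~2: each $T_j$ still exposes only $\rho_j,\overline{\rho_j}$ initially (the added edges are on agents' lists but not in first/second position in the relevant places, or get removed), so eliminating these rotations proceeds as in $\tilde T$, and the added antipodal edges get deleted during these eliminations (an agent $w_{j,t}$ or $z_{j,t}$ loses its worse entries when the incident rotation is eliminated, and the antipodal edges are precisely such worse entries). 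The net effect: Phase~2 on $T$ terminates at the same stable matchings as on $\tilde T$, so $E_S(T)=\tilde T$.

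A cleaner route to the same conclusion, which I would prefer to present, uses the antipodal machinery already developed: by Lemma~\ref{lem:wlog_antipodal}, any matching $M$ of $T$ containing a non-antipodal edge of $T\setminus\tilde T$ is blocked by an edge of $\tilde T$, hence cannot be stable; and every edge we added is antipodal by construction. So it remains to rule out that any \emph{antipodal} edge is stable, which by Theorem~\ref{augment_dual}(1) would require a stitched rotation with respect to $\tilde T$ all of whose edges are among the added ones. Here is where the structure of $T_0$ and the agent $z'$ does the work: any stitched rotation $\rho=(x_0,y_0),\dots,(x_{r-1},y_{r-1})$ must be exposed in $\tilde T\cup\rho$ with each $x_iy_i$ antipodal, and by Lemma~\ref{lem:stitched_rotation_paird_matching} there must be $M\in\Ss(\tilde T)$ with $x_iy_{i+1}\in M$ for all $i$; I would show the only antipodal edges available in $T$ that can satisfy the ``$y_{i+1}=f_{\tilde T}(x_i)$'' exposure condition are exactly those routed through $z'$ (by Step 2b every antipodal-incident agent has $z'$ as its first entry after $\tilde T$), and since $z'$ has the single fixed partner $w'$ in every $M\in\Ss(\tilde T)$, no such cyclic sequence closes up consistently with a stable matching of $\tilde T$ --- i.e., no stitched rotation exists in $T$ as given, only in the modified instances of Step 2a before the $z'$-repair. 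Hence $E_S(T)=\tilde T$.

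The main obstacle I anticipate is bookkeeping: making precise exactly which entries sit in first/second position in each agent's list in $T$ (and after Phase~1), since the definition of an exposed rotation in Definition~\ref{def:rotation} is sensitive to $f_{T'}$ and $s_{T'}$, and Steps 2a--2b insert several edges with only partially-specified relative order. I would handle this by proving a lemma pinning down $f_T$, $s_T$, and $\ell_T$ for every agent of $T$ (splitting into cases: $z'$, $w'$, $A$-agents, $\ell$-agents, and ``interior'' agents of the $T_j$ untouched by Step 2), and then checking the exposure/validity conditions mechanically; the disjointness from Lemma~\ref{lemma:A-X-disjoint} is what makes this case analysis finite and clean.
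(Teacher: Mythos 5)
Your preferred ``cleaner route'' contains a genuine error that would sink the proof. You propose to rule out stable antipodal edges by showing that no stitched rotation with respect to $\tilde T$ exists in $T$. But stitched rotations with respect to $\tilde T$ \emph{do} exist in $T$ whenever $\phi$ is satisfiable --- that is precisely the content of Lemma~\ref{lem:if 3-SAT then IC}, and the entire reduction hinges on it. They are built from the Step~2a edges $(\ell_{\tilde T}(A(h_{r-1,j_{r-1}})),A(h_{r,j_r}))$ and do not involve $z'$ at all, so the claim that every candidate exposure is ``routed through $z'$'' is false. The underlying confusion is between two different notions: a stitched rotation certifies (via Theorem~\ref{augment_dual}) a matching $M$ that is not blocked by any edge of $\tilde T$, i.e.\ one that can be added to $\Ss(\tilde T)$ while preserving \emph{internal stability}; it does not certify that $M\in\Ss(T)$. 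For an edge to be a stable edge of $T$ it must lie in a matching unblocked by \emph{every} edge of $T$, including the Step~2b edges incident to $z'$. The construction is engineered exactly so that these two notions come apart: $E_S(T)=\tilde T$ holds unconditionally, while internal closedness of $\Ss(\tilde T)$ depends on $\phi$.

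The paper's actual proof is a short, direct blocking-pair argument that you should adopt. For the containment $\tilde T\subseteq E_S(T)$, it suffices to note that every added edge is antipodal and hence blocks no matching of $\Ss(\tilde T)$, so $\Ss(\tilde T)\subseteq \Ss(T)$. For the converse, Step~2b edges are all incident to $z'$, and since $w'$'s list contains only $z'$, the fixed pair $z'w'$ belongs to every stable matching of $T$, so no other edge at $z'$ can be stable. For a Step~2a edge $pq$ (with $q$ before the $\tilde T$-entries on $p$'s list and $p$ after them on $q$'s list), suppose $pq\in M$ for some $M\in\Ss(T)$; then $z'w'\in M$, but $z'$ prefers $q$ to $w'$ and $q$ prefers $z'$ (placed immediately after $\ell_{\tilde T}(q)$) to $p$, so $z'q$ blocks $M$ --- a contradiction. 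Your first route (tracing Phases~1 and~2 of Irving's algorithm through $T$) is not wrong in spirit, but it is left at the level of intentions and carries exactly the bookkeeping burden you identify; the fixed-pair argument avoids it entirely.
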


\begin{proof}
 We know by Lemma~\ref{lem:NPH-internally-stable} that $\widetilde T$ is a stable table. It suffices to prove that all antipodal edges added in Step 2a and Step 2b to $\widetilde{T}$ are not stable edges of $T$. Since all edges we added are antipodal, none of them blocks any matching in $\mathcal{S}(\widetilde{T})$. Hence, $\widetilde{T}\subseteq E_S(T)$. 

Let us conversely show that $E_S(T)\subseteq \widetilde{T}$. Consider first  edges added in Step 2b. Since $w'$ has only $z'$ on its list and $w'z' \in \widetilde{T}$, we deduce that any matching from ${\cal S}(T)$ matches $w'$ to $z'$. Hence, any matching that matches $z'$ to any other partner would necessarily be unstable. Observe that all edges added in Step 2b involve $z'$ (but not $w'$), showing that none of them belongs to $E_S(T)$. 

It remains to consider edges added in Step 2a. Suppose for contradiction that a stable matching $M$ contains some antipodal edge $pq$ with $p,q\neq z'$, where $q$ lies before $\widetilde{T}$ on $p$'s list (and $p$ lies after $\widetilde{T}$ on $q$'s list). Since $z'w'$ is a fixed edge, therefore $z'w'\in M$. However, $z'$ prefers $q$ to $w'$, and $q$ prefers $z'$ to $p$, so $z'q$ would block $M$, which contradicts the assumption that $M$ is stable. Therefore none of the antipodal edges is a stable edge, and we conclude that $\widetilde{T}=E_S(T)$.
\end{proof}

\paragraph{Concluding the proof.} We now show that $\phi$ is satisfiable if and only if there is a stitched rotation wrt $\widetilde T$. Because of Theorem~\ref{augment_dual},  Theorem~\ref{thm:coNp-hard-check-closedness} follows.

\begin{lemma}\label{lem:if 3-SAT then IC}
If $\phi$ is satisfiable, then there exists a stitched rotation $\rho$
with respect to $\widetilde{T}$.
\end{lemma}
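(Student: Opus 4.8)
The plan is to exhibit, given a satisfying assignment of $\phi$, an explicit stitched rotation $\rho$ with respect to $\tilde T$. The key idea is that the antipodal edges added in Step 2a form, for each clause $C_r$, a "link" from the three agents $\ell_{\tilde T}(A(h_{r,1})),\ell_{\tilde T}(A(h_{r,2})),\ell_{\tilde T}(A(h_{r,3}))$ to the three agents $A(h_{r+1,1}),A(h_{r+1,2}),A(h_{r+1,3})$ of the next clause (indices modulo $n$). A satisfying assignment picks, for each clause $C_r$, one true literal $h_{r,i(r)}$; I will use this choice to select exactly one antipodal edge per clause, and then show that these $n$ chosen antipodal edges, together with suitable fixed/stable edges of $\tilde T$ inside the gadgets $T_j$, close up into a single cycle which is a rotation exposed in $\tilde T \cup \rho$ and is non-singular there, hence a stitched rotation.

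\textbf{Key steps.} First I would set up notation: for each clause $r$ let $h_{r,i(r)}$ be a true literal under the fixed satisfying assignment, write $a_r := A(h_{r,i(r)})$ and $b_r := \ell_{\tilde T}(a_r)$. By Step 2a the edge $b_r\, a_{r+1}$ is among the antipodal edges added (it is the one placed on $b_r$'s list before $\tilde T$), so it lies in $T\setminus \tilde T$. The candidate rotation will alternate: $a_{r+1}$ is matched (in the "new" matching) to $b_r$ via this antipodal edge, and $b_r$ is connected back to $a_r$ through the gadget. Concretely, if $h_{r,i(r)}=\overline{v_j}$ then $a_r=z_{j,2b-1}$ and $b_r=\ell_{\tilde T}(z_{j,2b-1})=w_{j,2b}$; if $h_{r,i(r)}=v_j$ then $a_r=w_{j,2b-1}$ and $b_r=z_{j,2b-1}$. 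The crucial point, which I must verify, is that consistency of the satisfying assignment guarantees that within each variable gadget $T_j$ we never simultaneously select a $z$-agent forcing one orientation of $\rho_j$ and a $w$-agent forcing the opposite orientation: a variable used only positively (resp. only negatively) contributes agents of one type only, matching Lemma~\ref{lemma:A-X-disjoint}. Then I would assemble the cyclic sequence of ordered pairs $\rho=(x_0,y_0),(x_1,y_1),\dots$ where the $x_iy_i$ are antipodal edges $b_r a_{r+1}$ (so $y_i = f_{\tilde T\cup\rho}(x_i)$ after augmentation, since these were inserted at the front of the list) and the $x_i y_{i+1}$ are edges of $\tilde T$ internal to the gadgets, chosen so that $y_{i+1}=s_{\tilde T\cup\rho}(x_i)=f_{\tilde T}(x_i)$. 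Checking Definition~\ref{def:rotation} ($y_i=f(x_i)$, $y_{i+1}=s(x_i)$ for all $i$, indices modulo $r$) amounts to a direct computation using the structure of the gadgets and the order in which Step 2a and Step 2b insert edges.

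\textbf{Non-singularity and closing the cycle.} Having produced a rotation $\rho$ exposed in $\tilde T\cup\rho$ all of whose pairs $x_iy_i$ are antipodal with respect to $\tilde T$ (true by Step 2a, before the $z'$-entries, so $x_i<_{y_i}\ell_{\tilde T}(y_i)$ and $y_i>_{x_i}f_{\tilde T}(x_i)$), it remains to show $\rho\in{\cal Z}_{ns}(\tilde T\cup\rho)$. By Lemma~\ref{lem:stitched_rotation_paird_matching} it suffices to show $\tilde T\cap\rho=\emptyset$ (clear: all $x_iy_i$ are antipodal edges, not in $\tilde T$) and that there exists $M\in{\cal S}(\tilde T)$ with $x_iy_{i+1}\in M$ for all $i$. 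For this I would take $M$ to be the stable matching of $\tilde T$ obtained by eliminating, in each gadget $T_j$ touched by $\rho$, the rotation $\rho_j$ or $\overline{\rho_j}$ according to whether the selected agents of that gadget are $z$-type or $w$-type (consistency of the assignment makes this well-defined, as noted above), and eliminating nothing in the untouched gadgets; one checks that this $M$ indeed matches each $x_i$ to $y_{i+1}=f_{\tilde T}(x_i)$. Thus $\rho$ is a stitched rotation.

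\textbf{Main obstacle.} The delicate part is not the topological "closing up" of the cycle — that is forced by the clause-to-clause links — but verifying that the selection of one true literal per clause yields a \emph{globally consistent} choice inside each variable gadget, so that the pairs $x_iy_{i+1}$ can all be realized simultaneously in one stable matching $M\in{\cal S}(\tilde T)$. This is exactly where satisfiability (as opposed to merely "one literal per clause") is used: a consistent truth assignment never asks a single variable gadget to rotate in two incompatible directions at once. I expect the bookkeeping tying the index $b$ in $A(h_{r,i})=z_{a,2b-1}$ or $w_{a,2b-1}$ to the position along $\rho_a$, and the verification that Step 2a really inserts $b_r a_{r+1}$ at the front of $b_r$'s list while Step 2b's $z'$-insertions sit strictly behind $\tilde T$, to be the most error-prone but ultimately routine portion.
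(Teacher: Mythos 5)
Your proposal is correct and follows essentially the same route as the paper's proof: select one true literal per clause, stitch the corresponding antipodal edges from Step~2a into a single cycle, verify the rotation conditions via the front-of-list placement and validity of $\tilde T$, and invoke Lemma~\ref{lem:stitched_rotation_paird_matching} by exhibiting a stable matching $M$ built from the correctly oriented gadget rotations (the paper's Claim~\ref{cl:Y-set} is exactly your consistency observation, though note it is that claim, not Lemma~\ref{lemma:A-X-disjoint}, that encodes it; Lemma~\ref{lemma:A-X-disjoint} is instead what the paper uses to show no agent repeats along the cycle).
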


\begin{proof}
Let $\vec{v}$ be any valid assignment to $\phi$. Construct mapping
$f$ from $\vec{v}$ to a set of ordered pairs $\rho$ of edges from $T$ as follows. First, for all $r \in [n-1]_0$, 
pick any $j_r \in \{1,2,3\}$ such that $h_{r,j_r}$ evaluates to true under assignment $\vec{v}$. 
Then set
$$\rho=  (\ell_{\widetilde T}(A(h_{n-1,j_{n-1}})),A(h_{0,j_0})), (\ell_{\widetilde T}(A(h_{n-2,j_{n-2}})),A(h_{n-1,j_{n-1}})), \dots, (\ell_{\widetilde T}(A(h_{0,j_{0}})),A(h_{1,j_1})).$$
We claim that $\rho$ is a stitched rotation with respect to $\widetilde{T}$.




We first claim that each agent appears in at most one edge from $\rho$. Indeed, $A(\cdot)$ is injective by construction, while $\ell_{\widetilde T}(A(\cdot))$ is injective since by Lemma~\ref{lem:NPH-internally-stable} $\widetilde{T}$ is a stable table, and by Lemma~\ref{lem:T-star-is-valid} stable tables are valid. Hence, an agent appears in more than one edge if and only if $A(h_{r,j_r})=\ell_{\widetilde T}(A(h_{\overline{r},j_{\overline{r}}}))$ for some $r, \overline r \in [n-1]_0$. However, by Lemma~\ref{lemma:A-X-disjoint}, the images of $\ell_{\widetilde T}(A (\cdot))$ and $A(\cdot)$ are disjoint, and the claim follows.  

We next prove $\rho$ is a rotation exposed in $T'=\widetilde{T}\cup \{\rho\}$. 
By construction, $\ell_{\widetilde T} (A(h_{{r-1},j_{r-1}}),A(h_{r,j_r}))$ is an antipodal edge and $A(h_{r,j_r})$ is added in the list of $\ell_{\widetilde T} (A(h_{{r-1},j_{r-1}}))$ before entries from $\widetilde T$ (see Step 2a). Since we argued above that no other antipodal edge incident to $\ell_{\widetilde T}(A(h_{{r-1},j_{r-1}}))$ appears in $\rho$, we conclude that $f_{T'}(\ell_{\widetilde T}(A(h_{{r-1},j_{r-1}})))=A(h_{r,j_r})$. A symmetric argument lets us conclude that $\ell_{T'}(A(h_{r,j_r}))=\ell_{\widetilde T}(A(h_{{r-1},j_{r-1}}))$. Moreover,
$$s_{T'}(\ell_{\widetilde T}(A(h_{{r-1},j_{r-1}})))= f_{\widetilde T}(\ell_{\widetilde T}(A(h_{r-1,j_{r-1}}))=A(h_{r-1,j_{r-1}})$$
by validy of $\widetilde T$ (Lemma~\ref{lem:T-star-is-valid}). We conclude that $\rho$ is a rotation exposed in $\widetilde{T} \cup \{\rho\}$. 

In order to show that $\rho$ is a stitched rotation with respect to $\widetilde{T} \cup \{\rho\}$, by Lemma~\ref{lem:stitched_rotation_paird_matching} it suffices to show that there exists $M\in\Ss(\widetilde{T})$ such that $\ell_{\widetilde T}(A(h_{{r-1},j_{r-1}})) A(h_{r-1,j_{r-1}}) \in M$ for all $r \in [n-1]_0$.

Recall that, from Lemma~\ref{lem:NPH-internally-stable}, all rotations of $\widetilde{T}$ are non-singular and disjoint,
and $z'w'$ is the only fixed pair of $\widetilde{T}$. Using Theorem~\ref{thm:T_lem:eliminating-rotations-preserves-exposition} we deduce that, given $M\in {\cal M}(T)$, $M$ is stable if and only if $w'z' \in M$ and, for each $j \in [k]$, one of $\rho_j,\overline{\rho_j}$ is contained in $M$, and the other is disjoint from $M$.

\begin{claim}\label{cl:Y-set}
Fix $j \in [k]$. Then either $Y(\rho) \cap Y(\rho_j)=\emptyset$, or $Y(\rho)\cap Y(\overline{\rho_j})=\emptyset$.
\end{claim}

\noindent \underline{Proof of Claim}. Notice that $Y(\rho_j)=\{w_{j,1}, \dots, w_{j,n_j}\}$ and $Y(\overline{\rho_j})=\{z_{j,1}, \dots, z_{j,n_j}\}$. On the other hand, $Y(\rho)=\{A(h_{r,j_r})\}_{r \in [n-1]_0}$, where, for $r \in [n-1]_0$, $h_{r,j_r}$ is a literal that evaluates to true in $\vec{v}$. By definition of $A(\cdot)$, either $w_{j,q} \notin Y(\rho)$ for every $q$, or $z_{j,q} \notin Y(\rho)$ for every $q$. The thesis follows. 
$\hfill \diamond$

\smallskip 

By Claim~\ref{cl:Y-set} we can define for each $j \in [k]$ a rotation $\rho^\emptyset_j \in \{\rho_j,\overline{\rho_j}\}$ such that $Y(\rho^\emptyset_j) \cap Y(\rho)= \emptyset$. Let $\rho^*_j = \{\rho_j,\overline{\rho_j}\} \setminus \{\rho^\emptyset_j\}$, and consider $M= \{w'z'\} \uplus_{j=1}^k \rho^*_j $, which is a stable matching because of Theorem~\ref{thm:T_lem:eliminating-rotations-preserves-exposition} as discussed above. 
Let $r \in [n-1]_0$. If $A(h_{r,j_{r}})=z_{j,q}$ for some $j \in [k]$ and $q \in \mathbb{N}$, then $\rho^*_j=\overline{\rho_j}$ and $\ell_{\widetilde T}(A(h_{r,j_{r}})) A(h_{r,j_{r}})=w_{j,q+1} z_{j,q}\subseteq \rho^*_j$. Similarly, if $A(h_{r,j_{r}})=w_{j,q}$, we have $\ell_{\widetilde T}(A(h_{r,j_{r}})) A(h_{r,j_{r}})=z_{j,q} w_{j,q}\subseteq \rho^*_j=\rho_j$, concluding the proof.\end{proof}

\begin{lemma}\label{lem:if IC then 3-SAT}
If there exists $T'\supset \widetilde{T}$ as defined in the output of \texttt{CIC}($\widetilde{T}, T$), then $\phi$ is satisfiable. 
\end{lemma}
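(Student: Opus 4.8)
Here is how I would approach Lemma~\ref{lem:if IC then 3-SAT} (equivalently: if $\Ss(\tilde T)$ is not internally closed, then $\phi$ is satisfiable). Suppose $\Ss(\tilde T)$ is not internally closed; since $\tilde T=E_S(T)$ is a stable table, there is a matching $M_0\in{\cal M}(T)$ not blocked by any edge of $\tilde T$ with $M_0\notin\Ss(T)$ (any $M_0\in\Ss(T')\setminus\Ss(\tilde T)$ for the witnessing table $T'\supsetneq\tilde T$ works: it is unblocked by edges of $T'\supseteq\tilde T$, and if it were in $\Ss(T)$ it would lie in $E_S(T)=\tilde T$ and be stable there). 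By Theorem~\ref{augment_dual}, part~1, there is a stitched rotation $\rho=(x_0,y_0),\dots,(x_{m-1},y_{m-1})$ with respect to $\tilde T$, and by Lemma~\ref{lem:stitched_rotation_paird_matching} there is $M\in\Ss(\tilde T)$ with $\tilde T\cap\rho=\emptyset$ and $x_iy_{i+1}\in M$ for all $i$ (indices mod $m$). By Lemma~\ref{lem:NPH-internally-stable} and Theorem~\ref{thm:T_lem:eliminating-rotations-preserves-exposition}, $M=\{z'w'\}\uplus_{j=1}^k\rho_j^*$ with $\rho_j^*\in\{\rho_j,\overline{\rho_j}\}$. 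The plan is to turn the choices $\rho_j^*$ into a satisfying assignment of $\phi$.

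The first step is to determine the structure of $\rho$. Since $z'w'$ is the only fixed pair of $\tilde T$ and $w'$ is adjacent only to $z'$ in $T$, no edge of $\rho$ is incident to $w'$; and if some $x_i$ or $y_i$ equalled $z'$, then $x_iy_{i+1}\in M$ together with $M(z')=w'$ would force an edge of $\rho$ incident to $w'$ — a contradiction. Hence $z',w'\notin\rho$, so every edge $x_iy_i$ of $\rho$ is one of the antipodal edges introduced in Step~2a, i.e. $\{x_i,y_i\}=\{\ell_{\tilde T}(A(h_{r,\iota})),A(h_{r+1,\iota'})\}$ for some $r\in[n-1]_0$ and $\iota,\iota'\in\{1,2,3\}$ (clause indices mod $n$). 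Comparing the antipodal orientation of such an edge (the $\ell_{\tilde T}(A(\cdot))$-endpoint ranks the other above its $\tilde T$-best, the $A(\cdot)$-endpoint ranks the other below its $\tilde T$-worst) with the fact that $y_i=f_{\tilde T\cup\rho}(x_i)>_{x_i}f_{\tilde T}(x_i)$, one gets $x_i=\ell_{\tilde T}(A(h_{r(i),\iota_i}))$ and $y_i=A(h_{r(i)+1,\iota'_i})$. By Lemma~\ref{lemma:A-X-disjoint} no $x_i$ equals any $y_j$, and the $x$-coordinates of a rotation are pairwise distinct, so $x_i$ occurs in exactly one pair of $\rho$; therefore in $\tilde T\cup\rho$ the list of $x_i$ is its $\tilde T$-list with $y_i$ prepended, whence $y_{i+1}=s_{\tilde T\cup\rho}(x_i)=f_{\tilde T}(x_i)=f_{\tilde T}(\ell_{\tilde T}(A(h_{r(i),\iota_i})))=A(h_{r(i),\iota_i})$, the last equality by validity of $\tilde T$ (Lemma~\ref{lem:T-star-is-valid}). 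Since also $y_{i+1}=A(h_{r(i+1)+1,\iota'_{i+1}})$ and $A$ is injective, $r(i)\equiv r(i+1)+1\pmod n$; so $\rho$ runs through the clauses in decreasing cyclic order and in particular touches every clause index $r\in[n-1]_0$.

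The second step is to read off and verify the assignment. Set $v_p$ TRUE iff $\rho_p^*=\rho_p$ (i.e. $z_{p,i}w_{p,i}\in M$ for all $i$), and FALSE iff $\rho_p^*=\overline{\rho_p}$. Fix a clause $C_r$ and an index $i$ with $r(i)=r$; by Lemma~\ref{lem:stitched_rotation_paird_matching}, $x_iy_{i+1}=\ell_{\tilde T}(A(h_{r,\iota_i}))\,A(h_{r,\iota_i})\in M$. If $h_{r,\iota_i}$ is the $b$-th occurrence of its variable $v_p$ and is positive, $h_{r,\iota_i}=v_p$, then $A(h_{r,\iota_i})=w_{p,2b-1}$ and $\ell_{\tilde T}(w_{p,2b-1})=z_{p,2b-1}$, so $z_{p,2b-1}w_{p,2b-1}\in M$ is an edge of $\rho_p$, forcing $\rho_p^*=\rho_p$, i.e. $v_p$ TRUE, i.e. $h_{r,\iota_i}$ satisfied; if $h_{r,\iota_i}=\overline{v_p}$, then $A(h_{r,\iota_i})=z_{p,2b-1}$ and $\ell_{\tilde T}(z_{p,2b-1})=w_{p,2b}$, so $z_{p,2b-1}w_{p,2b}\in M$ is an edge of $\overline{\rho_p}$, forcing $\rho_p^*=\overline{\rho_p}$, i.e. $v_p$ FALSE, i.e. $h_{r,\iota_i}$ again satisfied. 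Since every clause index is hit, every clause is satisfied, so $\phi$ is satisfiable; combined with Lemma~\ref{lem:if 3-SAT then IC} and Theorem~\ref{augment_dual} this yields Theorem~\ref{thm:coNp-hard-check-closedness}.

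The step I expect to be the main obstacle is the middle paragraph: pinning down the orientation of each Step~2a edge inside $\rho$ and deducing that $\rho$ must traverse the clauses in cyclic order (hence hit each clause). The delicate point is computing $s_{\tilde T\cup\rho}(x_i)$, which needs the fact that each $x_i$ sits in a single pair of $\rho$; Lemma~\ref{lemma:A-X-disjoint} is exactly what makes this go through, and this is where the ``doubling'' $n_j=2\,cnt(v_j)-1$ in the variable gadgets is actually used.
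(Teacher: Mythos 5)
Your proof is correct and follows essentially the same route as the paper: obtain a stitched rotation via Theorem~\ref{augment_dual}, show it must consist of Step~2a edges traversing all clause indices cyclically (using $s_{\tilde T\cup\rho}(x_i)=f_{\tilde T}(x_i)$, validity of $\tilde T$, and injectivity of $A$), and extract a satisfying assignment from the paired stable matching guaranteed by Lemma~\ref{lem:stitched_rotation_paird_matching}. The only differences are presentational — you define the assignment directly from which of $\rho_j,\overline{\rho_j}$ the witness matching contains and verify each selected literal, whereas the paper defines it from the selected literals and rules out conflicts by contradiction — and you spell out the orientation/second-entry computation (via Lemma~\ref{lemma:A-X-disjoint}) in somewhat more detail than the paper does.
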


\begin{proof}
Let $M \in {\cal S}(T')\setminus \Ss(\widetilde T)$. By Theorem~\ref{augment_dual}, there exists a rotation $\rho$ stitched to $\widetilde T$.



\begin{claim}\label{cl:rho-all-clauses} 
For appropriate indices $j_0,\dots, j_{n-1}$, we have $$\rho = (\ell_{\widetilde T}(A(h_{n-1,j_{n-1}})), A(h_{0,j_0})), \ (\ell_{\widetilde T}(A(h_{n-2,j_{n-2}})), A(h_{n-1,j_{n-1}})),  \dots, (\ell_{\widetilde T}(A(h_{0,j_0})), A(h_{1,j_1})).$$ 
\end{claim} 
\noindent \underline{Proof of Claim}. $\rho \cap \widetilde T=\emptyset$ by definition of stitched rotation. Hence, $\rho$ is made of pairs added during Step 2a, that is, pairs from $\rho$ are of the form $(\ell_{\widetilde T}(A(h_{i-1,j})), A(h_{i,j'}))$ for some indices $i,j,j'$. Let $(x_{1},y_{1})=(\ell_{\widetilde T}(A(h_{i-1,j})), A(h_{i,j'}))$. We claim that the edge $(x_{0},y_{0})$ that precedes $(x_{1},y_{1})$ in $\rho$ is $(\ell_{\widetilde T}(A(h_{i,j'})), A(h_{i+1,j''}))$ for some $j''$. The statement then follows inductively. 

First observe that $$ A(h_{i,j'})=s_{\widetilde T \cup \rho}(x_{0}) = f_{\widetilde T}(x_{0})$$
by definition of rotation and since $\rho$ is a rotation exposed in $\widetilde T \cup \rho$ by definition of stitched rotation. By Lemma~\ref{lem:T-star-is-valid}, $\widetilde T$ is a valid table, hence $x_0=\ell_{\widetilde T}(A(h_{i,j'}))$. Using again the fact that $x_0 y_0$ was added during Step 2a, we deduce $y_0=A(h_{i+1,j''})$ for some index $j''$, as required. $\hfill \diamond$

\smallskip

Following Claim \ref{cl:rho-all-clauses}, we construct the following assignment of variables $\vec{v}=\{v_a\}_{a \in [k]}$: for each variable $v_a$, if $v_a = h_{r,j_r}$ for some $r,j_r$ such that $(\ell_{\widetilde{T}}(A(h_{r-1,j_{r-1}})),A(h_{r,j_r})) \in \rho$, set $v_a$ to true; else, set $v_a$ to false.  It suffices to show that the mapping above defines a true / false assignment for each variable $v_a$ satisfying the input 3-SAT formula $\phi$. Recall that $A$ is an injective function, so each $A(h_{r,j_r})$ in $\rho$ corresponds to a unique literal $h_{r,j_r}$. We start by observing the following.

\begin{claim}\label{cl:assignment-valid}
There does not exist $(\ell_{\widetilde{T}}(A(h_{r-1,j_{r-1}})),A(h_{r,j_r})), (\ell_{\widetilde{T}}(A(h_{r'-1,j_{r'-1}})),A(h_{r',j_{r'}})) \in \rho$ such that $h_{r,j_r}=v_a$ and $h_{r',j_{r'}}=\overline{v_a}$.
\end{claim}

\noindent \underline{Proof of Claim}. Suppose by contradiction the statement is false, 
 let $h_{r,j_r}$ be the $b$-th appearance of $v_a$ (counting $v_a$ or $\overline{v_a}$) in $\phi$, and $h_{r,j_r}$ be the $b'$-th appearance. Then by definition of function $A$,  $A(h_{r,j_r})=w_{a,2b-1}$ and $A(h_{r',j_{r'}})=z_{a,2b'-1}$.

Recall that $\rho$ is a stitched rotation w.r.t.~$\widetilde T$. By Lemma \ref{lem:stitched_rotation_paird_matching}, there exists a matching $M'\in \Ss(\widetilde{T})$ such that $\ell_{\widetilde T}(w_{a,2b-1})w_{a,2b-1}=z_{a,2b-1}w_{a,2b-1}\in M'$ and $\ell_{\widetilde T}(z_{a,2b'-1})z_{a,2b'-1}=w_{a,2b'}z_{a,2b'-1}\in M'$. However, by Lemma~\ref{rotations-of-Tj} and Theorem~\ref{thm:T_lem:eliminating-rotations-preserves-exposition}, any matching from ${\cal S}(\widetilde T)$ contains one of $\{z_{a,1}w_{a,1},z_{a,2}w_{a,2},$ $\dots,z_{a,n_a}w_{a,n_a}\}$ and $\{w_{a,1}z_{a,n_1},w_{a,2}z_{a,1},\dots,w_{a,n_a}z_{a,n_a-1}\}$ and is disjoint from the other, leading to the required contradiction. $\hfill \diamond$

\smallskip

To conclude the proof of the lemma, observe that the assignment that we constructed satisfies the $3$-SAT instance, since by Claim~\ref{cl:assignment-valid}, at least one literal per clause is set to true. \end{proof}

\section{On vNM stable sets in roommate instances}\label{sec:vNM_is_NP_hard}
In this section we show the implication of our results on internal closedness and for vNM stability. We show that the instance we constructed to prove coNP-hardness of \texttt{CvNMS} directly implies coNP-hardness of \texttt{CvNMS} and \texttt{FvNMS}. 

\subsection{\texttt{CvNMS} is co-NP-hard}

For any 3-SAT instance $\phi$, we follow the exact same construction of $\widetilde{T}$ and $T$ as in Section \ref{sec:NP_hard_deciding_internally_closed}. Recall that $\widetilde{T}$ consists exactly of all stable edges of $T$, i.e. $\widetilde{T}=\{M:M\in\Ss(T)\}$.

\begin{lemma}\label{cl:vNM iff IC}
${\cal S}(\widetilde{T})$ is vNM stable if and only if $\Ss(\widetilde{T})$ is internally closed.
\end{lemma}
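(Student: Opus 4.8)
The plan is to use Lemma~\ref{lem:vNM=IC+ES}, which says that a set of matchings is vNM-stable if and only if it is internally closed \emph{and} externally stable. Applying this to $\mathcal{S}(\tilde T)$, the forward direction is immediate: if $\mathcal{S}(\tilde T)$ is vNM-stable, then in particular it is internally closed. So the whole content of the lemma is the converse: assuming $\mathcal{S}(\tilde T)$ is internally closed, I must show it is also externally stable, i.e. that every matching $M \in \mathcal{M}(T) \setminus \mathcal{S}(\tilde T)$ is blocked by some matching in $\mathcal{S}(\tilde T)$ — equivalently, blocked by some edge of $\tilde T$.

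Here is where I would exploit the specific structure of the constructed instance. Suppose, for contradiction, that $\mathcal{S}(\tilde T)$ is internally closed but not externally stable. Then there is a matching $M \in \mathcal{M}(T)$ that is not blocked by any edge of $\tilde T = E_S(T)$, and $M \notin \mathcal{S}(\tilde T)$ (note $M\notin\mathcal{S}(T)$ either, since $\mathcal{S}(T)=\mathcal{S}(\tilde T)$ as $\tilde T$ is the stable subtable and is itself stable). By Theorem~\ref{augment_dual}, part 1, applied with $T^*=\tilde T$, the existence of such an $M$ forces the existence of a stitched rotation $\rho$ with respect to $\tilde T$. But then Theorem~\ref{augment_dual}, part 2, produces a matching $M'$ with $\rho \subseteq M'$ such that $\{M'\} \cup \mathcal{S}(\tilde T)$ is internally stable, and since $\rho \cap \tilde T = \emptyset$ (a stitched rotation uses only antipodal, hence non-stable, edges), $M' \notin \mathcal{S}(\tilde T)$. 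This contradicts the assumption that $\mathcal{S}(\tilde T)$ is internally closed (i.e. inclusionwise maximal internally stable). Hence no such $M$ exists, $\mathcal{S}(\tilde T)$ is externally stable, and by Lemma~\ref{lem:vNM=IC+ES} it is vNM-stable.

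I should double-check one subtlety: in this argument I used Theorem~\ref{augment_dual} with $T^* = \tilde T = E_S(T)$, which is exactly the hypothesis of that theorem, so it applies directly; and I used that $\tilde T$ is a \emph{stable} table (Lemma~\ref{lem:NPH-internally-stable}) so that $\mathcal{S}(\tilde T)$ is a meaningful object and $\tilde T \cap \rho = \emptyset$ for a stitched rotation $\rho$. The only place where I am being a little slick is in asserting that external-stability fails $\iff$ there is a matching not blocked by any edge of $\tilde T$ lying outside $\mathcal{S}(\tilde T)$: this is just unwinding the definition of externally stable together with the observation that $\mathcal{S}(\tilde T)$ internally stable means no edge of $\tilde T$ blocks any matching in $\mathcal{S}(\tilde T)$, so "blocked by some $M' \in \mathcal{S}(\tilde T)$" is the same as "blocked by some edge of $\tilde T$". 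The main obstacle is essentially bookkeeping — making sure the two halves of Theorem~\ref{augment_dual} are quoted with the right table in the right slot — rather than any genuinely new argument; in fact this lemma is almost a corollary of Theorem~\ref{augment_dual}, and I expect the author's proof to be of comparable brevity, possibly even observing that the equivalence holds for \emph{any} instance whose stable subtable equals the given table, not just the 3-SAT gadget.
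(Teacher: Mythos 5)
Your proof is correct, but the hard direction (internally closed $\Rightarrow$ externally stable) is argued along a genuinely different route than the paper's. The paper first proves, as a standalone claim, that $\Ss(\tilde T)=\Ss(T)$ -- using that every edge of $T\setminus\tilde T$ is by construction antipodal and hence cannot block any matching of $\Ss(\tilde T)$ -- and then observes that under this equality a matching $M\subseteq T$ that is unblocked by $\Ss(\tilde T)$ automatically also fails to block anything in $\Ss(\tilde T)=\Ss(T)$, so $\{M\}\cup\Ss(\tilde T)$ is internally stable; external stability and internal closedness thus become literally the same condition, and Lemma~\ref{lem:vNM=IC+ES} finishes. You instead push the unblocked matching $M$ through Theorem~\ref{augment_dual}: part 1 extracts a stitched rotation, part 2 converts it back into a matching $M'$ for which internal stability of $\{M'\}\cup\Ss(\tilde T)$ is guaranteed and $M'\notin\Ss(\tilde T)$ since $\rho\cap\tilde T=\emptyset$, contradicting closedness. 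This costs you the heavier stitched-rotation machinery but buys generality: your argument uses nothing about the gadget beyond $\tilde T=E_S(T)$ being a stable table, exactly as you suspect at the end, whereas the paper's key claim is proved via the antipodal-edge structure of the construction. One small caveat: your parenthetical justification of $\Ss(T)=\Ss(\tilde T)$ (``as $\tilde T$ is the stable subtable and is itself stable'') does not actually establish the nontrivial inclusion $\Ss(\tilde T)\subseteq\Ss(T)$ -- that is precisely what the paper's claim proves via antipodality. Fortunately you only ever use the trivial inclusion $\Ss(T)\subseteq\Ss(\tilde T)$ (to conclude $M\notin\Ss(T)$ before invoking Theorem~\ref{augment_dual}), so nothing in your argument breaks.
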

\begin{proof}

We first observe an additional property of the roommate instance $T$.

\begin{claim}
$\Ss(\widetilde{T})=\Ss(T)$.
\end{claim}
\noindent \underline{Proof of Claim}. Since $\widetilde{T}=E_S(T)$, we have $\Ss(\widetilde{T})\supseteq \Ss(T)$. It suffices to show that for any edge in $e\in T\setminus \widetilde{T}$, $e$ does not block any matching in $\Ss(\widetilde{T})$. This holds since edges added in step 1 consists of exactly table $\widetilde{T}$; while any edge added in Step 2 are antipodal edges wrt $\widetilde{T}$ therefore cannot block any matching contained in $\Ss(\widetilde{T})$.  $\hfill \diamond$

\smallskip

By Lemma \ref{lem:vNM=IC+ES}, $\Ss(\widetilde{T})$ is vNM stable in $T$ if and only if $\Ss(\widetilde{T})$ is internally closed and $\Ss(\widetilde{T})$ is externally stable. $\Ss(\widetilde{T})$ is externally stable if and only if there does not exist $M\in \M(T)\setminus \Ss(\widetilde{T})$ such that $M$ is not blocked by $\Ss(\widetilde{T})$. Given that $\Ss(\widetilde{T})=\Ss(T)$,  
$\Ss(\widetilde{T})$ is externally stable if and only if there does not exist $M\in \M(T)\setminus\Ss(\widetilde{T})$ such that $\{M\}\cup \Ss(\widetilde{T})$ is internally stable, i.e., if and only if ${\cal S}(\widetilde T)$ is internally closed. 
\end{proof}

Since deciding whether ${\cal S}(\widetilde T)$ is internally closed is co-NP-hard, we conclude the following.

\begin{theorem}
\texttt{CvNMS} is co-NP-hard.
\end{theorem}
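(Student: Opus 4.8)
\textbf{Proof plan for ``\texttt{CvNMS} is co-NP-hard''.}

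The plan is to reuse the entire 3-SAT reduction from Section~\ref{sec:NP_hard_deciding_internally_closed} verbatim: given a 3-SAT formula $\phi$, we build the same roommate instance $T$ and the same stable subtable $\tilde T = E_S(T)$, and we ask the \texttt{CvNMS} oracle whether ${\cal S}(\tilde T)$ is vNM-stable. Because the construction is a polynomial-time mapping and $\tilde T$ is a valid input to \texttt{CvNMS} (it is a stable table contained in $T$, by Lemma~\ref{lem:NPH-internally-stable}), it suffices to show that the oracle's answer determines the (un)satisfiability of $\phi$. The key is Lemma~\ref{cl:vNM iff IC}, which asserts that for this particular instance, ${\cal S}(\tilde T)$ is vNM-stable if and only if ${\cal S}(\tilde T)$ is internally closed.

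First I would record that Lemma~\ref{cl:vNM iff IC} is already proved in the excerpt. Its proof rests on the observation that ${\cal S}(\tilde T) = {\cal S}(T)$ (every edge of $T \setminus \tilde T$ is either exactly an edge of $\tilde T$, added in Step~1, or an antipodal edge wrt $\tilde T$ added in Step~2, and antipodal edges block no matching contained in ${\cal S}(\tilde T)$), together with Lemma~\ref{lem:vNM=IC+ES}. Indeed, external stability of ${\cal S}(\tilde T)$ says there is no $M \in {\cal M}(T) \setminus {\cal S}(\tilde T)$ unblocked by ${\cal S}(\tilde T)$; using ${\cal S}(\tilde T) = {\cal S}(T)$ this is equivalent to there being no matching $M$ with $\{M\} \cup {\cal S}(\tilde T)$ internally stable, which is precisely the statement that ${\cal S}(\tilde T)$ is internally closed. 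Combined with Lemma~\ref{lem:vNM=IC+ES} (vNM-stable $\iff$ internally closed $+$ externally stable), the ``extra'' external stability requirement is automatic here, so the two notions coincide on this instance.

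Next I would invoke Theorem~\ref{thm:coNp-hard-check-closedness}: deciding whether ${\cal S}(\tilde T)$ is internally closed is co-NP-hard, and the hardness proof in Section~\ref{sec:NP_hard_deciding_internally_closed} shows (via Lemmas~\ref{lem:if 3-SAT then IC} and~\ref{lem:if IC then 3-SAT}, together with Theorem~\ref{augment_dual}) that $\phi$ is satisfiable if and only if there is a stitched rotation wrt $\tilde T$, if and only if ${\cal S}(\tilde T)$ is \emph{not} internally closed. Chaining this with Lemma~\ref{cl:vNM iff IC}: $\phi$ is satisfiable $\iff$ ${\cal S}(\tilde T)$ is not internally closed $\iff$ ${\cal S}(\tilde T)$ is not vNM-stable. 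Hence an algorithm deciding \texttt{CvNMS} would, after negating its output, decide 3-SAT; equivalently, the complement of \texttt{CvNMS} is NP-hard, so \texttt{CvNMS} is co-NP-hard.

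The only place any care is needed — and it is already handled in the excerpt — is verifying that the reduction instance is legitimate for \texttt{CvNMS}, namely that $\tilde T$ is stable and $\tilde T \subseteq T$; both hold by Lemma~\ref{lem:NPH-internally-stable} and by construction. There is no genuine obstacle: the entire content of the theorem is the bridge Lemma~\ref{cl:vNM iff IC}, which exploits the fact that all non-stable edges of $T$ were deliberately introduced as antipodal edges precisely so that external stability of ${\cal S}(\tilde T) = {\cal S}(T)$ becomes vacuous relative to internal closedness. Thus the proof is essentially a two-line deduction from the already-established Theorem~\ref{thm:coNp-hard-check-closedness} and Lemma~\ref{cl:vNM iff IC}.
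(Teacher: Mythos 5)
Your proposal is correct and matches the paper's argument exactly: the paper likewise derives the theorem by combining the 3-SAT reduction instance $(T,\tilde T)$ from Section~\ref{sec:NP_hard_deciding_internally_closed} with Lemma~\ref{cl:vNM iff IC} (vNM stability coincides with internal closedness on this instance, because ${\cal S}(\tilde T)={\cal S}(T)$ makes external stability equivalent to internal closedness) and the chain ``$\phi$ satisfiable $\iff$ ${\cal S}(\tilde T)$ not internally closed.'' No differences worth noting.
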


\subsection{vNM stable sets may not exist in a roommate instance}\label{sec:vNM not exist}
Before concluding our discussion about vNM stable matchings, we emphasize that in more general cases, vNM stable matchings in the roommate case is not guaranteed to exist. We can see by a simple example involving only 3 agents. 

\begin{example}\label{exp:no_LP_no_SM}
The inclusionwise maximal matchings in the roommate instance $T$ below are: $M_1=\{x_1x_2\},M_2=\{x_2x_3\},M_3=\{x_3x_1\}$. One can check that $M_1$ blocks $M_3$; $M_2$ blocks $M_1$; $M_3$ blocks $M_2$. 
\begin{center}
 T=\quad \begin{tabular}{||c c||} 
 \hline
 Person & Preference list \\ [0.5ex] 
 \hline
 $x_1$ & $x_2$ $x_3$   \\ 
 \hline
 $x_2$ & $x_3$ $x_1$   \\ 
 \hline
 $x_3$ & $x_1$ $x_2$ \\  [1ex] 
 \hline
\end{tabular}
\end{center}

By symmetry, without loss of generality, suppose there exists vNM stable set $\M\supset \{M_1\}$. Then $M_3\not\in \M$ because it is blocked by $M_1$. Then $M_2\in \M$ because $M_2$ is only blocked by $M_3$ which is not in $\M$. Yet $M_2$ blocks $M_1$, contradicting the definition of vNM stability. Therefore this instance has no vNM stable sets of matchings.\end{example}

In the instance from Example~\ref{exp:no_LP_no_SM}, no stable matching exists. It is not hard to show more involved examples where vNM stable sets may or may not exist in both solvable and non-solvable roommate instances, or where multiple vNM stable sets exist. We next investigate the problem of finding a vNM stable set in a roommate instance, or concluding that none exists.

\subsection{\texttt{FvNMS} is co-NP-hard}

\begin{theorem}\label{thm:vNMS_co-NP-Hard}
\texttt{FvNMS} is co-NP-hard.
\end{theorem}
\begin{proof}
From the reduction in Section \ref{sec:NP_hard_deciding_internally_closed}, we know that the following problem is  co-NP-hard: 
\begin{quote}
$(**)$ given a roommate instance $T$ with $\Ss(T)\neq \emptyset$, decide if $\Ss(E_S(T))$ is vNM stable. 
\end{quote}
To show that \texttt{FvNMS} is co-NP-hard, we reduce $(**)$ on input $T$ to \texttt{FvNMS} on input $T$. First observe that any $T'\subseteq T$ such that $\Ss(T')$ is vNM stable satisfies $T'\supseteq E_S(T)$. Moreover, if $T'\supsetneq E_S(T)$ is such that $\Ss(T')$ is vNM stable, then $T$ is a no-instance for $(**)$, since vNM stable sets are internally closed, hence inclusionwise maximal.

Assume first that \texttt{FvNMS}$(T)=T'$. Then $T$ is a yes-instance for $(**)$ if and only if $\Ss(T')=\Ss(E_S(T))$. The latter can be decided by computing the set of stable edges in $T'$ (see, e.g.,~\cite{Irving}) and checking if it coincides with $E_S(T)$. If conversely \texttt{FvNMS}$(T)= \text{null}$, then $T$ is a no-instance for $(**)$.

Therefore, an efficient algorithm to solve \texttt{FvNMS} can be transformed into an efficient algorithm to solve $(**)$. The thesis follows. 
\end{proof}

\section{Conclusions}\label{sec:conclusions}

In this paper we introduce a generalization of vNM stable matchings called internally closed matchings, given by inclusionwise maximal internally stable sets of matchings. This new class of problem is structurally interesting: in the bipartite case, internally closed sets exhibit a lattice structure which corresponds to the set of stable matchings in an easily characterized sub-instance of the original problem; in the non-bipartite case, characterizing internally closed sets is co-NP-hard for which we show a reduction from 3-SAT. co-NP-hardness of internally closed sets also naturally leads to an co-NP-hardness proof for deciding vNM stability in the roommate case. Our structural results, in turn, extend the classical theory of rotation posets for both the marriage and roommate problems.

Several interesting future lines of research exist. Our analysis of internal closedness and vNM stability in the roommate case is restricted to solvable instances. On the one hand, our restriction shows that deciding internal closedness and vNM stability is already co-NP-hard even when considering a subdomain that seems most tractable. On the other hand, it would be interesting to investigate properties of internally closed / vNM stable sets of matchings of unsolvable roommate instances.  In addition, while we have shown that \texttt{CIC}, \texttt{CvNMS}, and \texttt{FvNMS} are co-NP-hard, further questions remain about their complexities.  It is clear that \texttt{CvNMS} belongs to co-NP, since a ``no'' certificate is given by a matching $M \in \M(T)\setminus \Ss(\widetilde T)$ that is not blocked by $\widetilde T$. Whether \texttt{CIC} $\in$ co-NP remains open. The complexity of the problem of deciding whether a roommate instance has a vNM stable set is also open. Note that the latter problem is easier than \texttt{FvNMS}, since it only requires a yes/no answer, and no subtable as an output. 


\index{Bibliography@\emph{Bibliography}}%
\bibliographystyle{plain}
\bibliography{cite.bib}
\begin{appendices}
\section{Proof of Lemma \ref{symmetric_diff}}\label{appendix_sym_diff}
\begin{proof}
1. Because $M$ and $M'$ are matchings, each node in $G(M\triangle M')$ has degree at most 2, so connected components of $G(M\triangle M')$ can only be singletons or even cycles or paths. Suppose by contradiction that there is a path $P=x_0,x_1,...x_{k-1}$ ($k\leq 2$). Assume without loss of generality that $x_0x_1\in M$, then $x_0$ is unmatched in $M'$, therefore $x_0$ prefers $M$. Because $M$ and $M'$ do not block each other, $x_1$ must prefer $M'$ to $M$, so $M'(x_1)=x_2 >_{x_1} x_0$. Iterating through path $P$ we get $x_{i+1}>_{x_i} x_{i-1}$ for all $i\in [k-1]_0$ (subscript modulo $k$). Therefore for the last node $x_{k-1}$ we have $\emptyset >_{x_{k-1}} x_{k-2}$ otherwise $x_{k-2}x_{k-1}$ would be a blocking pair. However, $\emptyset <_x y$ for any $x\neq y$ such that $xy\in T$, so we get a contradiction. Therefore, $G(M\triangle M')$ contains only singletons or even cycles with size at least 4. It immediately follows that $M$ and $M'$ match the same set of nodes. 
    
2. From 1.\ we know that $M\cup M'$ contains only even cycles, singletons, or edges that are shared by $M$ and $M'$, because $M$ and $M'$ match the same set of nodes, both nodes in an irregular edge must be matched in both matchings. None of the common edges can be irregular. Therefore irregular edges can only be in even cycles in $G(M\triangle M')$. Assume by contradiction that $x_ix_{i+1}$ is an irregular edge in the even cycle $C=x_0,x_1,...,x_{2k-1}$, and both $x_i,x_{i+1}$ prefer $M$ to $M'$, then we must have $x_ix_{i+1}\in M'$, otherwise $x_ix_{i+1}\in M$ blocks $M'$. So $M(x_i)=x_{i-1}>_{x_i} x_{i+1}$ and $M(x_{i+1})=x_{i+2}>_{x_{i+1}} x_i$. By a similar argument as 1), in order for $x_{i+1}x_{i+2}\in M$ not to block $M'$, we must have $M'(x_{i+2})=x_{i+3}>_{x_{i+2}} x_{i+1}$ and through iteration, we have $x_{n+1}>_{x_n} x_{n-1}$ for any $n\in [2k-1]_0$ (subscript modulo 2k). Likewise, in order for $x_ix_{i-1}$ not to block $M'$, we must have $M'(x_{i-1})=x_{i-2}>_{x_{i-1}} x_i=M(x_{i-1})$, and through iteration, we have $x_{n-1}>_{x_n} x_{n+1}$ for any $n\in [2k-1]_0$ (subscript modulo 2k). This is a contradiction. Therefore there cannot exist any irregular edge.
\end{proof}

\end{appendices}

\end{document}